\theoremstyle{plain}  
\newtheorem{theorem}{Theorem}
\newtheorem{lemma}[theorem]{Lemma}
\newtheorem{corollary}[theorem]{Corollary}
\newtheorem{proposition}[theorem]{Proposition}
\theoremstyle{remark}
\newtheorem{remark}[theorem]{Remark}
\newcommand{\df}{\mathrm{d}}
\newcommand{\X}{\mathsf{X}}
\newcommand{\Y}{\mathsf{Y}}
\newcommand{\B}{\mathcal{B}}
\newcommand{\F}{\mathcal{F}}
\newcommand{\ind}{\mathbf{1}}
\newcommand\emm{\mathcal{m}}
\def\namedlabel#1#2{\begingroup
	#2%
	\def\@currentlabel{#2}%
	\phantomsection\label{#1}\endgroup
}
\begin{document}
	
	\begin{frontmatter}
		%%%%%%%%%%%%%%%%%%%%%%%%%%%%%%%%%%%%%%%%%%%%%%
		%%                                          %%
		%% Enter the title of your article here     %%
		%%                                          %%
		%%%%%%%%%%%%%%%%%%%%%%%%%%%%%%%%%%%%%%%%%%%%%%
		\title{Spectral gap bounds for reversible hybrid Gibbs chains}
		%\title{A sample article title with some additional note\thanksref{T1}}
		\runtitle{Hybrid Gibbs chains}
		%\thankstext{T1}{A sample of additional note to the title.}
		
		\begin{aug}
			%%%%%%%%%%%%%%%%%%%%%%%%%%%%%%%%%%%%%%%%%%%%%%%
			%% Only one address is permitted per author. %%
			%% Only division, organization and e-mail is %%
			%% included in the address.                  %%
			%% Additional information can be included in %%
			%% the Acknowledgments section if necessary. %%
			%% ORCID can be inserted by command:         %%
			%% \orcid{0000-0000-0000-0000}               %%
			%%%%%%%%%%%%%%%%%%%%%%%%%%%%%%%%%%%%%%%%%%%%%%%
			\author[A]{\fnms{Qian}~\snm{Qin}\ead[label=e1]{qqin@umn.edu}},
			\author[B]{\fnms{Nianqiao}~\snm{Ju}\ead[label=e2]{nianqiao@purdue.edu }}
			\and
			\author[C]{\fnms{Guanyang}~\snm{Wang}\ead[label=e3]{gw295@stat.rutgers.edu}}
			%%%%%%%%%%%%%%%%%%%%%%%%%%%%%%%%%%%%%%%%%%%%%%
			%% Addresses                                %%
			%%%%%%%%%%%%%%%%%%%%%%%%%%%%%%%%%%%%%%%%%%%%%%
			\address[A]{School of Statistics, University of Minnesota\printead[presep={,\ }]{e1}}
			
			\address[B]{Department of Statistics, Purdue University\printead[presep={,\ }]{e2}}
			
			\address[C]{Department of Statistics, Rutgers University\printead[presep={,\ }]{e3}}
		\end{aug}
		
		\begin{abstract}
			Hybrid Gibbs samplers represent a prominent class of approximated Gibbs algorithms that utilize Markov chains to approximate conditional distributions, with the Metropolis-within-Gibbs algorithm standing out as a well-known example. 
			Despite their widespread use in both statistical and non-statistical applications, little is known about their convergence properties.
			This article introduces novel methods for establishing bounds on the convergence rates of certain reversible hybrid Gibbs samplers. 
			In particular, we examine the convergence characteristics of hybrid random-scan Gibbs algorithms. 
			Our analysis reveals that the absolute spectral gap of a hybrid Gibbs chain can be bounded based on the absolute spectral gap of the exact Gibbs chain and the absolute spectral gaps of the Markov chains employed for conditional distribution approximations. 
			We also provide a convergence bound of similar flavors for hybrid data augmentation algorithms, extending existing works on the topic.
			The general bounds are applied to three examples: a random-scan Metropolis-within-Gibbs sampler, random-scan Gibbs samplers with block updates, and a hybrid slice sampler.
		\end{abstract}
		
		\begin{keyword}[class=MSC]
			\kwd[Primary ]{60J05}
			% \kwd{???}
			% \kwd[; secondary ]{???}
		\end{keyword}
		
		\begin{keyword}
			\kwd{convergence rate}
			\kwd{data augmentation}
			\kwd{Metropolis-within-Gibbs}
			\kwd{MCMC}
			% \kwd{proximal sampler}
			\kwd{slice sampler}
		\end{keyword}
		
	\end{frontmatter}
	%%%%%%%%%%%%%%%%%%%%%%%%%%%%%%%%%%%%%%%%%%%%%%
	%% Please use \tableofcontents for articles %%
	%% with 50 pages and more                   %%
	%%%%%%%%%%%%%%%%%%%%%%%%%%%%%%%%%%%%%%%%%%%%%%
	%\tableofcontents
	
	%%%%%%%%%%%%%%%%%%%%%%%%%%%%%%%%%%%%%%%%%%%%%%
	%%%% Main text entry area:

	\section{Introduction} \label{sec:intro}
	
	The Gibbs sampler \citep{geman1984stochastic,casella1992explaining} is an immensely popular Markov chain Monte Carlo (MCMC) algorithm.
	It can be used to sample from the joint distribution of multiple variables by iteratively updating each variable based on its conditional distribution while keeping the other variables fixed.
	In practice, the conditional distributions themselves may be difficult to sample from.
	Algorithms like the Metropolis-within-Gibbs algorithm \citep{metropolis1953equation,chib1995understanding,tierney1998note} circumvent this problem by approximating an intractable conditional distribution with a Markovian step that leaves the conditional distribution invariant.
	We refer to these algorithms as "hybrid" Gibbs algorithms, as opposed to "exact" Gibbs algorithms that make perfect draws from conditional distributions.
	Since their inception, hybrid Gibbs samplers have been exploited to solve many complex problems from various domains, including pharmacokinetics~\citep{gilks1995adaptive}, biomedical informatics~\citep{wang2023bayesian}, econometrics~\citep{Schamberger2017bayesian}, deep latent variable models~\citep{mattei2018leveraging}, and privacy-protected data analysis~\citep{ju2022data}.

	Despite the pervasive popularity and extensive application of these hybrid algorithms, significant gaps persist in our understanding of their convergence properties. 
	Building on the foundational works of \citet{roberts1997geometric} and \citet{andrieu2018uniform}, this article aims to bridge the gap by establishing a connection between the convergence rates of exact and hybrid Gibbs chains in some important scenarios.
	Intuitively, if the Markovian steps in a hybrid Gibbs sampler approximate the intractable conditional distributions well, then the convergence rate of the hybrid Gibbs sampler should be similar to that of the corresponding exact Gibbs sampler.
	Our analysis confirms this intuition and makes this notion precise with some quantitative convergence rate inequalities.
	In particular, we provide bounds on the absolute spectral gap of the hybrid algorithm in terms of that of the exact algorithm and the quality of the approximation.
	Bounds involving Dirichlet form and asymptotic variance are also provided.
	
	We focus on the hybrid random-scan Gibbs algorithm, but also touch on hybrid data augmentation (two-component Gibbs) algorithms.
	The Markovian approximation of a conditional distribution is assumed to be reversible.
	This is the case for Metropolis-within-Gibbs samplers.
	
	The efficiency of a reversible MCMC sampler is closely related to its spectral gap.
	For a Markov chain associated with a Markov transition kernel (Mtk)~$K$ that is reversible with respect to a distribution~$\omega$, its absolute spectral gap is defined to be $1 - \|K\|_{\omega}$, where $\|K\|_{\omega} \in [0,1]$ is the $L^2$ norm of~$K$ when~$K$ is viewed as a Markov operator.
	(See Section~\ref{sec:preliminaries} for the exact definitions.)
	The $L^2$ distance between the distribution of the chain's $t$th iteration %element 
	and its target~$\omega$ decreases at a rate of $\|K\|_{\omega}^t$ as~$t$ tends to infinity.
	Indeed, the norm $\|K\|_{\omega}$ can be regarded as the convergence rate of the chain, with a smaller rate indicating faster convergence \citep{roberts1997geometric}.
	The number of iterations it takes for the chain to reach within some $L^2$ distance of its stationary distribution is roughly proportional to $-1/\log \|K\|_{\omega}$, which is approximately $1/(1-\|K\|_{\omega})$ when $\|K\|_{\omega}$ is not too close to~0.
	%	Spectral gaps can thus be used to compare the efficiency of MCMC samplers.
	Our results give quantitative upper and lower bounds on $(1-\|K_2\|_{\omega})/(1-\|K_1\|_{\omega})$ when $K_1$ and $K_2$ are the Mtks of a Gibbs sampler and its hybrid version.

	In Section \ref{sec:examples}, we apply the spectral gap bounds, derived under general conditions, to three examples, illustrating their relevance and potential for various applications. Together, these examples effectively showcase how our framework enriches and diversifies the analysis of exact and hybrid Gibbs samplers across different scenarios.
	\begin{itemize}[leftmargin = *]
		\item 	Firstly, the methods herein can be efficiently integrated with existing techniques to establish new and explicit convergence bounds. 
		In Section  \ref{ssec:met}, we study random-scan Gibbs samplers with conditional distributions approximated using a random-walk Metropolis algorithm.
		We derive an explicit relationship between the spectral gaps of the exact and hybrid samplers when the conditional distributions are log-concave using recent studies on log-concave sampling \citep{andrieu2022explicit}. 
		This relation is applied to a Bayesian regression model with spike and slab priors.
		%	In Section \ref{ssec:pro},  we study a new type of proximal sampler \citep{lee2021structured}, which can be regarded as a data augmentation algorithm, or a deterministic-scan Gibbs algorithm with two components. 	We quantitatively bound its convergence rate when one of the conditional distributions involved is approximated via a random-walk Metropolis step.
		
		\item Secondly, our theory can be used to generalize existing frameworks for analyzing Gibbs algorithms. 
		In Section \ref{ssec:hie},  we apply our method to analyze	random-scan Gibbs samplers that update multiple coordinates at a time. In particular, we provide a quantitative relationship between two samplers that update, respectively,~$\ell$ and~$\ell'$ coordinates at a time, in terms of their spectral gaps. This extends the main result (Theorem 2) in \cite{qin2023spectral},	 which itself is a generalization of a breakthrough technique called "spectral independence"  recently developed in theoretical computer science for analyzing the convergence properties of Gibbs algorithms \citep{anari2021spectral, chen2021optimal, blanca2022mixing, feng2022rapid}.
		
		\item Lastly, our technique may improve existing studies of some well-known problems. 	
		In  Section \ref{ssec:sli}, we revisit a hybrid slice sampler previously studied by \citet{latuszynski2014convergence}.
		By viewing it as a hybrid Gibbs sampler with two components, we derive a convergence rate bound that is similar but slightly better than the one obtained in \cite{latuszynski2014convergence}.
	\end{itemize}

	We now review some relevant existing works and explain how our contribution relates to them.
	On finite state spaces, \citet{liu1996peskun} constructed a Metropolized random-scan Gibbs sampler and showed that it is asymptotically more efficient than the corresponding exact Gibbs sampler using Peskun's ordering \citep{peskun1973optimum}. 
	Analysis on general state spaces is more challenging.
	The convergence properties of hybrid Gibbs samplers were studied qualitatively by~\citet{roberts1997geometric}, \citet{roberts1998two}, \citet{fort2003geometric}, and \citet{qin2021convergence}. 
	Quantitative studies on the subject remain scarce.
	When the approximating Markov chains are uniformly ergodic with convergence rates bounded away from one, some quantitative convergence bounds are given in \cite{roberts1998two}, \cite{neath2009variable}, and \cite{jones2014convergence}.
	For hybrid random-scan Gibbs algorithms, we develop a spectral gap bound under the weaker assumption that the approximating chains are geometrically ergodic with convergence rates bounded away from unity.
	This is our main contribution.
	
	We also develop a new spectral gap bound for hybrid data augmentation algorithms.
	{ For these algorithms, \citet{andrieu2018uniform} gave results leading to a spectral gap bound that is similar to our bound for hybrid random-scan Gibbs algorithms in form.
		Building on their work, we provide an alternative form of bound for hybrid data augmentation algorithms.}
	
	The proofs of our results utilize the idea of Markov decomposition \citep{caracciolo1992two,madras2002markov}.
	Some of the linear algebraic techniques that we employ are adopted from \cite{andrieu2018uniform} and \cite{latuszynski2014convergence}.
	Following the initial release of our work, \citet{ascolani2024scalability} derived bounds for the $s$-conductance of a class of hybrid random-scan Gibbs algorithms, exhibiting some similarities to our spectral gap bounds.

	The rest of this article is organized as follows.
	An overview of our results is given in Section~\ref{sec:overview}.
	Section~\ref{sec:preliminaries} gives a brief survey on the $L^2$ theory of Markov chains, and contains the definitions of concepts like Markov operator and spectral gap.
	The main result for random-scan Gibbs samplers are formally stated in Section~\ref{sec:main}.
	Section~\ref{sec:da} contains results involving data augmentation samplers.
	In Section~\ref{sec:examples} we apply results in Sections \ref{sec:main} and \ref{sec:da} to three examples.
	Section~\ref{sec:discussion} contains a brief discussion on avenues for future research.

	The Appendix includes technical results and proofs.
	In a supplementary file, we derive a spectral gap bound for a type of hybrid proximal sampler using techniques from \cite{andrieu2018uniform}, rather than the methods from Sections \ref{sec:main} and \ref{sec:da}, which may be of interest to practitioners. 
	In the same file, we derive an alternative spectral gap bound for hybrid random-scan Gibbs algorithms using arguments from \cite{roberts1997geometric}, and compare it to the bound derived in Section \ref{sec:main}.
	While this alternative bound performs less favorably in high-dimensional settings, its derivation based on \cite{roberts1997geometric} may still offer valuable insights.

	\section{Overview} \label{sec:overview}

	In this section, we describe two hybrid Gibbs samplers of interest, and give an overview of our results.
	%The first hybrid sampler simulates a random-scan Gibbs chain that involves~$n$ possibly approximated conditional distributions.
	%The second is a data augmentation algorithm that simulates a marginal deterministic-scan Gibbs chain with two components.
	%In the second sampler, one of the two conditional distributions involved is approximated.

	\subsection{Gibbs and hybrid Gibbs samplers} \label{ssec:samplers}

	{
		Suppose that $(\X,\B)$ is the product of~$n$ measurable spaces, say $((\X_i,\B_i))_{i=1}^n$.
		Assume that each $\X_i$ is Polish, and $\B_i$ is its Borel algebra.
		Let $\Pi$ be a probability measure on $(\X,\B)$.
		For $i \in \{1,\dots,n\}$ and $(x_1, \dots, x_n) \in \X$, let $x_{-i} = (x_1, \dots, x_{i-1}, x_{i+1}, \dots, x_n)$, and let $\X_{-i} = \{x_{-i}: \, x \in \X\}$.
		If $X \sim \Pi$, denote by $M_i$ the marginal distribution of $X_i$, by $M_{-i}(\cdot)$ the marginal distribution of $X_{-i}$, and by $\Pi_{i, y}(\cdot)$ the conditional distribution of $X_i$ given $X_{-i} = y \in \X_{-i}$.
		Then we have the disintegration
		\[
		\Pi(\df x) = M_{-i}(\df x_{-i}) \, \Pi_{i, x_{-i}}(\df x_i).
		\]
	}

	A random-scan Gibbs chain targeting~$\Pi$ is a Markov chain with transition kernel $T: \X \times \B \to [0,1]$ given by
	\begin{equation} \label{eq:T}
		T(x, \df x') = \sum_{i=1}^n p_i \, \Pi_{i, x_{-i}}(\df x_i' ) \, \delta_{x_{-i}}(\df x_{-i}'),
	\end{equation}
	where $p_1, \dots, p_n$ are non-negative constants such that $\sum_{i=1}^n p_i = 1$, and $\delta_y(\cdot)$ denotes the point mass (Dirac measure) at $y \in \X_{-i}$.
	To simulate the chain, given the current state~$x$, one randomly draws an index~$i$ from $\{1,\dots,n\}$ with probability $p_i$, and updates $x_i$ through $\Pi_{i,x_{-i}}(\cdot)$.
	It is well-known that $T$ is reversible with respect to~$\Pi$.

	In practice, it may be difficult to sample exactly from $\Pi_{i,x_{-i}}(\cdot)$, preventing one from simulating the above Gibbs chain.
	Suppose that, instead, for $\Pi$-a.e. $x \in \X$, it is possible to sample from $Q_{i,x_{-i}}(x_i, \cdot)$, where, for $y \in \X_{-i}$, $Q_{i, y}: \X_i \times \B_i \to [0,1]$ is an Mtk that is reversible with respect to $\Pi_{i,y}$.
	One can then define a hybrid random-scan Gibbs chain with the Mtk
	\begin{equation} \label{eq:That}
		\hat{T}(x, \df x')  = \sum_{i=1}^n p_i \, Q_{i, x_{-i}}(x_i, \df x_i') \, \delta_{x_{-i}}(\df x_{-i}')
	\end{equation}
	Typically, $Q_{i,y}$ is associated with a Metropolis-Hastings sampler targeting $\Pi_{i,y}$, and the resultant hybrid Gibbs sampler is called a Metropolis-within-Gibbs sampler.
	To ensure that $\hat{T}$ is well-defined, we impose the following regularity condition:
	\begin{itemize}
		\item [\namedlabel{A1}{(A)}]
		For $i = 1, \dots,n$, and $A \in \B_i$, 
		the function $x \mapsto Q_{i,x_{-i}}(x_i, A)$ is measurable.
	\end{itemize}
	It is not difficult to show that $\hat{T}$ is reversible with respect to~$\Pi$.

	Another type of Gibbs chain that we shall investigate is a marginal deterministic-scan Gibbs chain with two components, which defines a data augmentation algorithm \citep{tanner1987calculation,van2001art,hobe:2011}.
	To be specific, suppose that $n = 2$, and let $S: \X_1 \times \B_1 \to [0,1]$ be an Mtk of the form
	\[
	S(y, \df y') = \int_{\X_2} \Pi_{2,y}(\df z) \, \Pi_{1, z}(\df y').
	\]
	To simulate this chain, given the current state $y$, one first draws $z$ from the conditional distribution $\Pi_{2,y}(\cdot)$, then draws the next state $y'$ from the other conditional distribution $\Pi_{1, z}(\cdot)$.
	It is well-known that $S$ is reversible with respect to $M_1$.

	Suppose that one can efficiently sample from $\Pi_{2,y}( \cdot)$ for $y \in \X_1$, but not from $\Pi_{1, z}( \cdot)$ for at least some $z \in \X_2$.
	However, assume that, given $z \in \X_2$, one can sample from $Q_{1,z}(y, \cdot)$ for $y \in \X_1$, where $Q_{1,z}$ is an Mtk that is reversible with respect to $\Pi_{1,z}$.
	Then one can simulate the hybrid data augmentation chain with Mtk
	\[
	\hat{S}(y, \df y') = \int_{\X_2} \Pi_{2,y}(\df z) \, Q_{1, z}(y, \df y').
	\]
	In this algorithm,  given the current state $y$, one first draws~$z$ from the conditional distribution $\Pi_{2,y}( \cdot)$, then updates~$y$ to~$y'$ by simulating one step of the chain associated with the Mtk $Q_{1,z}$.
	It can be shown that $\hat{S}$ is reversible with respect to $M_1$ \citep[see, e.g.,][Section 2.4]{jones2014convergence}.

	\subsection{Summary of results} \label{ssec:loose}
	
	We now give an overview of our results.
	The exact statements are given in Sections~\ref{sec:main} and~\ref{sec:da}, after a brief survey on the $L^2$ theory of Markov chains in Section~\ref{sec:preliminaries}.
	For now, recall that the $L^2$ norm of a Markov operator is in $[0,1]$.
	When the associated chain is reversible, the norm can be viewed as its convergence rate, with a smaller rate indicating faster convergence to the stationary distribution.
	It is also common to quantify the convergence speed of chains through the absolute spectral gap, which is one minus the operator norm (see Section~\ref{sec:preliminaries}).
	The norms of $T$, $\hat{T}$, $S$, $\hat{S}$, and $Q_{i,y}$ are denoted by $\|T\|_{\Pi}$, $\|\hat{T}\|_{\Pi}$, $\|S\|_{M_1}$, $\|\hat{S}\|_{M_1}$, and $\|Q_{i,y}\|_{\Pi_{i,y}}$, respectively.

	For the random-scan Gibbs sampler, our spectral gap bound loosely goes like this:
	\begin{equation} \label{ine:Tbound-loose}
		\left(1 - \sup_{i,y} \|Q_{i,y}\|_{\Pi_{i,y}} \right) (1 - \|T\|_{\Pi}) \leq 1 - \|\hat{T}\|_{\Pi} \leq \left(1 + \sup_{i,y} \|Q_{i,y}\|_{\Pi_{i,y}} \right) (1 - \|T\|_{\Pi}),
	\end{equation}
	where the superemum is taken over $\bigcup_{i=1}^n \{i\} \times \X_{-i}$.
	If the Markov operator $Q_{i,y}$ is positive semi-definite for each $i$ and $y$, then the right-hand-side can be tightened to $1 - \|T\|_{\Pi}$.
	
	The bounds in \eqref{ine:Tbound-loose} imply that if the convergence rates of all chains associated with $Q_{i,y}$ are uniformly bounded away from~1 when~$i$ and $y$ vary, then the ratio of the spectral gaps of~$\hat{T}$ and~$T$ is in $(0,2)$.
	In this case, exact Gibbs and hybrid Gibbs have similar/comparable convergence rates.
	
	The lower bound in \eqref{ine:Tbound-loose} has the form
	\begin{equation} \label{ine:general}
		\left( \inf_i \text{Spectral gap of chain } C_i \right) \times \text{Spectral gap of chain } A \leq \text{Spectral gap of chain } B.
	\end{equation}
	Interestingly, bounds of this form have appeared in various other settings that are quite distinct from the one at hand \citep{andrieu2018uniform,madras1999importance,madras2002markov,carlen2003determination,qin2023geometric}.
	The similarity might be attributed to the presence of the idea of Markov chain decomposition \cite{caracciolo1992two} in their derivations.
	
	Previously, the only quantitative convergence bound for general hybrid random-scan Gibbs samplers that we know of is from \cite{roberts1998two}.
	\citet{roberts1998two} established a relation that bears some similarity to the first inequality in \eqref{ine:Tbound-loose}.
	However, their proof relied on the strong assumptions that the chain associated with $Q_{i,y}$ is uniformly ergodic for all~$i$ and $y$, and the chain associated with~$T$ is uniformly ergodic as well.

	{
		For hybrid data augmentation chains, the following convergence bound, which is also of the form \eqref{ine:general}, was derived by \citet{andrieu2018uniform}.
		\begin{equation} \label{ine:Sbound-loose}
			\left(1 - \sup_{z \in \X_2} \|Q_{1,z}\|_{\Pi_{1,z}} \right) (1 - \|S\|_{M_1}) \leq 1 - \|\hat{S}\|_{M_1} \leq \left(1 + \sup_{z \in \X_2} \|Q_{1,z}\|_{\Pi_{1,z}} \right) (1 - \|S\|_{M_1}).
		\end{equation}
		If the Markov operator $Q_{1,z}$ is positive semi-definite for each $z$, then the right-hand-side can be tightened to $1 - \|S\|_{M_1}$.
		In fact, \citet{andrieu2018uniform} derived a more general form of the bound.
		The general form involves an intractable quantity, and bounding the intractable quantity in an obvious manner yields \eqref{ine:Sbound-loose}.
	}

	For the lower bound on $1 - \|\hat{S}\|_{M_1}$ in \eqref{ine:Sbound-loose} to be non-trivial (strictly greater than~0), it is necessary that $\sup_{z} \|Q_{1,z}\|_{\Pi_{1,z}} < 1$.
	Our contribution regarding hybrid data augmentation algorithms relaxes this requirement.
	By combining arguments from \cite{latuszynski2014convergence} and \cite{andrieu2018uniform}, we derive the following bound, which holds any positive even integer~$t$:
	\begin{equation} \label{ine:Sbound-loose-2}
		t (1 - \|\hat{S}\|_{M_1}) \geq 1 - \|\hat{S}\|_{M_1}^t \geq 1 - \|S\|_{M_1} - \sup_{y \in \X_1} \int_{\X_2} \|Q_{1,z}\|_{\Pi_{1,z}}^t \Pi_{2,y}( \df z).
	\end{equation}
	If $Q_{1,z}$ is positive semi-definite for each $z$, then \eqref{ine:Sbound-loose-2} holds for odd values of $t$ as well.
	%	The first inequality in \eqref{ine:Sbound-loose-2} is a version of Bernoulli's inequality.
	When $\|S\|_{M_1} < 1$, the second inequality is nontrivial for large values of $t$ when
	\[
	\lim_{t \to \infty} \sup_{y \in \X_1} \int_{\X_2} \|Q_{1,z}\|_{\Pi_{1,z}}^t \Pi_{2,y}( \df z) = 0.
	\]
	This requirement is evidently weaker than $\sup_{z \in \X_2} \|Q_{1,z}\|_{\Pi_{1,z}} < 1$.

	%	The lower bound on $1 - \|\hat{S}\|_{M_1}$ in~\eqref{ine:Sbound-loose-2} is a generalization of a result in \cite{latuszynski2014convergence}, who studied a hybrid slice sampler.
	%	In Section~\ref{ssec:sli}, we explain that this hybrid slice sampler is in fact a special case of the hybrid data augmentation sampler.

	\begin{remark}
		For the lower bound in \eqref{ine:Tbound-loose} to be nontrivial, it is necessary that $\sup_{i,y} \|Q_{i,y}\|_{\Pi_{i,y}} < 1$.
		We were not able to obtain an analogue of~\eqref{ine:Sbound-loose-2} for the random-scan Gibbs chain that is capable of alleviating this restriction.
	\end{remark}

	\section{Preliminaries} \label{sec:preliminaries}
	
	This section reviews the $L^2$ theory for Markov chains.
	
	Let $(\Y, \F, \omega)$ be a probability space.
	The linear space $L^2(\omega)$ consists of functions  $f: \Y \to \mathbb{R}$ such that $\int_{\Y} f(y)^2 \, \omega(\df y) < \infty$.
	Two functions in $L^2(\omega)$ are considered identical if they are equal almost everywhere.
	For $f, g \in L^2(\omega)$ and $c \in \mathbb{R}$, $(f+g)(y) = f(y) + g(y)$, and $(cf)(y) = cf(y)$.
	The inner product of~$f$ and~$g$ is $\langle f, g \rangle_{\omega} = \int_{\Y} f(y) g(y) \, \omega(\df y)$.
	The norm of~$f$ is $\|f\|_{\omega} = \sqrt{\langle f, f \rangle_{\omega}}$.
	Let $L_0^2(\omega)$ be the subspace of $L^2(\omega)$ consisting of functions~$f$ such that $\omega f := \int_{\Y} f(y) \, \omega(\df y) = 0$.
	{ It is well known that $(L^2(\omega), \langle \cdot, \cdot, \rangle_{\omega} )$ and $(L_0^2(\omega), \langle \cdot, \cdot, \rangle_{\omega} )$ are real Hilbert spaces \citep[see, e.g.,][Theorem 13.15]{bruckner2008}.}
	
	A probability measure $\mu: \F \to [0,1]$ is said to be in $L_*^2(\omega)$ if $\df \mu/ \df \omega$ exists and is in $L^2(\omega)$.
	The $L^2$ distance between $\mu, \nu \in L_*^2(\omega)$ is
	\[
	\|\mu - \nu\|_{\omega} := \sup_{f \in L_0^2(\omega)} (\mu f - \nu f) = \left\| \frac{\df \mu}{\df \omega} - \frac{\df \nu}{\df \omega} \right\|_{\omega},
	\]
	where $\mu f$ and $\nu f$ are defined analogously to $\omega f$.
	
	Let $K: \Y \times \F \to [0,1]$ be an Mtk, i.e., $K(x,\cdot)$ is a probability measure for $x \in \X$ and $K(\cdot, A)$ is a measurable function for $A \in \F$.
	Assume that $\omega(\cdot)$ is stationary for $K(\cdot,\cdot)$, i.e.,
	\[
	\omega K(\cdot) := \int_{\Y} \omega(\df y) \, K(y, \cdot) = \omega(\cdot).
	\]
	For any positive integer~$t$, let $K^t$ be the $t$-step Mtk, so that $K^1 = K$ and $K^{t+1}(y, \cdot) = \int_{\Y} K(y,\df z) K^t(z, \cdot)$.
	It is well-known that $K$ can be seen as a linear operator on $L_0^2(\omega)$:
	for $f \in L_0^2(\omega)$,
	\[
	K f(\cdot) = \int_{\Y} K(\cdot, \df y) f(y).
	\]
	We refer to~$K$ as a Markov operator.
	The operator corresponding to the $t$-step Mtk $K^t(\cdot,\cdot)$ is precisely the operator $K^t = \prod_{i=1}^t K$.
	By Jensen's inequality, the norm of~$K$ satisfies
	\[
	\|K\|_{\omega} := \sup_{f \in L_0^2(\omega) \setminus \{0\}} \frac{\|Kf\|_{\omega}}{\|f\|_{\omega}} \leq 1.
	\]
	The operator~$K$ is self-adjoint, i.e., $\langle K f, g \rangle_{\omega} = \langle f, K g \rangle_{\omega}$ for $f, g \in L_0^2(\omega)$,
	if and only if the corresponding chain is reversible with respect to~$\omega$.
	We say~$K$ is positive semi-definite if it is self-adjoint, and $\langle f, Kf \rangle_{\omega} \geq 0$ for $f \in L_0^2(\omega)$.
	{ One can show that~$T$ and~$S$ from Section~\ref{sec:overview} are both positive semi-definite linear operators \citep[see, e.g.,][]{rudolf2013positivity, liu1995covariance}.}
	
	Assume for the rest of this section that $K$ is self-adjoint.
	For $f \in L^2(\omega)$,
	\[
	\mathcal{E}_{K}(f)  = \|f - \omega f\|_{\omega}^2 - \langle f - \omega f, K (f - \omega f) \rangle_{\omega} = \frac{1}{2} \int_{\Y^2} \omega(\df y) K(y, \df y') [f(y) - f(y')]^2
	\] 
	is the Dirichlet form of $f$ with respect to $K$.
	By the Cauchy-Schwarz inequality and the fact that $\|K\|_{\omega} \leq 1$, for $f \in L_0^2(\omega) \setminus \{0\}$, $\mathcal{E}_K(f) / \|f\|_{\omega}^2 \in [0,2]$, and, if $K$ is positive semi-definite, $\mathcal{E}_K(f) / \|f\|_{\omega}^2 \in [0,1]$.
	{
		The absolute spectral gap of~$K$ is defined to be one minus $K$'s spectral radius.
		Since $K$ is self-adjoint, this equates to $1 - \|K\|_{\omega}$ \citep[see, e.g.,][\S30, Corollary 8.1]{helmberg2014introduction}.
		By well-known properties of self-adjoint operators \citep[see, e.g.,][\S14, Corollary 5.1]{helmberg2014introduction},  
		\begin{equation} \label{eq:norm-def}
			\begin{aligned}
				\|K\|_{\omega} &= \sup_{f \in L_0^2(\omega) \setminus \{0\}} \left| \frac{\langle f, Kf \rangle_{\omega}}{\|f\|_{\omega}^2} \right|, \\
				1 - \|K\|_{\omega} &= \min \left\{ 2 - \sup_{f \in L_0^2(\omega) \setminus \{0\}} \frac{\mathcal{E}_K(f)}{\|f\|_{\omega}^2}, \inf_{f \in L_0^2(\omega) \setminus \{0\}} \frac{\mathcal{E}_{K}(f)}{\|f\|_{\omega}^2} \right\}.
			\end{aligned}
		\end{equation}
	}

	One can find $C: L_*^2(\omega) \to [0,\infty)$ such that
	\[
	\|\mu K^t - \omega\|_{\omega} \leq C(\mu) \rho^t
	\]
	for every $\mu \in L_*^2(\omega)$ and $t \geq 1$ if and only if $\rho \geq \|K\|_{\omega}$,
	where $\mu K^t (\cdot) = \int_{\Y} \mu(\df y) \, K^t(y, \cdot)$ \citep{roberts1997geometric}.
	In this sense, $\|K\|_{\omega}$ defines the convergence rate of the corresponding reversible chain.
	
	{
		When a chain $(X(t))_{t=0}^{\infty}$ evolves according to $K$ and $X(0) \sim \omega$, the asymptotic variance of a function $f \in L^2(\omega)$ is $\mbox{var}_K(f) := \lim_{t \to \infty} t^{-1/2} \mbox{var}[ \sum_{t=1}^t f(X(t)) ]$.
		Note that $\mbox{var}_K(f)  = \mbox{var}_K(f - \omega f)$.
		When $\|K\|_{\omega} < 1$, i.e., the Markov chain associated with~$K$ is geometrically convergent in the $L^2$ distance \citep[Theorem 2.1]{roberts1997geometric}, for $f \in L_0^2(\omega)$, 
		\[
		\mbox{var}_K(f) = 2 \langle f, (I-K)^{-1} f \rangle_{\omega} - \|f\|_{\omega}^2,
		\]
		where $I$ is the identity operator on $L_0^2(\omega)$ \citep[see, e.g.,][Proposition A.1]{caracciolo1990nonlocal}.
		Note that, in the above equation, $\|f\|_{\omega}^2$ is the variance of $f(X)$ if $X \sim \omega$.
	}

	\section{Random-scan Gibbs chain} \label{sec:main}
	
	In this section, we derive our main result concerning the relationship between exact and hybrid random-scan Gibbs samplers.
	
	Recall that $T$ as defined in \eqref{eq:T} is the Mtk of the random-scan Gibbs chain, which is reversible with respect to~$\Pi$.
	Thus,~$T$ can be regarded as a self-adjoint operator on $L_0^2(\Pi)$.
	In fact, it is well-known that $T$ is a mixture of orthogonal projections \cite{greenwood1998information} {(an operator $P$ on $L_0^2(\Pi)$ is an orthogonal projection if $P$ is self-adjoint and idempotent in the sense that $P^2 = P$)}, and thus positive semi-definite.
	The convergence rate of the Gibbs chain is given by $\|T\|_{\Pi}$.
	For $y \in \X_{-i}$, the Mtk $Q_{i,y}$ is reversible with respect to $\Pi_{i,y}$.
	The convergence rate of the corresponding chain is given by $\|Q_{i,y}\|_{\Pi_{i, y}}$.
	The hybrid Gibbs chain of interest is associated with the Mtk $\hat{T}$ defined in \eqref{eq:That}.
	Recall that this chain is reversible with respect to~$\Pi$, so its convergence rate is given by $\|\hat{T}\|_{\Pi}$.
	
	Our goal is to bound $\|\hat{T}\|_{\Pi}$ in terms of $\|T\|_{\Pi}$ and an upper bound on $\|Q_{i,y}\|_{\Pi_{i, y}}$.
	We will rely on the following key technical result.

	\begin{theorem} \label{thm:random}
		Suppose that there exist constants $c_1, c_2 \in [0,2]$ such that
		\begin{equation} \label{ine:Qiyi-bound}
			c_1 \leq \frac{\mathcal{E}_{Q_{i,y}}(g)}{\|g\|_{\Pi_{i,y}}^2} \leq c_2
		\end{equation}
		for $i=1,\dots,n$, $M_{-i}$-a.e. $y \in \X_{-i}$, and $g \in L_0^2(\Pi_{i,y}) \setminus \{0\}$.
		Then, for $f \in L_0^2(\Pi)$,
		\begin{equation} \label{ine:That-Dir}
			c_1 \, \mathcal{E}_{T}(f) \leq \mathcal{E}_{\hat{T}}(f) \leq c_2 \, \mathcal{E}_{T}(f).
		\end{equation}
	\end{theorem}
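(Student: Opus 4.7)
The plan is to prove the sandwich \eqref{ine:That-Dir} by comparing the Dirichlet forms of $T$ and $\hat{T}$ pointwise in the ``frozen'' coordinates, exploiting the fact that both kernels update only one randomly chosen coordinate per step. For $f \in L_0^2(\Pi)$, $i \in \{1,\dots,n\}$, and $y \in \X_{-i}$, I will work with the slice $f_{i,y}: \X_i \to \mathbb{R}$ defined by $f_{i,y}(z) = f(x)$ with $x_i = z$ and $x_{-i} = y$. Fubini applied to the disintegration $\Pi(\df x) = M_{-i}(\df x_{-i}) \, \Pi_{i, x_{-i}}(\df x_i)$ ensures $f_{i,y} \in L^2(\Pi_{i,y})$ for $M_{-i}$-a.e.\ $y$, while assumption \ref{A1} together with the symmetric representation of $\mathcal{E}_{Q_{i,y}}$ secures the measurability of $y \mapsto \mathcal{E}_{Q_{i,y}}(f_{i,y})$.

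The next step is to expand both Dirichlet forms by plugging \eqref{eq:T} and \eqref{eq:That} into $\mathcal{E}_K(f) = \tfrac{1}{2} \int \omega(\df y)\, K(y, \df y')\, [f(y) - f(y')]^2$. Because the full-resampling kernel $(z, \df z') \mapsto \mu(\df z')$ has Dirichlet form equal to variance, namely $\tfrac{1}{2} \iint \mu(\df z)\mu(\df z')[g(z)-g(z')]^2 = \|g - \mu g\|_{\mu}^2$, the disintegration of $\Pi$ yields
\begin{gather*}
  \mathcal{E}_T(f) = \sum_{i=1}^n p_i \int_{\X_{-i}} \|f_{i,y} - \Pi_{i,y} f_{i,y}\|_{\Pi_{i,y}}^2 \, M_{-i}(\df y), \\
  \mathcal{E}_{\hat{T}}(f) = \sum_{i=1}^n p_i \int_{\X_{-i}} \mathcal{E}_{Q_{i,y}}(f_{i,y}) \, M_{-i}(\df y).
\end{gather*}
Thus \eqref{ine:That-Dir} reduces to the pointwise-in-$y$ comparison $c_1 \|f_{i,y} - \Pi_{i,y} f_{i,y}\|_{\Pi_{i,y}}^2 \le \mathcal{E}_{Q_{i,y}}(f_{i,y}) \le c_2 \|f_{i,y} - \Pi_{i,y} f_{i,y}\|_{\Pi_{i,y}}^2$ for $M_{-i}$-a.e.\ $y$.

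To obtain this pointwise bound from \eqref{ine:Qiyi-bound}, I will use that the symmetric representation of $\mathcal{E}_{Q_{i,y}}$ depends only on differences and is therefore invariant under subtracting a constant: for $g := f_{i,y} - \Pi_{i,y} f_{i,y} \in L_0^2(\Pi_{i,y})$, one has $\mathcal{E}_{Q_{i,y}}(f_{i,y}) = \mathcal{E}_{Q_{i,y}}(g)$ and $\|g\|_{\Pi_{i,y}}^2 = \|f_{i,y} - \Pi_{i,y} f_{i,y}\|_{\Pi_{i,y}}^2$, so \eqref{ine:Qiyi-bound} applied to $g$ delivers the bound when $g \neq 0$; the harmless case $g = 0$ makes both sides of the inequality vanish simultaneously. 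Integrating against $p_i M_{-i}(\df y)$ and summing in $i$ then produces \eqref{ine:That-Dir}. I do not anticipate any genuine analytic difficulty here; the hard part will be purely bookkeeping, namely verifying measurability under \ref{A1}, handling the disintegration carefully, and dispatching the degenerate case $g = 0$. Conceptually, the argument is a clean Markov-decomposition calculation in which $T$ and $\hat{T}$ are written as weighted averages of one-dimensional kernels whose building blocks differ only by the replacement of $\Pi_{i,y}$-resampling with a single $Q_{i,y}$ step.
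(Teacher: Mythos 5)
Your proposal is correct and follows essentially the same route as the paper's proof: decompose both Dirichlet forms over the coordinate index and the frozen coordinates via the disintegration of $\Pi$, identify $\mathcal{E}_T(f)$ with the integrated conditional variances and $\mathcal{E}_{\hat{T}}(f)$ with the integrated $\mathcal{E}_{Q_{i,y}}(f_{i,y})$, then apply \eqref{ine:Qiyi-bound} pointwise to the centered slice $f_{i,y}-\Pi_{i,y}f_{i,y}$. The only differences are cosmetic (you explicitly dispatch the degenerate case $g=0$ and flag the measurability bookkeeping, which the paper leaves implicit).
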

	
	{
		
		The proof of Theorem \ref{thm:random} builds on the idea of decomposing Dirichlet forms, which can be traced back to \cite{caracciolo1992two} and \cite{madras1999importance}.
		Similar techniques were employed in \cite{andrieu2018uniform} and \cite{qin2023spectral} to study hybrid data augmentation algorithms and block Gibbs samplers.
		
		\begin{proof}

			Let $f \in L_0^2(\Pi)$.
			For $i \in \{1,\dots,n\}$ and $y \in \X_{-i}$, let $f_{i,y}: \X_i \to \mathbb{R}$ be such that $f_{i,y}(x_i) = f(x)$ if $x_{-i} = y$.
			Then $f(x) = f_{i,x_{-i}}(x_i)$ for $x \in \X$.
			Since 
			\[
			\|f\|_{\Pi}^2 = \int_{\X_{-i}} \int_{\X_i} f_{i, x_{-i}}(x_i)^2 \, \Pi_{i,x_{-i}}(\df x_i) \, M_{-i}(\df x_{-i}) < \infty,
			\]
			for $M_{-i}$-a.e. $y \in \X_{-i}$,
			the function $f_{i,y}$ can be regarded as an element of $L^2(\Pi_{i,y})$, and $f_{i,y} - \Pi_{i,y} f_{i,y}$ can be seen as an element of $L_0^2(\Pi_{i,y})$.
			
			By the definition of Dirichlet forms, it is easy to derive
			\begin{equation} \label{eq:Thatff-1}
				\begin{aligned}
					\mathcal{E}_{\hat{T}}(f) =& \frac{1}{2} \sum_{i=1}^n p_i \int_{\X_{-i}} M_{-i}(\df x_{-i}) \int_{\X_i^2} \Pi_{i,x_{-i}}(\df x_i) \, Q_{i,x_{-i}}(x_i, \df x_i') \,  [f_{i,x_{-i}}(x_i) - f_{i,x_{-i}}(x_i')]^2 \\
					=& \sum_{i=1}^n p_i \int_{\X_{-i}} M_{-i}(\df x_{-i}) \, \mathcal{E}_{Q_{i,x_{-i}}}(f_{i,x_{-i}}) \\
					=& \sum_{i=1}^n p_i \int_{\X_{-i}} M_{-i}(\df x_{-i}) \, \mathcal{E}_{Q_{i,x_{-i}}}( f_{i,x_{-i}} - \Pi_{i,x_{-i}} f_{i,x_{-i}} ).
				\end{aligned}
			\end{equation}
			On the other hand, 
			\begin{equation} \nonumber
				\begin{aligned}
					&\int_{\X_{-i}} M_{-i}(\df x_{-i}) \, \| f_{i,x_{-i}} - \Pi_{i,x_{-i}} f_{i,x_{-i}} \|_{\Pi_{i,x_{-i}}}^2  \\
					=& \int_{\X_{-i}} M_{-i}(\df x_{-i}) \left[ \| f_{i,x_{-i}}  \|_{\Pi_{i,x_{-i}}}^2 - (\Pi_{i,x_{-i}} f_{i,x_{-i}})^2 \right]  \\
					=& \int_{\X_{-i}} M_{-i}(\df x_{-i}) \int_{\X_i} \Pi_{i,x_{-i}}(\df x_i) \, f(x)^2  - \\
					& \int_{\X_{-i}} M_{-i}(\df x_{-i}) \int_{\X_i} \Pi_{i,x_{-i}}(\df x_i) \, f_{i,x_{-i}}(x_i) \, \int_{\X_i} \Pi_{i,x_{-i}}(\df x_i') \, f_{i,x_{-i}}(x_i') \\
					=& \|f\|_{\Pi}^2 - \int_{\X} \Pi(\df x) \, f(x) \int_{\X} \Pi_{i,x_{-i}}(\df x_i') \, \delta_{x_{-i}}(\df x_{-i}') \, f(x').
				\end{aligned}
			\end{equation}
			This implies that
			\begin{equation} \label{eq:Dirichlet-T}
				\mathcal{E}_{T}(f) = \sum_{i=1}^n p_i \int_{\X_{-i}} M_{-i}(\df x_{-i})  \, \| f_{i,x_{-i}} - \Pi_{i,x_{-i}} f_{i,x_{-i}} \|_{\Pi_{i,x_{-i}}}^2.
			\end{equation}
			Comparing \eqref{eq:Thatff-1} and \eqref{eq:Dirichlet-T} shows that we can establish a relationship between $\mathcal{E}_{\hat{T}}(f)$ and $\mathcal{E}_{T}(f)$ by comparing $\mathcal{E}_{Q_{i,x_{-i}}}( f_{i,x_{-i}} - \Pi_{i,x_{-i}} f_{i,x_{-i}})$ and $\| f_{i,x_{-i}} - \Pi_{i,x_{-i}} f_{i,x_{-i}} \|_{\Pi_{i,x_{-i}}}^2$.
			
			By \eqref{ine:Qiyi-bound},
			\begin{equation} \nonumber
				\begin{aligned}
					c_1 \, \| f_{i,x_{-i}} - \Pi_{i,x_{-i}} f_{i,x_{-i}} \|_{\Pi_{i,x_{-i}}}^2 &\leq \mathcal{E}_{Q_{i,x_{-i}}}( f_{i,x_{-i}} - \Pi_{i,x_{-i}} f_{i,x_{-i}} ) \\
					& \leq c_2 \, \| f_{i,x_{-i}} - \Pi_{i,x_{-i}} f_{i,x_{-i}} \|_{\Pi_{i,x_{-i}}}^2 .
				\end{aligned}
			\end{equation}
			Combining this with \eqref{eq:Thatff-1} and \eqref{eq:Dirichlet-T} gives the desired bound.
			
		\end{proof}
	}
	
	\begin{remark} \label{rem:exact}
		In hybrid Gibbs samplers, it is often the case that some conditional distributions are tractable, allowing for exact sampling from these distributions. 
		Mathematically, this means that, for some $i \in \{1,\dots,n\}$ and $y \in \X_{-i}$, one may take $Q_{i,y}(z, \cdot) = \Pi_{i,y}(\cdot)$ for each $z \in \X_i$.
		In this case, $\|Q_{i,y}\|_{\Pi_{i,y}} = 0$, and $\mathcal{E}_{Q_{i,y}}(f) = \|f\|_{\Pi_{i,y}}^2$ for $f \in L_0^2(\Pi_{i,y})$.
	\end{remark}
	
	Based on Theorem~\ref{thm:random}, one can immediately derive the following corollary, which
	gives the precise form of~\eqref{ine:Tbound-loose}.
	
	\begin{corollary} \label{cor:random}
		Suppose that there exists a constant $C \in [0,1]$ such that $\|Q_{i,y}\|_{\Pi_{i,y}} \leq C$ for $i = 1,\dots,n$ and $M_{-i}$-a.e. $y \in \X_{-i}$.
		Then
		\begin{equation} \label{ine:random-bound-1}
			(1-C) (1 - \|T\|_{\Pi}) \leq 1 - \|\hat{T}\|_{\Pi} \leq (1+C)(1 - \|T\|_{\Pi}).
		\end{equation}
		If, furthermore, $Q_{i,y}$ is positive semi-definite for $i = 1,\dots,n$ and $M_{-i}$-a.e. $y \in \X_{-i}$,
		then 
		\begin{equation} \label{ine:random-bound-2}
			1 - \|\hat{T}\|_{\Pi} \leq 1 - \|T\|_{\Pi}.
		\end{equation}
	\end{corollary}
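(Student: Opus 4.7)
The plan is to deduce the corollary from Theorem~\ref{thm:random} applied with appropriate constants $c_1$ and $c_2$, combined with the variational characterization of $\|\cdot\|_\Pi$ given in \eqref{eq:norm-def}. The main preparation is to translate the hypothesis $\|Q_{i,y}\|_{\Pi_{i,y}} \le C$ into a two-sided bound on the Dirichlet ratios $\mathcal{E}_{Q_{i,y}}(g)/\|g\|_{\Pi_{i,y}}^2$. Since $Q_{i,y}$ is self-adjoint, \eqref{eq:norm-def} yields $|\langle g, Q_{i,y} g\rangle_{\Pi_{i,y}}| \le C\|g\|_{\Pi_{i,y}}^2$ for every $g \in L_0^2(\Pi_{i,y})$, and since $\mathcal{E}_{Q_{i,y}}(g) = \|g\|_{\Pi_{i,y}}^2 - \langle g, Q_{i,y} g\rangle_{\Pi_{i,y}}$ for mean-zero $g$, this gives $\mathcal{E}_{Q_{i,y}}(g)/\|g\|_{\Pi_{i,y}}^2 \in [1-C,\,1+C]$. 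Thus Theorem~\ref{thm:random} with $c_1 = 1-C$ and $c_2 = 1+C$ yields
\[
(1-C)\,\mathcal{E}_T(f) \le \mathcal{E}_{\hat{T}}(f) \le (1+C)\,\mathcal{E}_T(f), \qquad f \in L_0^2(\Pi).
\]

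Next I would exploit the fact (noted in Section~\ref{sec:preliminaries}) that $T$ is positive semi-definite. This implies $\mathcal{E}_T(f)/\|f\|_\Pi^2 \in [0,1]$, so $\sup_f \mathcal{E}_T(f)/\|f\|_\Pi^2 \le 1$ and in particular $1 - \|T\|_\Pi = \inf_f \mathcal{E}_T(f)/\|f\|_\Pi^2$ in the characterization \eqref{eq:norm-def}. For the upper bound in \eqref{ine:random-bound-1}, applying \eqref{eq:norm-def} to $\hat T$ and retaining only the infimum term, we obtain
\[
1 - \|\hat T\|_\Pi \le \inf_{f \in L_0^2(\Pi)\setminus\{0\}} \frac{\mathcal{E}_{\hat T}(f)}{\|f\|_\Pi^2} \le (1+C)\inf_{f}\frac{\mathcal{E}_T(f)}{\|f\|_\Pi^2} = (1+C)(1-\|T\|_\Pi).
\]
For the lower bound, I would bound both terms inside the $\min$ in \eqref{eq:norm-def}: the infimum term is at least $(1-C)\inf_f \mathcal{E}_T(f)/\|f\|_\Pi^2 = (1-C)(1-\|T\|_\Pi)$, and the $\sup$-term satisfies $2 - \sup_f \mathcal{E}_{\hat T}(f)/\|f\|_\Pi^2 \ge 2 - (1+C)\cdot 1 = 1 - C \ge (1-C)(1-\|T\|_\Pi)$, so the minimum of the two is at least $(1-C)(1-\|T\|_\Pi)$.

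For the refinement \eqref{ine:random-bound-2}, I would invoke positive semi-definiteness of each $Q_{i,y}$, which tightens the Dirichlet ratio to $\mathcal{E}_{Q_{i,y}}(g)/\|g\|_{\Pi_{i,y}}^2 \in [0,1]$, so one can take $c_2 = 1$ in Theorem~\ref{thm:random}. Then $\mathcal{E}_{\hat T}(f) \le \mathcal{E}_T(f)$, and the same infimum step as above gives $1 - \|\hat T\|_\Pi \le \inf_f \mathcal{E}_T(f)/\|f\|_\Pi^2 = 1 - \|T\|_\Pi$. I do not anticipate a substantive obstacle; the only subtlety is that $\hat T$ is not assumed positive semi-definite in the first part, which is why the full two-term characterization in \eqref{eq:norm-def} must be retained when deriving the lower bound, rather than simply identifying $1 - \|\hat T\|_\Pi$ with the Dirichlet infimum.
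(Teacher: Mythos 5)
Your proposal is correct and follows essentially the same route as the paper: translate $\|Q_{i,y}\|_{\Pi_{i,y}}\le C$ into the Dirichlet-form bound \eqref{ine:Qiyi-bound} with $c_1=1-C$ and $c_2=1+C$ (or $c_2=1$ in the positive semi-definite case), apply Theorem~\ref{thm:random}, and conclude via \eqref{eq:norm-def} together with the positive semi-definiteness of $T$. The only difference is that you spell out the handling of the two-term minimum in \eqref{eq:norm-def} for the lower bound, which the paper leaves implicit; that step is carried out correctly.
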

	
	\begin{proof}
		Recall that $T$ is positive semi-definite.
		This implies that
		\[
		1 -\|T\|_{\Pi} = \inf_{f \in L_0^2(\Pi) \setminus \{0\}} \frac{\mathcal{E}_{T}( f)}{\|f\|_{\Pi}^2} \leq \sup_{f \in L_0^2(\Pi) \setminus \{0\}} \frac{\mathcal{E}_{T}( f)}{\|f\|_{\Pi}^2} \leq 1.
		\]
		See Section \ref{sec:preliminaries}.
		Moreover, $\|Q_{i,y}\|_{\Pi_{i,y}} \leq C$ implies that \eqref{ine:Qiyi-bound} holds with $c_1 = 1-C$ and $c_2 = 1+C$.
		Then \eqref{ine:random-bound-1} follows directly from Theorem \ref{thm:random} and \eqref{eq:norm-def}.
		When $Q_{i,y}$ is positive semi-definite for $i = 1,\dots,n$ and $M_{-i}$-a.e. $y \in \X_{-i}$, \eqref{ine:Qiyi-bound} holds with $c_1 = 1-C$ and $c_2 = 1$.
		Then \eqref{ine:random-bound-2} follows from Theorem \ref{thm:random} and \eqref{eq:norm-def}.
	\end{proof}

	{
		One may also use Theorem \ref{thm:random} to compare $T$ and $\hat{T}$ in terms of asymptotic variance.
		\begin{corollary} \label{cor:random-var}
			Suppose that there exist constants $c_1, c_2 \in (0,2)$ such that \eqref{ine:Qiyi-bound} holds for $i = 1,\dots,n$, $M_{-i}$-a.e. $y \in \X_{-i}$, and $g \in L_0^2(\Pi_{i,y}) \setminus \{0\}$.
			Assume further that $\|T\|_{\Pi} < 1$.
			Then, for $f \in L_0^2(\Pi)$,
			\[
			c_2^{-1} \mbox{var}_T(f) + (c_2^{-1} - 1) \|f\|_{\Pi}^2 \leq \mbox{var}_{\hat{T}}(f) \leq c_1^{-1} \mbox{var}_T(f) + (c_1^{-1} - 1) \|f\|_{\Pi}^2.
			\]
		\end{corollary}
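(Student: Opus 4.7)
The plan is to convert the Dirichlet-form inequalities from Theorem \ref{thm:random} into quadratic-form (operator) inequalities between $I-T$ and $I-\hat{T}$ on $L_0^2(\Pi)$, invert them via operator monotonicity of inversion, and then feed the result into the asymptotic-variance identity from Section~\ref{sec:preliminaries}.

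Concretely, since both $T$ and $\hat{T}$ are self-adjoint, for every $f \in L_0^2(\Pi)$ we have $\mathcal{E}_T(f) = \langle f, (I-T)f \rangle_{\Pi}$ and $\mathcal{E}_{\hat{T}}(f) = \langle f, (I-\hat{T})f \rangle_{\Pi}$. Theorem \ref{thm:random} then says
\[
c_1 \langle f, (I-T)f \rangle_{\Pi} \;\leq\; \langle f, (I-\hat{T})f \rangle_{\Pi} \;\leq\; c_2 \langle f, (I-T)f \rangle_{\Pi}, \qquad f \in L_0^2(\Pi),
\]
which is the operator inequality $c_1(I-T) \leq I-\hat{T} \leq c_2(I-T)$. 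Before inverting, I would verify that both operators are positive, bounded, and boundedly invertible on $L_0^2(\Pi)$. For $I-T$ this follows from $\|T\|_\Pi < 1$ together with the positive semi-definiteness of $T$. For $I-\hat{T}$, combining the left inequality with $\mathcal{E}_T(f) \geq (1-\|T\|_\Pi)\|f\|_\Pi^2$ gives $\mathcal{E}_{\hat{T}}(f) \geq c_1(1-\|T\|_\Pi)\|f\|_\Pi^2$, while the upper bound gives $\mathcal{E}_{\hat{T}}(f) \leq c_2\|f\|_\Pi^2$; together with the identity \eqref{eq:norm-def}, these yield $\|\hat{T}\|_\Pi \leq \max\{c_2 - 1,\, 1 - c_1(1-\|T\|_\Pi)\} < 1$, since $c_1, c_2 \in (0,2)$.

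With both operators positive and boundedly invertible, operator monotonicity of inversion---a standard consequence of the spectral theorem for positive self-adjoint operators---yields
\[
c_2^{-1}(I-T)^{-1} \;\leq\; (I-\hat{T})^{-1} \;\leq\; c_1^{-1}(I-T)^{-1}
\]
as quadratic forms on $L_0^2(\Pi)$. Pairing this inequality with $f \in L_0^2(\Pi)$, multiplying by~$2$, and invoking the identity $\mbox{var}_K(f) = 2\langle f, (I-K)^{-1} f \rangle_{\Pi} - \|f\|_\Pi^2$ for $K \in \{T, \hat{T}\}$ (valid since both $\|T\|_\Pi, \|\hat{T}\|_\Pi < 1$) replaces $2\langle f, (I-T)^{-1} f \rangle_{\Pi}$ on the flanks with $\mbox{var}_T(f) + \|f\|_\Pi^2$; a final rearrangement yields exactly the claimed two-sided bound on $\mbox{var}_{\hat{T}}(f)$.

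The only genuinely non-trivial point is the passage from the quadratic-form inequality to its inverted form: one must be careful to work on the correct Hilbert space $L_0^2(\Pi)$, where both $I-T$ and $I-\hat{T}$ really are bounded below by positive multiples of the identity, before invoking operator monotonicity of inversion. The positivity and invertibility check for $I-\hat{T}$ above is designed precisely to legitimize this step; the remainder is routine bookkeeping around the asymptotic-variance formula.
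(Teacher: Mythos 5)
Your proposal is correct and follows essentially the same route as the paper: the paper's proof simply cites the Dirichlet-form-to-asymptotic-variance comparison lemma (Theorem A.2 of Caracciolo et al.\ / Lemma 33 of the Andrieu et al.\ supplement), and your argument---passing from $c_1(I-T)\leq I-\hat{T}\leq c_2(I-T)$ to the inverted quadratic-form inequality via operator monotonicity of inversion, then applying $\mbox{var}_K(f)=2\langle f,(I-K)^{-1}f\rangle_{\Pi}-\|f\|_{\Pi}^2$---is precisely how that cited lemma is established. Your explicit verification that $\|\hat{T}\|_{\Pi}<1$ (needed to legitimize the variance identity for $\hat{T}$) is a worthwhile detail that the paper leaves implicit.
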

	}
	
	\begin{proof}
		This follows from Theorem \ref{thm:random} and Theorem A.2 of \cite{caracciolo1990nonlocal}.
		See also Lemma 33 of \cite{andrieu18supplement}.
	\end{proof}
	
	%\begin{remark}
	%	Note that, for $f \in L_0^2(\Pi)$, $\|f\|_{\Pi}^2$ is the variance of $f(X)$ if $X \sim \Pi$.
	%\end{remark}
	
	\section{Data augmentation chain} \label{sec:da}
	
	Building on the work of \citet{andrieu2018uniform}, this section will present some new results regarding the relation between exact and hybrid data augmentation samplers.
	
	Recall that $S(y, \df y') = \int_{\X_2} \Pi_{2,y}(\df z) \, \Pi_{1,z}(\df y')$ is the Mtk of a data augmentation chain.
	The chain is reversible with respect to~$M_1$, and its convergence rate is determined by $\|S\|_{M_1}$.
	In fact, it can be shown that $S$ is positive semi-definite \citep{liu1995covariance}.
	For $z \in \X_2$, $Q_{1,z}$ is reversible with respect to $\Pi_{1,z}$, and the corresponding chain's convergence rate is $\|Q_{1,z}\|_{\Pi_{1,z}}$.
	The hybrid chain of interest is associated with the Mtk $\hat{S}(y, \df y') = \int_{\X_2} \Pi_{2,y}( \df z) \, Q_{1, z}(y, \df y')$.
	The chain is reversible with respect to~$M_1$ \cite{jones2014convergence}, and its convergence rate is determined by $\|\hat{S}\|_{M_1}$.

	For $M_2$-a.e. $z \in \X_2$, a function $f \in L_0^2(M_1)$ can be seen as an element in $L^2(\Pi_{1,z})$.
	This is because
	\[
	\int_{\X_1} f(y)^2 \, M_1(\df y) = \int_{\X_2} M_2(\df z) \int_{\X_1} \Pi_{1, z}(\df y) \, f(y)^2. 
	\]
	{
		The following lemma was derived in \cite{andrieu2018uniform}.
		
		\begin{lemma} \citep[][Section 7]{andrieu2018uniform} \label{lem:andrieu-0} 
			For $f \in L_0^2(M_1)$,
			\[
			\mathcal{E}_{S}(f) = \int_{\X_2} M_2(\df z) \|f - \Pi_{1,z} f \|_{\Pi_{1,z}}^2, \quad \mathcal{E}_{\hat{S}}( f) = \int_{\X_2} M_2(\df z) \, \mathcal{E}_{Q_{1,z}}( f).
			\]
		\end{lemma}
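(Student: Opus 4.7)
The cleanest route is to work with the quadratic-form (energy) representation of the Dirichlet form, which sidesteps any need to fuss over centering constants. Recall from Section~\ref{sec:preliminaries} that, for a reversible Mtk $K$ with stationary measure $\omega$,
\[
\mathcal{E}_K(f) = \tfrac{1}{2}\int_{\Y^2} \omega(\df y)\,K(y,\df y')\,[f(y)-f(y')]^2.
\]
The proof will exploit this formula together with the two disintegrations of the joint law on $\X_1\times\X_2$ induced by $\Pi$, namely $M_1(\df y)\,\Pi_{2,y}(\df z) = M_2(\df z)\,\Pi_{1,z}(\df y)$.

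For the first identity, I would substitute $S(y,\df y')=\int_{\X_2}\Pi_{2,y}(\df z)\,\Pi_{1,z}(\df y')$ into the quadratic form to obtain
\[
\mathcal{E}_S(f) = \tfrac{1}{2}\int_{\X_1}\!\!\int_{\X_2}\!\!\int_{\X_1} M_1(\df y)\,\Pi_{2,y}(\df z)\,\Pi_{1,z}(\df y')\,[f(y)-f(y')]^2.
\]
Swap the order of the $y$ and $z$ integrations using the disintegration identity, so that the measure $M_1(\df y)\Pi_{2,y}(\df z)$ becomes $M_2(\df z)\Pi_{1,z}(\df y)$. The inner integral is then $\tfrac{1}{2}\int\int \Pi_{1,z}(\df y)\,\Pi_{1,z}(\df y')[f(y)-f(y')]^2$, which equals $\|f\|_{\Pi_{1,z}}^2 - (\Pi_{1,z}f)^2 = \|f - \Pi_{1,z}f\|_{\Pi_{1,z}}^2$. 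Integrating against $M_2(\df z)$ yields the claimed formula for $\mathcal{E}_S(f)$.

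For the second identity the computation is even more direct: plug in $\hat{S}(y,\df y') = \int_{\X_2}\Pi_{2,y}(\df z)\,Q_{1,z}(y,\df y')$ to obtain
\[
\mathcal{E}_{\hat S}(f) = \tfrac{1}{2}\int_{\X_1}\!\!\int_{\X_2}\!\!\int_{\X_1} M_1(\df y)\,\Pi_{2,y}(\df z)\,Q_{1,z}(y,\df y')\,[f(y)-f(y')]^2,
\]
apply the same disintegration swap, and recognise $\tfrac{1}{2}\int\int \Pi_{1,z}(\df y)\,Q_{1,z}(y,\df y')[f(y)-f(y')]^2 = \mathcal{E}_{Q_{1,z}}(f)$, where we view $f$ as an element of $L^2(\Pi_{1,z})$ for $M_2$-a.e.\ $z$ (as noted just before the lemma).

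There is essentially no hard step; the only item that needs a brief justification is that all integrands are non-negative (or integrable) so Fubini/Tonelli can be applied to swap $y$ and $z$, which follows from $f \in L_0^2(M_1) \subset L^2(M_1)$ and the iterated-integral bound $\int M_1(\df y)\int \Pi_{2,y}(\df z)\,f(y)^2 = \int M_2(\df z)\int \Pi_{1,z}(\df y)\,f(y)^2 < \infty$. The mildly subtle point worth flagging is that the representation $\mathcal{E}_{Q_{1,z}}(f)$ in the second identity uses the uncentred form of $f$; this is legitimate precisely because the quadratic-form representation of the Dirichlet form is insensitive to adding constants, so no conditional centering by $\Pi_{1,z}f$ is required.
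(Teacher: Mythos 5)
Your proof is correct. The paper does not prove this lemma itself (it is cited from Andrieu, Lee and Vihola), but your argument—writing the Dirichlet form as the quadratic form $\tfrac{1}{2}\int\omega(\df y)K(y,\df y')[f(y)-f(y')]^2$, inserting the definitions of $S$ and $\hat{S}$, and swapping the disintegrations $M_1(\df y)\Pi_{2,y}(\df z)=M_2(\df z)\Pi_{1,z}(\df y)$ via Tonelli—is exactly the decomposition the paper uses in its proof of Theorem~\ref{thm:random} (the displays \eqref{eq:Thatff-1} and \eqref{eq:Dirichlet-T}), and your remark that the quadratic form is insensitive to the conditional centering by $\Pi_{1,z}f$ correctly handles the only subtle point.
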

	}
	
	Immediately, one can obtain the precise version of \eqref{ine:Sbound-loose}.
	
	\begin{proposition} { \citep[][Theorem 25, Remark 26]{andrieu2018uniform}} \label{pro:andrieu-0}
		Suppose that there exist constants $c_1, c_2 \in [0,2]$ such that
		\[
		c_1 \leq \frac{\mathcal{E}_{Q_{1,z}}(g) }{\|g\|_{\Pi_{1,z}}^2 } \leq c_2
		\]
		for $M_2$-a.e. $z \in \X_2$ and $g \in L_0^2(\Pi_{1,z}) \setminus \{0\}$.
		Then, for $f \in L_0^2(\Pi)$,
		\[
		c_1 \, \mathcal{E}_{S}( f) \leq \mathcal{E}_{\hat{S}}(f) \leq c_2 \, \mathcal{E}_{S}(f).
		\]
		In particular, if there exists a constant $C \in [0,1]$ such that $\|Q_{1,z}\|_{\Pi_{1,z}} \leq C$ for $M_2$-a.e. $z \in \X_2$, then
		\[
		(1 - C) (1 - \|S\|_{M_1}) \leq 1 - \|\hat{S}\|_{M_1} \leq (1+C) (1 - \|S\|_{M_1}).
		\]
		If, furthermore, $Q_{1,z}$ is positive semi-definite for $M_2$-a.e. $z \in \X_2$, then the upper bound can be tightened to $1 - \|S\|_{M_1}$.
		A bound regarding asymptotic variance also exists.
	\end{proposition}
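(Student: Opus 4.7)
The plan is to mirror the proof of Theorem \ref{thm:random} and Corollary \ref{cor:random}, using Lemma \ref{lem:andrieu-0} in place of the explicit Dirichlet form computation that was carried out there. First, I would fix $f \in L_0^2(M_1)$ and view it, for $M_2$-a.e. $z \in \X_2$, as an element of $L^2(\Pi_{1,z})$, so that $f - \Pi_{1,z} f \in L_0^2(\Pi_{1,z})$. Since Dirichlet forms are invariant under additive constants, $\mathcal{E}_{Q_{1,z}}(f) = \mathcal{E}_{Q_{1,z}}(f - \Pi_{1,z} f)$, and the hypothesis gives the pointwise sandwich
\[
c_1 \, \|f - \Pi_{1,z} f\|_{\Pi_{1,z}}^2 \;\leq\; \mathcal{E}_{Q_{1,z}}(f) \;\leq\; c_2 \, \|f - \Pi_{1,z} f\|_{\Pi_{1,z}}^2.
\]
Integrating against $M_2(\df z)$ and invoking the two identities in Lemma \ref{lem:andrieu-0} immediately yields $c_1 \, \mathcal{E}_S(f) \leq \mathcal{E}_{\hat S}(f) \leq c_2 \, \mathcal{E}_S(f)$, which is the Dirichlet form comparison.

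Next, I would deduce the spectral gap statement. Recall that $S$ is positive semi-definite, so by the second identity in \eqref{eq:norm-def}, $1 - \|S\|_{M_1} = \inf_{f} \mathcal{E}_S(f)/\|f\|_{M_1}^2$ and $\sup_{f} \mathcal{E}_S(f)/\|f\|_{M_1}^2 \leq 1$. Under the assumption $\|Q_{1,z}\|_{\Pi_{1,z}} \leq C$ we may take $c_1 = 1 - C$ and $c_2 = 1 + C$. Dividing the Dirichlet form inequality by $\|f\|_{M_1}^2$ and passing to infimum/supremum gives
\[
\inf_f \frac{\mathcal{E}_{\hat S}(f)}{\|f\|_{M_1}^2} \geq (1-C)(1 - \|S\|_{M_1}), \quad \inf_f \frac{\mathcal{E}_{\hat S}(f)}{\|f\|_{M_1}^2} \leq (1+C)(1 - \|S\|_{M_1}),
\]
and $2 - \sup_f \mathcal{E}_{\hat S}(f)/\|f\|_{M_1}^2 \geq 2 - (1+C) = 1 - C \geq (1-C)(1-\|S\|_{M_1})$. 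Combining via \eqref{eq:norm-def} produces the two-sided bound on $1 - \|\hat S\|_{M_1}$. If each $Q_{1,z}$ is positive semi-definite, then $\mathcal{E}_{Q_{1,z}}(g)/\|g\|_{\Pi_{1,z}}^2 \leq 1$ for all admissible $g$, so one can take $c_2 = 1$ and the same argument tightens the upper bound to $1 - \|S\|_{M_1}$.

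For the asymptotic variance assertion, I would appeal to the same Caracciolo--Pelissetto--Sokal correspondence used in the proof of Corollary \ref{cor:random-var}, which converts a two-sided Dirichlet form bound into a two-sided bound on asymptotic variance; the statement is analogous to Corollary \ref{cor:random-var} with $T, \hat T, \Pi$ replaced by $S, \hat S, M_1$, so no new ingredient is required.

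I do not anticipate a serious obstacle in this argument: the hard work has already been done in Lemma \ref{lem:andrieu-0}, which gives the crucial representations of $\mathcal{E}_S$ and $\mathcal{E}_{\hat S}$ as integrals over $\X_2$ of objects on which the hypothesis acts pointwise. The only subtle point is handling the additive-constant shift $f \mapsto f - \Pi_{1,z} f$ cleanly so that the hypothesis on $L_0^2(\Pi_{1,z})$ can be applied, together with the mild measurability check that the pointwise inequality integrates correctly against $M_2$.
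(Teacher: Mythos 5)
Your proposal is correct and follows essentially the same route the paper intends: the paper presents this proposition as following "immediately" from Lemma \ref{lem:andrieu-0} (citing Andrieu et al.), in exact parallel to how Theorem \ref{thm:random} and Corollary \ref{cor:random} are derived, which is precisely the centering-then-integrating argument and the \eqref{eq:norm-def} bookkeeping you carry out. The handling of the shift $f \mapsto f - \Pi_{1,z} f$, the use of positive semi-definiteness of $S$ to identify $1-\|S\|_{M_1}$ with the infimum of the Dirichlet ratio, and the appeal to the Caracciolo--Pelissetto--Sokal result for the asymptotic variance claim all match the paper's treatment.
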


	Note the intriguing similarity between Theorem~\ref{thm:random} and Proposition \ref{pro:andrieu-0}, and among some existing spectral gap bounds outside the context of hybrid Gibbs samplers \citep{caracciolo1992two,madras1999importance,madras2002markov,carlen2003determination,qin2023geometric}.
	{
		The idea of decomposing Dirichlet forms seems to be present in most of these works, albeit not always transparently. }
	
	One undesirable feature of Proposition \ref{pro:andrieu-0} is that the lower bound on $1 - \|\hat{S}\|_{M_1}$ is nontrivial (i.e., strictly greater than~0) only if $\|Q_{1,z}\|_{\Pi_{1,z}}$ is uniformly bounded away from unity.
	The main goal of Section \ref{sec:da} is to relax this requirement.
	It turns out that we can achieve this through an argument from \cite{latuszynski2014convergence}, who studied hybrid slice samplers.
	(We were not able to make a meaningful improvement on Theorem \ref{thm:random} and Corollary \ref{cor:random}, which suffer from a similar restriction, through an analogous treatment.)
	
	The following theorem is proved in Appendix~\ref{app:deterministic-2}.
	{ The proof combines Lemma \ref{lem:andrieu-0}, which is taken from \cite{andrieu2018uniform}, with arguments from \cite{latuszynski2014convergence}.
	}

	\begin{theorem} \label{thm:deterministic-2}
		Let $\gamma: \X_2 \to [0,1]$ be a measurable function such that $\|Q_{1,z}\|_{\Pi_{1,z}} \leq \gamma(z)$ for $M_2$-a.e. $z \in \X_2$.
		Let~$t$ be some positive integer, and let $\alpha_t \in [0,1]$ be such that
		\[
		\int_{\X_2} \gamma(z)^t \, \Pi_{2,y}( \df z) \leq \alpha_t
		\]
		for $M_1$-a.e. $y \in \X_1$.
		If either (i) $t$ is even, or (ii) $Q_{1,z}$ is positive semi-definite for $M_2$-a.e. $z \in \X_2$, then, for $f \in L_0^2(M_1) \setminus \{0\}$,
		\begin{equation} \label{ine:Shat-Dir-2}
			0 \leq \frac{\langle f, \hat{S} f \rangle_{M_1}^t}{\|f\|_{M_1}^{2t}} \leq \frac{\langle f, S f \rangle_{M_1}}{\|f\|_{M_1}^2} + \alpha_t.
		\end{equation}
		%	i.e.,
		%	\[
		%	1 - \frac{\langle f, S f \rangle_{M_1}}{\|f\|_{M_1}^2} - \alpha_t \leq 1 - \frac{\langle f, \hat{S} f \rangle_{M_1}^t}{\|f\|_{M_1}^{2t}} \leq 1.
		%	\]
	\end{theorem}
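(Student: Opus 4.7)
\noindent The plan is in two parts: first reduce the left-hand side to a bound on $\langle f, \hat{S}^t f\rangle_{M_1}$ via spectral calculus, and then control the latter by introducing an auxiliary data augmentation kernel built from iterates of the inner kernels.

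For the reduction, recall that $\hat{S}$ is self-adjoint on $L_0^2(M_1)$. Under (ii) it is in fact positive semi-definite: combining Lemma~\ref{lem:andrieu-0} with the easily verified identity
\[
\langle f, \hat{S} f \rangle_{M_1} = \langle f, S f \rangle_{M_1} + \int_{\X_2} M_2(\df z)\,\langle f - \Pi_{1,z}f, Q_{1,z}(f - \Pi_{1,z}f)\rangle_{\Pi_{1,z}}
\]
expresses $\langle f, \hat{S}f\rangle_{M_1}$ as a sum of non-negative terms, since $S$ is PSD and each $Q_{1,z}$ is PSD under (ii). The spectral theorem applied to $\hat{S}$ then furnishes a spectral measure $\mu_f$ of $f$ with total mass $\|f\|_{M_1}^2$ such that $\langle f, \hat{S}^s f\rangle_{M_1} = \int \lambda^s\, \df\mu_f(\lambda)$ for every positive integer $s$. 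Under (i) the map $\lambda \mapsto \lambda^t$ is convex on all of $\mathbb{R}$; under (ii) the support of $\mu_f$ sits in $[0, \|\hat{S}\|_{M_1}]$, where the same map is convex for any $t \geq 1$. Jensen's inequality applied to the probability measure $\mu_f/\|f\|_{M_1}^2$ therefore yields
\[
\left(\frac{\langle f, \hat{S}f\rangle_{M_1}}{\|f\|_{M_1}^2}\right)^t \leq \frac{\langle f, \hat{S}^t f\rangle_{M_1}}{\|f\|_{M_1}^2},
\]
as well as the non-negativity $\langle f, \hat{S}^t f\rangle_{M_1} \geq 0$, which secures the lower bound in \eqref{ine:Shat-Dir-2}.

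It then remains to establish $\langle f, \hat{S}^t f\rangle_{M_1}/\|f\|_{M_1}^2 \leq \langle f, Sf\rangle_{M_1}/\|f\|_{M_1}^2 + \alpha_t$. Following the strategy of \cite{latuszynski2014convergence}, I would introduce the auxiliary reversible kernel
\[
\tilde{S}(y, \df y') := \int_{\X_2} \Pi_{2,y}(\df z)\, Q_{1,z}^t(y, \df y'),
\]
obtained by substituting $Q_{1,z}^t$ for $Q_{1,z}$ inside $\hat{S}$. The proof of Lemma~\ref{lem:andrieu-0} then applies verbatim to give $\mathcal{E}_{\tilde{S}}(f) = \int_{\X_2} M_2(\df z)\,\mathcal{E}_{Q_{1,z}^t}(f)$. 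Since $\|Q_{1,z}^t\|_{\Pi_{1,z}} \leq \gamma(z)^t$, one has $\mathcal{E}_{Q_{1,z}^t}(f) \geq (1 - \gamma(z)^t)\|f - \Pi_{1,z}f\|_{\Pi_{1,z}}^2$, and combining with the identity $\mathcal{E}_S(f) = \int_{\X_2} M_2(\df z)\|f - \Pi_{1,z}f\|_{\Pi_{1,z}}^2$ from Lemma~\ref{lem:andrieu-0} together with the hypothesis on $\alpha_t$ produces $\mathcal{E}_{\tilde{S}}(f) \geq \mathcal{E}_S(f) - \alpha_t\|f\|_{M_1}^2$; equivalently, $\langle f, \tilde{S}f\rangle_{M_1} \leq \langle f, Sf\rangle_{M_1} + \alpha_t\|f\|_{M_1}^2$.

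The main obstacle is the remaining comparison $\langle f, \hat{S}^t f\rangle_{M_1} \leq \langle f, \tilde{S}f\rangle_{M_1}$: intuitively, refreshing $z$ at every outer iteration in $\hat{S}^t$ should not produce a larger quadratic form than keeping $z$ fixed for $t$ inner iterations as in $\tilde{S}$, but the two kernels couple $(Y_k, Z_k)$ in essentially different ways, so this comparison does not follow from Dirichlet-form identities alone. I would attempt it by expanding $\hat{S}^t$ over the augmented chain $(Y_k, Z_k)_{k \geq 0}$ of the hybrid sampler, conditioning on the trajectory $(Z_1, \dots, Z_t)$ so that $\mathbb{E}[f(Y_t) \mid Y_0, Z_{1:t}] = (Q_{1,Z_1}\cdots Q_{1,Z_t} f)(Y_0)$, and then applying Cauchy--Schwarz together with the operator-norm bounds $\|Q_{1,Z_k}\|_{\Pi_{1,Z_k}} \leq \gamma(Z_k)$ to extract a product $\prod_k \gamma(Z_k)$ that collapses, via the marginal structure $Z_k \mid Y_{k-1} \sim \Pi_{2, Y_{k-1}}$, into the hypothesis $\int_{\X_2} \gamma(z)^t\,\Pi_{2,y}(\df z) \leq \alpha_t$. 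The careful bookkeeping of inner products across the varying spaces $L^2(\Pi_{1,z})$ is precisely where techniques in the spirit of \cite{latuszynski2014convergence} would enter essentially.
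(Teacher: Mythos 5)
Your first and third steps are sound and match ingredients of the paper's proof: the Jensen/spectral-calculus reduction giving $\bigl(\langle f,\hat{S}f\rangle_{M_1}/\|f\|_{M_1}^2\bigr)^t \le \langle f,\hat{S}^t f\rangle_{M_1}/\|f\|_{M_1}^2$ together with the non-negativity under (i) or (ii) is correct, and your Dirichlet-form comparison $\langle f,\tilde{S}f\rangle_{M_1} \le \langle f,Sf\rangle_{M_1} + \alpha_t\|f\|_{M_1}^2$ for the auxiliary kernel $\tilde{S}(y,\df y') = \int_{\X_2}\Pi_{2,y}(\df z)\,Q_{1,z}^t(y,\df y')$ is exactly the paper's Step using Lemma~\ref{lem:andrieu-0} with $Q_{1,z}$ replaced by $Q_{1,z}^t$. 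But the middle link you flag as "the main obstacle," namely $\langle f,\hat{S}^t f\rangle_{M_1} \le \langle f,\tilde{S}f\rangle_{M_1}$, is a genuine gap, and your sketch for closing it does not work as described. Writing $\hat{S} = V^*QV$ and $\tilde{S} = V^*Q^tV$ (with $V$ the lift $L_0^2(M_1)\to L_0^2(\Pi)$ and $Q$ the inner-kernel operator on $L_0^2(\Pi)$), your claim amounts to $\langle g,(QP)^{t-1}Qg\rangle_{\Pi} \le \langle g,Q^tg\rangle_{\Pi}$ with $P = VV^*$ an orthogonal projection and $g = Vf$. This holds for $t=2$ (it reduces to $\|PQg\|^2\le\|Qg\|^2$), but for $t\ge 3$ it is not a valid operator inequality in general: inserting projections between powers of a PSD contraction can inflate the quadratic form precisely when $Q$ has small eigenvalues, so the special structure of $P$ and $Q$ would have to be used in an essential way. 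Your proposed trajectory expansion also runs into a mismatch with the hypothesis: conditioning on $(Z_1,\dots,Z_t)$ produces a product $\prod_k\gamma(Z_k)$ over $t$ \emph{distinct} values of $z$ drawn along a moving trajectory, whereas the assumption controls $\int_{\X_2}\gamma(z)^t\,\Pi_{2,y}(\df z)$, the $t$-th power at a \emph{single} $z$; converting one to the other while tracking the changing conditioning laws is the whole difficulty and is not resolved by Cauchy--Schwarz alone.

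The paper sidesteps your obstacle entirely: it never compares $\hat{S}^t$ with $\tilde{S}$. Instead, the Jensen/spectral argument is applied not to $\hat{S}$ on $L_0^2(M_1)$ but to the lifted operator $Q$ on $L_0^2(\Pi)$. Since $\langle f,\hat{S}f\rangle_{M_1} = \langle Vf,QVf\rangle_{\Pi}$, $\langle f,\tilde{S}f\rangle_{M_1} = \langle Vf,Q^tVf\rangle_{\Pi}$, and $\|Vf\|_{\Pi} = \|f\|_{M_1}$, Lemma~\ref{lem:Kt} applied to $Q$ and $Vf$ gives directly
\[
0 \le \left(\frac{\langle f,\hat{S}f\rangle_{M_1}}{\|f\|_{M_1}^2}\right)^t = \left(\frac{\langle Vf,QVf\rangle_{\Pi}}{\|Vf\|_{\Pi}^2}\right)^t \le \frac{\langle Vf,Q^tVf\rangle_{\Pi}}{\|Vf\|_{\Pi}^2} = \frac{\langle f,\tilde{S}f\rangle_{M_1}}{\|f\|_{M_1}^2},
\]
which chains with your third step to finish the proof. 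In short: perform the convexity step one level up, on $Q$ rather than on $\hat{S}$, and the problematic intermediate object $\hat{S}^t$ never appears.
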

	
	Theorem~\ref{thm:deterministic-2} gives the following bound regarding spectral gaps, which gives the precise form of \eqref{ine:Sbound-loose-2}.
	
	\begin{corollary} \label{cor:deterministic-2}
		For each positive integer~$t$, define $\alpha_t$ as in Theorem~\ref{thm:deterministic-2}.
		Then, if~$t$ is even, or if $Q_{1,z}$ is positive semi-definite for $M_2$-a.e. $z \in \X_2$,
		\[
		t(1-\|\hat{S}\|_{M_1}) \geq 1-\|\hat{S}\|_{M_1}^t \geq 1 - \|S\|_{M_1} - \alpha_t.
		\]
	\end{corollary}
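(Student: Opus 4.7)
The plan is to derive Corollary \ref{cor:deterministic-2} essentially by taking a supremum in Theorem \ref{thm:deterministic-2}, after dispatching the easy first inequality separately.

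For the first inequality $t(1-\|\hat{S}\|_{M_1}) \geq 1-\|\hat{S}\|_{M_1}^t$, I would note that $\|\hat{S}\|_{M_1} \in [0,1]$ (since $\hat{S}$ is a Markov operator), and then use the elementary factorization $1 - x^t = (1-x)(1 + x + \cdots + x^{t-1}) \leq t(1-x)$ for $x \in [0,1]$.

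For the second inequality, I would start from Theorem~\ref{thm:deterministic-2}, which provides, for every $f \in L_0^2(M_1)\setminus\{0\}$,
\[
\frac{\langle f, \hat{S} f \rangle_{M_1}^t}{\|f\|_{M_1}^{2t}} \leq \frac{\langle f, S f \rangle_{M_1}}{\|f\|_{M_1}^2} + \alpha_t.
\]
Since $S$ is positive semi-definite (as stated in Section~\ref{sec:da}), the formula \eqref{eq:norm-def} gives $\|S\|_{M_1} = \sup_{f} \langle f, Sf \rangle_{M_1}/\|f\|_{M_1}^2$, so taking the supremum in $f$ on the right yields the uniform bound $\|S\|_{M_1} + \alpha_t$. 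The main work is then to identify the supremum of the left-hand side with $\|\hat{S}\|_{M_1}^t$.

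This identification is the step I expect to be slightly delicate, since \eqref{eq:norm-def} involves an absolute value. In case (i), $t$ is even, so $\langle f, \hat{S} f \rangle_{M_1}^t = |\langle f, \hat{S} f \rangle_{M_1}|^t$ and the supremum of the ratio is exactly $\|\hat{S}\|_{M_1}^t$. In case (ii), I would first verify that $\hat{S}$ itself is positive semi-definite when each $Q_{1,z}$ is: writing $\langle f, \hat{S} f \rangle_{M_1} = \int_{\X_2} M_2(\df z) \, \langle f, Q_{1,z} f \rangle_{\Pi_{1,z}}$ via the disintegration of $M_1$ through $M_2$ and $\Pi_{1,z}$, and then decomposing $f = (f - \Pi_{1,z} f) + \Pi_{1,z} f$ inside each inner product to show $\langle f, Q_{1,z} f \rangle_{\Pi_{1,z}} \geq 0$. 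Once $\hat{S}$ is positive semi-definite, $\|\hat{S}\|_{M_1}^t$ equals $\sup_f \langle f, \hat{S} f \rangle_{M_1}^t / \|f\|_{M_1}^{2t}$ by \eqref{eq:norm-def}, and the chain of inequalities closes to give $\|\hat{S}\|_{M_1}^t \leq \|S\|_{M_1} + \alpha_t$, which rearranges to the claim.
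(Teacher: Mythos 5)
Your proposal is correct and follows essentially the same route as the paper: the first inequality is the Bernoulli inequality applied to $\|\hat{S}\|_{M_1}\in[0,1]$, and the second follows by taking suprema in Theorem~\ref{thm:deterministic-2} and invoking~\eqref{eq:norm-def}. The one detail you supply that the paper leaves implicit — that $\hat{S}$ is positive semi-definite in case (ii) so the absolute value in~\eqref{eq:norm-def} is harmless — is handled in the paper via the representation $\hat{S}=V^*QV$ and Lemma~\ref{lem:Q2positive}, and your direct disintegration argument is an equally valid way to see it.
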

	
	\begin{proof}
		The first inequality is the Bernoulli inequality $t(1-x) \geq 1 - x^t$ which holds for $x \in [-1,1]$.
		The second inequality follows from Theorem~\ref{thm:deterministic-2} and~\eqref{eq:norm-def}.
	\end{proof}
	
	\begin{remark}
		Corollary~\ref{cor:deterministic-2} can be regarded as a direct generalization of Theorem~8 of \cite{latuszynski2014convergence}.
		The latter gives a similar bound for a hybrid slice sampler.
		See Section~\ref{ssec:sli} for more details.
	\end{remark}
	
	\begin{remark}
		As mentioned in Section \ref{ssec:loose}, when $\|S\|_{M_1} < 1$, for the lower bound on $1 - \|\hat{S}\|_{M_1}^t$ in Corollary \ref{cor:deterministic-2} to be nontrivial for large values of~$t$, it suffices to have $\alpha_t \to 0$ as $t \to \infty$.
		
	\end{remark}
	
	\begin{remark}
		An immediate consequence of Proposition \ref{pro:andrieu-0} and Corollary \ref{cor:deterministic-2} is: Suppose that $\alpha_t \to 0$ as $t \to \infty$.
		Then $\hat{S}$ admits a strictly positive spectral gap if and only if $S$ does.
	\end{remark}

	One may also use Theorem \ref{thm:deterministic-2} to obtain the following result regarding asymptotic variance.
	{
		\begin{corollary}
			Suppose that there is a positive integer $t$ such that $\alpha_t \leq (1-\|S\|_{M_1})/2$, where $\alpha_t$ is defined in Theorem \ref{thm:deterministic-2}.
			Assume further that $t$ is even, or $S$ is positive semi-definite.
			Finally, assume that $\|S\|_{M_1} < 1$.
			Then, for $f \in L_0^2(M_1)$,
			\[
			\mbox{var}_{\hat{S}}(f) \leq 2t \, \mbox{var}_S(f) + (2t-1) \|f\|_{M_1}^2.
			\]
		\end{corollary}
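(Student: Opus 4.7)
The plan is to translate the spectral inequality in Theorem~\ref{thm:deterministic-2} into a clean pointwise Dirichlet form comparison $\mathcal{E}_{\hat{S}}(f) \geq \mathcal{E}_S(f)/(2t)$ for all $f \in L_0^2(M_1)$, and then invoke the same Dirichlet-form-to-variance translation (Theorem A.2 of~\cite{caracciolo1990nonlocal}) already used to prove Corollary~\ref{cor:random-var}, applied with contraction ratio $c = 1/(2t)$.

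First I would fix $f \in L_0^2(M_1)$, normalize to $\|f\|_{M_1}=1$, and use the identity $\mathcal{E}_K(f) = \|f\|_{M_1}^2 - \langle f, Kf\rangle_{M_1}$ to rewrite Theorem~\ref{thm:deterministic-2} as $(1-\mathcal{E}_{\hat{S}}(f))^t \leq 1 - \mathcal{E}_S(f) + \alpha_t$. Bernoulli's inequality $(1-x)^t \geq 1 - tx$ then yields the pointwise bound $\mathcal{E}_S(f) \leq t\,\mathcal{E}_{\hat{S}}(f) + \alpha_t \|f\|_{M_1}^2$ for every $f \in L_0^2(M_1)$. Bernoulli in its classical form requires $x \in [0,1]$; this is automatic when $\hat{S}$ is positive semi-definite, which follows when $Q_{1,z}$ is positive semi-definite via the computation $\langle f, \hat{S} f\rangle_{M_1} = \int_{\X_2} \langle f, Q_{1,z} f\rangle_{\Pi_{1,z}} \, M_2(\df z) \geq 0$ (itself a consequence of Lemma~\ref{lem:andrieu-0}); when $t$ is even, $(1-x)^t \geq 0 \geq 1-tx$ for $x \geq 1/t$ while classical Bernoulli covers $x \in [0,1/t]$, so the inequality extends to the full range $x \in [0,2]$ of $\mathcal{E}_{\hat{S}}(f)/\|f\|_{M_1}^2$.

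The above bound is not yet a Dirichlet form comparison because of the additive slack $\alpha_t\|f\|_{M_1}^2$. The plan for removing it is to take a convex combination with the uniform lower bound $\mathcal{E}_{\hat{S}}(f) \geq (1-\|\hat{S}\|_{M_1})\|f\|_{M_1}^2$, which by Corollary~\ref{cor:deterministic-2} together with $t(1-\|\hat{S}\|_{M_1}) \geq 1-\|\hat{S}\|_{M_1}^t$ is at least $\frac{1-\|S\|_{M_1}-\alpha_t}{t}\|f\|_{M_1}^2$. Averaging the two bounds with equal weights yields
\[
\mathcal{E}_{\hat{S}}(f) \geq \frac{\mathcal{E}_S(f)}{2t} + \frac{1-\|S\|_{M_1}-2\alpha_t}{2t}\|f\|_{M_1}^2,
\]
and the hypothesis $\alpha_t \leq (1-\|S\|_{M_1})/2$ makes the second term non-negative, delivering the desired universal comparison $\mathcal{E}_{\hat{S}}(f) \geq \mathcal{E}_S(f)/(2t)$.

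From here the conclusion is routine: $\|\hat{S}\|_{M_1}^t \leq \|S\|_{M_1} + \alpha_t < 1$ forces $\|\hat{S}\|_{M_1} < 1$, so the asymptotic-variance formula of Section~\ref{sec:preliminaries} applies to both $S$ and $\hat{S}$, and Theorem A.2 of~\cite{caracciolo1990nonlocal} (used exactly as in Corollary~\ref{cor:random-var}) with $c = 1/(2t)$ yields $\mbox{var}_{\hat{S}}(f) \leq 2t\,\mbox{var}_S(f) + (2t-1)\|f\|_{M_1}^2$. The main obstacle I expect is the convex-combination step: the pointwise Bernoulli bound carries unavoidable $\alpha_t\|f\|^2$ slack while the uniform spectral-gap bound is blind to $\mathcal{E}_S(f)$, and it is only equal-weight averaging that lets the hypothesis $\alpha_t \leq (1-\|S\|_{M_1})/2$ be used precisely enough to erase the slack and convert a spectral inequality into a genuine Dirichlet form comparison.
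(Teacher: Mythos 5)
Your proof is correct and follows essentially the same route as the paper's: Theorem \ref{thm:deterministic-2} plus the Bernoulli inequality gives $t\,\mathcal{E}_{\hat S}(f) \ge \mathcal{E}_S(f) - \alpha_t\|f\|_{M_1}^2$, the hypothesis $\alpha_t \le (1-\|S\|_{M_1})/2$ is used to absorb the $\alpha_t$ slack and arrive at $\mathcal{E}_{\hat S}(f) \ge \mathcal{E}_S(f)/(2t)$, and Theorem A.2 of Caracciolo et al.\ converts this Dirichlet form comparison into the stated asymptotic-variance bound. The only (immaterial) difference is in the absorption step: the paper writes $(1-\alpha_t)I - S = \tfrac12[(1-2\alpha_t)I - S] + \tfrac12(I-S)$ and uses that the first summand is positive semi-definite, i.e.\ the lower bound $\mathcal{E}_S(f) \ge 2\alpha_t\|f\|_{M_1}^2$, whereas you average with the spectral-gap lower bound on $\mathcal{E}_{\hat S}(f)$ supplied by Corollary \ref{cor:deterministic-2}; both are valid and yield the same constant.
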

	}
	
	\begin{proof}
		Let $f \in L_0^2 \setminus \{0\}$ be arbitrary.
		By Theorem \ref{thm:deterministic-2} and the Bernoulli inequality, 
		\[
		t \left( 1 - \frac{\langle f, \hat{S} f \rangle_{M_1}}{\|f\|_{M_1}^2} \right) \geq 1 - \frac{\langle f, S f \rangle_{M_1}}{\|f\|_{M_1}^2} - \alpha_t = \frac{\langle f, [(1-\alpha_t) I - S] f \rangle_{M_1} }{\|f\|_{M_1}^2} .
		\]
		Since $\alpha_t \leq (1 - \|S\|_{M_1})/2$, the operator $(1-2\alpha_t) I - S$ is positive semi-definite.
		Then
		\[
		\langle f, [(1-\alpha_t) I - S] f \rangle_{M_1} \geq \langle f, ( I - S ) f \rangle_{M_1} /2.
		\]
		Combining terms, we have
		\[
		\langle f, (I - \hat{S}) f \rangle_{M_1} \geq \frac{\langle f, (I-S) f \rangle_{M_1}}{2t}.
		\]
		Since $f$ is arbitrary and $\|S\|_{M_1} < 1$, the desired result then follows from Theorem A.2 of \cite{caracciolo1990nonlocal} or Lemma 33 of \cite{andrieu18supplement}.
	\end{proof}

	\section{Examples} \label{sec:examples}

	\subsection{Random-walk Metropolis within Gibbs} \label{ssec:met}
	
	In our first example, we compare a random-scan Gibbs sampler to its hybrid version with conditional distributions approximated by a random-walk Metropolis step.
	We first study a concrete example involving Bayesian regression with a spike-and-slab prior.
	Then we discuss a generic scenario where the conditional distributions are in some sense well-conditioned.
	
	{
		\subsubsection{Bayesian regression with a spike-and-slab prior}
		
		Assume that $\ell(\beta)$ is a log likelihood, where $\beta = (\beta_1, \dots, \beta_d) \in \mathbb{R}^d$ is a regression coefficient.
		Impose a continuous spike and slab prior \citep{george1993variable} on $\beta$.
		To be specific, we assume that 
		\[
		\begin{aligned}
			\beta_j \mid z, q &\sim \mbox{N}(0, z_j b_d + (1-z_j) a_d ) \; \text{ independently}, \\
			z_j \mid q & \sim \mbox{Bernoulli}(q)  \; \text{ independently}, \\
			q & \sim F_d,
		\end{aligned}
		\]
		where $a_d$ and $b_d$ are positive hyperparameters such that $a_d \ll b_d$, and $F_d$ is some distribution on $(0,1)$ with density function $f_d$.
		The resultant posterior distribution, which we denote by $\Pi$, has density function proportional to
		\[
		e^{\ell(\beta)} \left[ \prod_{j=1}^d \frac{1}{\sqrt{\tau(z_j)}} \exp \left( - \frac{\beta_j^2}{2\tau(z_j)} \right) q^{z_j} (1-q)^{1-z_j} \right]  f_d(q)
		\]
		for $\beta \in \X_1 := \mathbb{R}^d$, $z \in \X_2 := \{0,1\}^d$, $q \in \X_3 := (0,1)$, where $\tau(z_j) = z_j b_d + (1-z_j) a_d$.
		
		For tractability, assume that $\ell$ is convex and $L_d$-smooth, where $L_d > 0$.
		To be concrete, assume the following:
		\begin{itemize}
			\item [\namedlabel{H1}{(H1)}] The function $\ell$ is continuously twice-differentiable.
			The Hessian matrix of $\ell(\beta)$, $H(\beta)$, satisfies $O_d \preccurlyeq -H(\beta) \preccurlyeq L_d I_d$ for each $\beta \in \mathbb{R}^d$, where $O_d$ is the $d \times d$ zero matrix, $I_d$ is the $d \times d$ identity matrix, and, for two symmetric matrices $A_1$ and $A_2$, $A_1 \preccurlyeq A_2$ means that $A_2 - A_1$ is positive semi-definite.
		\end{itemize}
		Note that \ref{H1} is satisfied if $\ell(\beta)$ corresponds to a linear, logistic, or probit regression model \cite{pratt1981concavity}.

		Consider a random-scan Gibbs sampler targeting $\Pi$.
		There are three full conditional distributions.
		We will let $\Pi_{1, (z,q)}(\cdot)$ be the conditional distribution of $\beta$ given $(z,q)$, and define $\Pi_{2, (\beta, q)}$ and $\Pi_{3, (\beta, z)}$ analogously.
		The second and third full conditional distributions may be sampled from efficiently using rejection sampling, since they are either univariate or a product of univariate distributions.
		On the other hand, it may be costly to sample exactly from $\Pi_{1, (z,q)}(\cdot)$.
		A random walk Metropolis-Hastings step may be used to approximate $\Pi_{1, (z,q)}(\cdot)$, as we now describe:
		\begin{itemize}
			\item Given the current state $\beta$, independently draw $\beta'_j$, $j \in \{1,\dots,d\}$, from the $\mbox{N}(\beta_j, \tau(z_j) \sigma_d^2 )$ distribution, where $\sigma_d > 0$.
			\item The proposed state $\beta'$ is then accepted or rejected based on a Metropolis-Hastings step, ensuring that the transition law is reversible with respect to $\Pi_{1, (z,q)}(\cdot)$.
		\end{itemize}
		Let $Q_{1,(z,q)}$ be the corresponding transition kernel.
		Let $T$ be the Mtk of an exact random-scan Gibbs sampler, i.e.,
		\[
		\begin{aligned}
			T((\beta, z, q), \df (\beta', z', q')) =& p_1 \, \Pi_{1,(z,q)}(\df \beta') \, \delta_{(z,q)}(\df (z',q')) + p_2 \, \Pi_{2, (\beta,q)}(\df z') \, \delta_{(\beta, q)}(\df (\beta',q')) + \\
			& p_3 \,  \Pi_{3, (\beta,z)}(\df q') \, \delta_{(\beta,z)}(\df (\beta',z')),
		\end{aligned}
		\]
		where $(p_1, p_2, p_3)$ is a probability vector.
		Let $\hat{T}$ be the Mtk of the corresponding hybrid Gibbs sampler, with $\Pi_{1,(z,q)}(\df \beta')$ replaced by $Q_{1,(z,q)}(\beta, \df \beta')$.
		Our goal for this example is to place a bound on the ratio $(1-\|\hat{T}\|_{\Pi})/(1-\|T\|_{\Pi})$.

		The following proposition gives a bound on $\|Q_{1,(z,q)}\|_{\Pi_{1,(z,q)}}$.
		The proof, presented in Appendix \ref{app:metro}, is an application of the main result of \citet{andrieu2022explicit}.

		\begin{proposition} \label{pro:metro}
			Suppose that \ref{H1} holds, and that $\sigma_d^2 = 1/[2d (b_d L_d + 1)]$.
			Then
			\[
			\frac{C_0}{d ( b_d L_d + 1)} \leq \frac{\mathcal{E}_{Q_{1,(z,q)}}(g)  }{\|g\|_{\Pi_{1,(z,q)}}^2} \leq 1
			\]
			for $z \in \{0,1\}^d$, $q \in (0,1)$, and $g \in L_0^2(\Pi_{1,(z,q)}) \setminus \{0\}$, where $C_0$ is some positive universal constant.
		\end{proposition}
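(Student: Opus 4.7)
The plan is to reduce the statement to an explicit spectral-gap result for random-walk Metropolis on a strongly log-concave, smooth target, and then quote \cite{andrieu2022explicit}. Conditional on $(z,q)$, the target $\Pi_{1,(z,q)}$ has log-density $\ell(\beta) - \tfrac{1}{2}\sum_j \beta_j^2/\tau(z_j)$ up to an additive constant, and the proposal is Gaussian with covariance $\sigma_d^2 \mathrm{diag}(\tau(z_j))$, so everything is coordinatewise rescaled by $\sqrt{\tau(z_j)}$. The natural first step is therefore to perform the linear change of variables $\tilde{\beta} = D_z^{-1}\beta$, where $D_z = \mathrm{diag}(\sqrt{\tau(z_j)})$. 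In the new coordinates the proposal becomes the isotropic Gaussian $\mathrm{N}(\tilde{\beta},\sigma_d^2 I_d)$, while the pushforward target $\tilde{\Pi}$ has negative-log-density Hessian $I_d - D_z H(D_z \tilde{\beta}) D_z$. Using assumption \ref{H1} and the bound $\tau(z_j) \le b_d$, this Hessian is pinched between $I_d$ and $(1+b_d L_d)I_d$, so $\tilde{\Pi}$ is strongly log-concave with strong-convexity parameter $m=1$ and smoothness parameter $L := 1+b_d L_d$.

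Next, I will observe that the map $g \mapsto g \circ D_z$ is an isometry between $L^2(\Pi_{1,(z,q)})$ and $L^2(\tilde{\Pi})$ which sends $L_0^2$ to $L_0^2$, and it intertwines $Q_{1,(z,q)}$ with the random-walk Metropolis kernel $\tilde{Q}$ targeting $\tilde{\Pi}$ with proposal $\mathrm{N}(\tilde{\beta},\sigma_d^2 I_d)$. Consequently the Dirichlet-form ratio in the proposition equals the corresponding ratio for $\tilde{Q}$, and it suffices to bound
\[
\inf_{\tilde{g} \in L_0^2(\tilde{\Pi})\setminus\{0\}} \frac{\mathcal{E}_{\tilde{Q}}(\tilde{g})}{\|\tilde{g}\|_{\tilde{\Pi}}^2}
\]
from below. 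With $\sigma_d^2 = 1/[2d(b_dL_d+1)] = 1/(2dL)$, the proposal scale sits in the standard regime $\sigma^2 \asymp 1/(dL)$ for which \cite{andrieu2022explicit} supply an explicit, dimension-tracking lower bound on the spectral gap of RWM on any $m$-strongly-log-concave, $L$-smooth target. Invoking that result yields the lower bound $C_0/(dL) = C_0/[d(b_d L_d+1)]$ for some positive universal constant $C_0$.

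For the upper bound $\mathcal{E}_{Q_{1,(z,q)}}(g)/\|g\|^2 \le 1$, I will use the fact that a Metropolis-Hastings kernel with Gaussian random-walk proposal is positive semi-definite on $L_0^2$ (this can be read off \cite{andrieu2022explicit}, or obtained from a direct decomposition of the Gaussian proposal into rank-one pieces), which immediately gives $\langle g, Q_{1,(z,q)} g \rangle_{\Pi_{1,(z,q)}} \ge 0$ and hence $\mathcal{E}_{Q_{1,(z,q)}}(g) \le \|g\|_{\Pi_{1,(z,q)}}^2$. The main obstacle I anticipate is bookkeeping: verifying that the explicit bound extracted from \cite{andrieu2022explicit} really is of order $1/(dL)$ (rather than, say, $1/(dL^2)$), and that the precise choice $\sigma_d^2 = 1/(2dL)$ falls into the range where their theorem produces a nontrivial constant $C_0$ independent of $d$, $b_d$, $L_d$, $z$, and $q$. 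Once this is checked, the rest of the argument is essentially a clean reduction via the rescaling.
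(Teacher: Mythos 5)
Your proposal is correct and follows essentially the same route as the paper: the coordinatewise rescaling by $\sqrt{\tau(z_j)}$ to make the proposal isotropic, the resulting Hessian bounds $I_d \preccurlyeq -\tilde{H} \preccurlyeq (1+b_dL_d)I_d$, the isometry/intertwining argument showing the Dirichlet-form ratio is invariant, and the appeal to Theorem~1 of \cite{andrieu2022explicit} with $\sigma_d^2 = 1/(2dL)$ for the lower bound. The only cosmetic difference is the upper bound's attribution: the paper cites Lemma~3.1 of \cite{baxendale2005renewal} for positive semi-definiteness of the random-walk Metropolis kernel, whereas you propose reading it off \cite{andrieu2022explicit} or a direct decomposition; either works.
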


		Then, by Corollaries \ref{cor:random} and \ref{cor:random-var}, along with Remark \ref{rem:exact}, we have the following bounds.
		\begin{corollary} \label{cor:metro}
			Suppose that \ref{H1} holds, and that $\sigma_d^2 = 1/[2d (b_d L_d + 1)]$.
			Then
			\[
			\frac{C_0}{d ( b_d L_d + 1)} (1-\|T\|_{\Pi}) \leq 1 - \|\hat{T}\|_{\Pi} \leq 1 - \|T\|_{\Pi},
			\]
			\[
			\mbox{var}_T(f) \leq \mbox{var}_{\hat{T}}(f) \leq \frac{d (b_d L_d + 1)}{C_0} \mbox{var}_T(f) + \frac{d(b_d L_d + 1) - C_0}{C_0} \|f\|_{\Pi}^2
			\]
			for $f \in L_0^2(\Pi)$, where $C_0$ is a positive universal constant.
		\end{corollary}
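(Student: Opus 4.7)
The plan is to reduce Corollary \ref{cor:metro} to a direct application of Corollaries \ref{cor:random} and \ref{cor:random-var}, using Proposition \ref{pro:metro} for the $\beta$-update and Remark \ref{rem:exact} for the $z$- and $q$-updates.

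First I would observe that for $i \in \{2, 3\}$ the hybrid sampler draws exactly, so by Remark \ref{rem:exact}, $Q_{2,(\beta,q)}$ and $Q_{3,(\beta,z)}$ have operator norm zero and are trivially positive semi-definite, with Dirichlet ratio identically equal to $1$. For the $\beta$-coordinate, Proposition \ref{pro:metro} supplies
\[
\frac{C_0}{d(b_d L_d + 1)} \leq \frac{\mathcal{E}_{Q_{1,(z,q)}}(g)}{\|g\|_{\Pi_{1,(z,q)}}^2} \leq 1.
\]
Since $Q_{1,(z,q)}$ is self-adjoint, the upper bound of $1$ on the Dirichlet ratio is equivalent to positive semi-definiteness; combining this with the lower bound yields $\|Q_{1,(z,q)}\|_{\Pi_{1,(z,q)}} \leq 1 - C_0/[d(b_d L_d + 1)]$. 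Taking a uniform bound across all three coordinates gives $\|Q_{i,y}\|_{\Pi_{i,y}} \leq C := 1 - C_0/[d(b_d L_d + 1)]$, with each $Q_{i,y}$ positive semi-definite.

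Feeding this into Corollary \ref{cor:random} delivers
\[
(1-C)(1-\|T\|_\Pi) \leq 1-\|\hat T\|_\Pi \leq 1 - \|T\|_\Pi,
\]
where the upper bound is tightened from $(1+C)(1-\|T\|_\Pi)$ to $1-\|T\|_\Pi$ precisely because every $Q_{i,y}$ is positive semi-definite. Substituting $1-C = C_0/[d(b_d L_d + 1)]$ gives the first stated inequality. For the variance claim, the uniform Dirichlet bounds translate to $c_1 = C_0/[d(b_d L_d + 1)]$ and $c_2 = 1$ as inputs to Corollary \ref{cor:random-var}; its conclusions then become $\mbox{var}_T(f) \leq \mbox{var}_{\hat T}(f)$ (since $c_2^{-1}-1 = 0$) and $\mbox{var}_{\hat T}(f) \leq c_1^{-1}\mbox{var}_T(f) + (c_1^{-1}-1)\|f\|_\Pi^2$, which is exactly the second stated inequality after substituting the value of $c_1^{-1}$.

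Since all substantive work is already discharged by Proposition \ref{pro:metro}, I do not anticipate any real obstacle at this stage; the only subtlety worth flagging is that Corollary \ref{cor:random-var} is stated under the hypothesis $\|T\|_\Pi < 1$, which must be either implicit here or handled as a vacuous case (if $\|T\|_\Pi = 1$, then $\mbox{var}_T(f)$ is generally infinite and both inequalities hold trivially).
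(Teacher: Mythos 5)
Your proposal is correct and follows essentially the same route as the paper, which likewise derives Corollary \ref{cor:metro} by feeding Proposition \ref{pro:metro} (for the $\beta$-update) and Remark \ref{rem:exact} (for the exactly-sampled $z$- and $q$-updates) into Corollaries \ref{cor:random} and \ref{cor:random-var}; your observation that the hypothesis $\|T\|_{\Pi}<1$ for the asymptotic-variance bound is left implicit is a fair and accurate reading of the paper as well.
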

		
		Since $T$ is positive semi-definite, it holds that $\mbox{var}_T(f) \geq \|f\|_{\Pi}$ for $f \in L_0^2(\Pi)$.
		Then Corollary \ref{cor:metro} implies that, when $d \to \infty$, $\mbox{var}_{\hat{T}}(f)/ \mbox{var}_T(f) = O(db_d L_d + d)$.
		% Let $b_d$ be independent of~$d$.
		It can be shown that, when $\ell(\beta)$ corresponds to a standard probit or logistic regression model with design matrix $W \in \mathbb{R}^{N_d \times d}$, one may take $L_d = \lambda_{\scriptsize\max}(W^{\top} W)$, where $\lambda_{\scriptsize\max}$ returns the largest eigenvalue of a matrix \cite{bohning1999lower,demidenko2001computational}.
		To estimate the magnitude of $\mbox{var}_{\hat{T}}(f)/ \mbox{var}_T(f)$, we take $b_d = O(1)$, and it is reasonable to assume that $L_d = O(N_d + d)$ \citep{karoui2003largest,johnstone2001distribution}.
		Under these assumptions, $\mbox{var}_{\hat{T}}(f)/ \mbox{var}_T(f) = O(d N_d + d^2)$.

		In cases where it is feasible but costly to implement the exact Gibbs sampler,
		the ratio $\mbox{var}_{\hat{T}}(f)/ \mbox{var}_T(f)$ can be compared to the ratio of the average per-iteration costs of $T$ and $\hat{T}$ to determine the more efficient sampler.
		For illustration, suppose that $\ell(\beta)$ corresponds to a probit regression model based on $N_d$ iid observations, where the cost of evaluating $\ell(\beta)$ scales as $d N_d$.
		Then the cost of drawing from $Q_{1,(z,q)}(\beta, \cdot)$ is on the order of $d N_d$.
		On the other hand, some algorithms allow exact sampling from $\Pi_{1,(z,q)}$ \citep{botev2017normal,durante2019conjugate}, such as Algorithm~1 in \cite{durante2019conjugate}.
		The per-iteration cost of this algorithm after preprocessing appears to scale as $d N_d + d^2$, plus the cost of sampling from an $N_d$-dimensional truncated normal distribution with $N_d$ linear constraints.
		Sampling from the truncated normal distributions can be performed using a rejection sampler, but the cost, denoted by $C(N_d)$, may be prohibitively large for large $N_d$ \cite{chopin2011fast,botev2017normal}.
		The costs of updating $z$ and $q$ are assumed to be negligible compared to the cost of drawing from $Q_{1,(z,q)}(\beta, \cdot)$ or $\Pi_{1,(z,q)}(\cdot)$.
		Thus, the ratio of the average per-iteration costs of $T$ and $\hat{T}$ is $\Omega(1 + d N_d^{-1} + d^{-1} N_d^{-1} C(N_d))$.
		Comparing this to the asymptotic order of $\mbox{var}_{\hat{T}}(f)/ \mbox{var}_T(f)$ in the previous paragraph shows that the Metropolis-within-Gibbs algorithm is more efficient if 
		\[
		C(N_d) \gg d^2 N_d (N_d + d).
		\]
	}
	
	\subsubsection{A generic setting} \label{sssec:generic}
	
	More broadly speaking, Assume that $\Pi$ is a distribution on $\X = \X_1 \times \cdots \times \X_n$, and that $\X_i = \mathbb{R}^{d_i}$, where each $d_i$ is a positive integer.
	Assume that $\Pi$ has a density function proportional to $x \mapsto e^{-\xi(x)}$ for some function $\xi: \X\to \mathbb{R}$.
	For $i \in \{1,\dots,n\}$ and $y \in \X_{-i}$, define $\xi_{i,y}: \mathbb{R}^{d_i} \to \mathbb{R}$ to be such that $\xi_{i,x_{-i}}(x_i) = \xi(x)$ for $x \in \X$.
	Then $z \mapsto \xi_{i,y}(z)$ gives the log density function of the conditional distribution $\Pi_{i,y}$.
	
	We now consider exact and hybrid random-scan Gibbs samplers targeting~$\Pi$.
	We shall make the following assumptions regarding the conditional distributions.
	\begin{itemize}
		\item [\namedlabel{H2}{(H2)}] 
		\begin{enumerate}
			\item There exists $k \in \{1,\dots,d\}$ such that, for $i \leq k$ and $y \in \X_{-i}$, the log density $\xi_{i,y}$ is well conditioned in the following sense:
			there exist positive numbers $m_{i,y}$ and $L_{i,y}$ such that, for $z, u \in \mathbb{R}^{d_i}$,
			\begin{equation} \nonumber
				\frac{m_{i,y}}{2} \|u\|^2 \leq \xi_{i,y}(z + u) - \xi_{i,y}(z) - \langle \nabla \xi_{i,y}(z), u \rangle \leq \frac{L_{i,y}}{2} \|u\|^2,
			\end{equation}
			where $\|\cdot\|$ is the Euclidean norm, and $\langle u, v \rangle = u^{\top} v$ for $u, v \in \mathbb{R}^{d_i}$;
			in other words, $\xi_{i,y}(\cdot)$ is $m_{i,y}$-strongly convex and $L_{i,y}$-smooth.
			\item For $i > k$ and $y \in \X_{-i}$ , one can make exact draws from $\Pi_{i,y}$ efficiently.
			Thus, these conditional distributions are not approximated even in the hybrid sampler.
		\end{enumerate}
	\end{itemize}
	In the hybrid algorithm, we use the following random-walk Metropolis algorithm to approximate $\Pi_{i,y}$ when $i \leq k$.
	Given the current state $z \in \mathbb{R}^{d_i}$, propose a new state by drawing from the normal distribution with mean~$z$ and covariance matrix $\sigma_{i,y}^2 I_{d_i}$, where $\sigma_{i,y} > 0$, then accept or reject the proposed state through a Metropolis step.
	Let $Q_{i,y}$ be the Mtk of this random-walk Metropolis algorithm.
	
	Theorem~1 of \cite{andrieu2022explicit} along with Lemma 3.1 of \cite{baxendale2005renewal} implies the following:
	\begin{proposition} \label{pro:example-random}
		Suppose that \ref{H2} holds.
		Let $i \in \{1,\dots,k\}$ and let $y \in \X_{-i}$.
		Set $\sigma_{i,y}^2 = 2^{-1} L_{i,y}^{-1} d_i^{-1}$.
		Then
		\[
		\frac{C_0 m_{i,y}}{d_i L_{i,y}} \leq \frac{\mathcal{E}_{Q_{i,y}}(g)}{\|g\|_{\Pi_{i,y}}^2 } \leq 1
		\]
		for $g \in L_0^2(\Pi_{i,y}) \setminus \{0\}$, where $C_0$ is a universal constant.
	\end{proposition}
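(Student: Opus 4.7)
My plan is to derive the claimed sandwich by invoking two external tools in succession: an explicit spectral gap bound for random-walk Metropolis on strongly log-concave targets, and a structural lemma that turns a one-sided gap bound into the two-sided Dirichlet-form estimate needed to feed Theorem~\ref{thm:random}. Fix $i \leq k$ and $y \in \X_{-i}$, and abbreviate $m = m_{i,y}$, $L = L_{i,y}$, $d = d_i$, $Q = Q_{i,y}$, $\Pi_0 = \Pi_{i,y}$. Under~\ref{H2} the target $\Pi_0$ is $m$-strongly log-concave and $L$-smooth on $\mathbb{R}^d$, and the prescribed proposal variance $\sigma_{i,y}^2 = (2Ld)^{-1}$ is precisely the order-$1/(Ld)$ step size to which the bounds of \cite{andrieu2022explicit} are tuned.

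For the lower bound I would apply Theorem~1 of \cite{andrieu2022explicit} directly to $Q$. That theorem furnishes an explicit lower bound on the spectral gap of random-walk Metropolis in the strongly log-concave, Lipschitz-gradient regime, and specializing to the chosen step size gives $1 - \|Q\|_{\Pi_0} \geq C_0 \, m/(dL)$ for some universal constant $C_0$. Because $Q$ is $\Pi_0$-reversible, the first equality in \eqref{eq:norm-def} translates this into $\inf_{g \in L_0^2(\Pi_0) \setminus \{0\}} \mathcal{E}_Q(g)/\|g\|_{\Pi_0}^2 \geq 1 - \|Q\|_{\Pi_0} \geq C_0 \, m/(dL)$, which is exactly the left-hand inequality of the claim.

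The upper bound $\mathcal{E}_Q(g)/\|g\|_{\Pi_0}^2 \leq 1$ is equivalent, by the definition of the Dirichlet form, to $\langle g, Q g\rangle_{\Pi_0} \geq 0$ for every $g \in L_0^2(\Pi_0)$, i.e., to positive semi-definiteness of $Q$ as a self-adjoint operator on $L_0^2(\Pi_0)$. I expect this to be the main obstacle, because random-walk Metropolis kernels are not generically positive semi-definite: at small step sizes the rejection mass on the diagonal is thin, so the negative spectrum can in principle be nontrivial. To handle it I would appeal to Lemma~3.1 of \cite{baxendale2005renewal}, which controls the negative part of the spectrum of a reversible Metropolis-Hastings kernel in terms of structural features (rejection probability, symmetric proposal, and target). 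Applying it to the Gaussian-proposal RWM kernel $Q$ yields $\langle g, Qg \rangle_{\Pi_0} \geq 0$, i.e., $\sup_{g \in L_0^2(\Pi_0) \setminus \{0\}}\mathcal{E}_Q(g)/\|g\|_{\Pi_0}^2 \leq 1$. Combining this with the previous paragraph proves the proposition.
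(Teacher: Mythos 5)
Your proof is correct and follows essentially the same route as the paper, which derives this proposition directly from Theorem~1 of \cite{andrieu2022explicit} (for the lower bound on the Dirichlet-form ratio via the spectral gap) together with Lemma~3.1 of \cite{baxendale2005renewal} (for positive semi-definiteness of the Gaussian-proposal random-walk Metropolis kernel, which gives the upper bound of~1); the same two-step argument appears explicitly in the paper's proof of the analogous Proposition~\ref{pro:metro}. The only small inaccuracy is your worry that positive semi-definiteness might fail at small step sizes: for a Gaussian (or any positive-definite-function) proposal, Baxendale's lemma gives positivity unconditionally, so this step is not actually delicate.
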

	
	Define $T$ and $\hat{T}$ as in \eqref{eq:T} and \eqref{eq:That}.
	Since one can  sample from $\Pi_{i,y}$ efficiently for each $y \in \X_{-i}$ when $i > k$, for these values of $(i,y)$, $Q_{i,y}(z, \cdot)$ is set to $\Pi_{i,y}(\cdot)$ for $z \in \mathbb{R}^{d_i}$.
	Using Corollary \ref{cor:random} along with Proposition \ref{pro:example-random} and recalling Remark \ref{rem:exact}, we derive the following.
	\begin{proposition}
		Suppose that \ref{H2} holds.
		For $i \in \{1,\dots,k\}$ and $y \in \X_{-i}$, set $\sigma_{i,y}^2 = 2^{-1} L_{i,y}^{-1} d_i^{-1}$.
		Then
		\[
		\left( \min_{i \leq k} \inf_{y \in \X_{-i}} \frac{C_0 m_{i,y}}{d_i L_{i,y}} \right) (1 - \|T\|_{\Pi}) \leq 1 - \|\hat{T}\|_{\Pi} \leq 1 - \|T\|_{\Pi}.
		\]
	\end{proposition}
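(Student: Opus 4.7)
The plan is to derive the claimed bounds as a direct application of Theorem~\ref{thm:random}, combined with Proposition~\ref{pro:example-random} (for the Metropolis-updated coordinates) and Remark~\ref{rem:exact} (for the exactly-updated coordinates).

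First, I would verify uniform Dirichlet-form bounds of the type required by Theorem~\ref{thm:random}. For $i \leq k$ and $y \in \X_{-i}$, Proposition~\ref{pro:example-random} gives
\[
\frac{C_0 m_{i,y}}{d_i L_{i,y}} \leq \frac{\mathcal{E}_{Q_{i,y}}(g)}{\|g\|_{\Pi_{i,y}}^2} \leq 1
\]
for every $g \in L_0^2(\Pi_{i,y}) \setminus \{0\}$. For $i > k$ and $y \in \X_{-i}$, the hybrid sampler coincides with the exact one, so by Remark~\ref{rem:exact}, $\mathcal{E}_{Q_{i,y}}(g) = \|g\|_{\Pi_{i,y}}^2$, meaning the ratio is exactly one. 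Taking
\[
c_1 = \min_{i \leq k} \inf_{y \in \X_{-i}} \frac{C_0 m_{i,y}}{d_i L_{i,y}}, \qquad c_2 = 1,
\]
the hypothesis \eqref{ine:Qiyi-bound} of Theorem~\ref{thm:random} is satisfied across all coordinates (note $c_1 \leq 1$ automatically). Applying Theorem~\ref{thm:random} yields $c_1 \, \mathcal{E}_{T}(f) \leq \mathcal{E}_{\hat{T}}(f) \leq \mathcal{E}_{T}(f)$ for every $f \in L_0^2(\Pi)$.

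Next, I would convert these Dirichlet-form comparisons into the desired spectral-gap bounds via the variational identity~\eqref{eq:norm-def}. Since $T$ is positive semi-definite (as noted in Section~\ref{sec:main}), $\sup_{f} \mathcal{E}_T(f)/\|f\|_\Pi^2 \leq 1$, and the upper bound $\mathcal{E}_{\hat T}(f) \leq \mathcal{E}_T(f)$ transfers this to $\sup_f \mathcal{E}_{\hat T}(f)/\|f\|_\Pi^2 \leq 1$. Consequently, in~\eqref{eq:norm-def}, the term $2 - \sup_f \mathcal{E}_{\hat T}(f)/\|f\|_\Pi^2 \geq 1$ dominates the infimum term, so $1 - \|\hat T\|_\Pi = \inf_f \mathcal{E}_{\hat T}(f)/\|f\|_\Pi^2$, and likewise $1 - \|T\|_\Pi = \inf_f \mathcal{E}_T(f)/\|f\|_\Pi^2$. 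The chain of Dirichlet-form inequalities then gives
\[
c_1 (1-\|T\|_\Pi) \leq \inf_f \frac{\mathcal{E}_{\hat T}(f)}{\|f\|_\Pi^2} \leq \inf_f \frac{\mathcal{E}_T(f)}{\|f\|_\Pi^2} = 1 - \|T\|_\Pi,
\]
where the leftmost inequality uses that for any minimizing sequence $\{f_n\}$ of $\mathcal{E}_T(\cdot)/\|\cdot\|_\Pi^2$, the ratio $\mathcal{E}_{\hat T}(f_n)/\|f_n\|_\Pi^2 \geq c_1 \mathcal{E}_T(f_n)/\|f_n\|_\Pi^2 \to c_1(1 - \|T\|_\Pi)$. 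This gives exactly the stated double inequality.

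Honestly, there is no real obstacle here: the result is a straightforward specialization of Corollary~\ref{cor:random} with the sharper upper bound $c_2 = 1$, which is available because Proposition~\ref{pro:example-random} supplies a Dirichlet-form upper bound of $1$ rather than relying on positive semi-definiteness of $Q_{i,y}$. The only care needed is to note that coordinates with $i > k$ contribute the trivial ratio equal to $1$, so the minimization in $c_1$ may be restricted to $i \leq k$ without weakening the bound.
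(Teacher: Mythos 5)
Your proof is correct and follows essentially the same route as the paper, which simply cites Corollary~\ref{cor:random} together with Proposition~\ref{pro:example-random} and Remark~\ref{rem:exact}; your choice to invoke Theorem~\ref{thm:random} directly with $c_2=1$ (rather than passing through the positive semi-definiteness clause of Corollary~\ref{cor:random}) is an equivalent phrasing of the same argument, since the Dirichlet-ratio upper bound of $1$ is exactly positive semi-definiteness. The bookkeeping for the exactly-updated coordinates $i>k$ and the conversion to spectral gaps via \eqref{eq:norm-def} are both handled correctly.
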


	\subsection{Random-scan Gibbs samplers with blocked updates} \label{ssec:hie}
	
	When using a random-scan Gibbs sampler to draw from an $n$-component distribution $\Pi$, sometimes it may be possible to update multiple components using their joint conditional distribution at once.
	To define such an algorithm, we need some notations.
	For $\Lambda \subset [n] := \{1,\dots,n\}$ such that $1 \leq |\Lambda| \leq n-1$ and $(x_1, \dots, x_n) \in \X$, let $x_{\Lambda} = (x_i)_{i \in \Lambda}$, and let $x_{-\Lambda} = (x_i)_{i \not\in \Lambda}$.
	Let $M_{-\Lambda}$ be the marginal distribution of $X_{-\Lambda}$ where $X \sim \Pi$, and let $\Pi_{\Lambda, y}$ be the conditional distribution of $X_{\Lambda}$ given $X_{-\Lambda} = y$ for $y \in \X_{-\Lambda} = \{x_{-\Lambda}: \, x \in \X\}$.
	Consider a random-scan Gibbs sampler that updates $\ell$ components in one iteration, where $\ell \in [n-1]$.
	To be specific, in each iteration, given the current state $x$, a subset $\Lambda \subset [n]$ whose cardinality is $\ell$ is randomly and uniformly selected, and $x_{\Lambda}$ is updated according to the conditional distribution $\Pi_{\Lambda, x_{-\Lambda}}$.
	The corresponding Mtk is
	\[
	T_{\ell}(x, \df x') = \frac{1}{{n \choose \ell}} \sum_{\Lambda \subset [n]} \ind(|\Lambda| = \ell) \,  \Pi_{\Lambda, x_{-\Lambda}}(\df x'_{\Lambda}) \, \delta_{x_{-\Lambda}} (\df x'_{-\Lambda}),
	\]
	where $\ind(|\Lambda| = \ell)$ is 1 if $|\Lambda| = \ell$ and 0 otherwise.
	One can check that $T_{\ell}$ defines a positive semi-definite operator on $L_0^2(\Pi)$.
	Let $\ell$ and $\emm$ be integers such that $1 \leq \emm < \ell \leq n-1$.
	A natural question is whether there is a quantitative relationship between the spectral gaps of $T_{\ell}$ and $T_{\emm}$.
	
	To give a partial answer to the question, we demonstrate that $T_{\emm}$ can be seen as a hybrid version of $T_{\ell}$, and the Markovian approximation step corresponds to yet another random-scan Gibbs sampler.
	(This observation is not new, and can be traced back to at least \cite{roberts1997geometric}.)
	For $\Lambda \subset [n]$ such that $|\Lambda| = \ell$ and $y \in \X_{-\Lambda}$, let $Q_{\Lambda, y}$ be an Mtk on $\X_{\Lambda}$ satisfying
	\[
	Q_{\Lambda, x_{-\Lambda}}(x_{\Lambda}, \df x_{\Lambda}') = \frac{1}{{\ell \choose \emm}} \sum_{\Gamma \subset \Lambda} \ind(|\Gamma| = \emm) \, \Pi_{\Gamma, x_{-\Gamma}}(\df x_{\Gamma}') \, \delta_{x_{\Lambda \setminus \Gamma}}(\df x_{\Lambda \setminus \Gamma}').
	\]
	To sample from $Q_{\Lambda, x_{-\Lambda}}(x_{\Lambda}, \cdot)$, one randomly and uniformly selects a subset $\Gamma \subset \Lambda$ such that $|\Gamma| = \emm$, and draw from the conditional distribution of $X_{\Gamma}$ given $X_{-\Gamma} = x_{-\Gamma}$, i.e., given $X_{-\Lambda} = x_{-\Lambda}$ and $X_{\Lambda \setminus \Gamma} = x_{\Lambda \setminus \Gamma}$.
	Observe that, for $M_{-\Lambda}$-a.e. $y \in \X_{-\Lambda}$, $Q_{\Lambda, y}$ can be viewed as the Mtk of a random-scan Gibbs sampler targeting the $\ell$-component distribution $\Pi_{\Lambda, y}$ that updates $\emm$ components in each step.
	In particular, $Q_{\Lambda, y}$ can be viewed as a positive semi-definite operator on $L_0^2(\Pi_{\Lambda, y})$.
	
	It is straightforward to derive that
	\[
	\begin{aligned}
		T_{\emm}(x, \df x') &= \frac{1}{{n \choose \emm}} \sum_{\Gamma \subset [n]} \ind(|\Gamma| = \emm) \,  \Pi_{\Gamma, x_{-\Gamma}}(\df x'_{\Gamma}) \, \delta_{x_{-\Gamma}} (\df x'_{-\Gamma}) \\
		&= \frac{1}{{n \choose \ell}} \sum_{\Lambda \subset [n]} \ind(|\Lambda| = \ell) \, \frac{1}{{\ell \choose \emm}} \sum_{\Gamma \subset \Lambda}  \ind(|\Gamma| = \emm) \, \Pi_{\Gamma, x_{-\Gamma}}(\df x_{\Gamma}') \, \delta_{x_{-\Gamma}} (\df x'_{-\Gamma}) \\
		&= \frac{1}{{n \choose \ell}} \sum_{\Lambda \subset [n]} \ind(|\Lambda| = \ell) \, Q_{\Lambda, x_{-\Lambda}}(x_{\Lambda}, \df x_{\Lambda}') \, \delta_{x_{-\Lambda}}(\df x_{-\Lambda}').
	\end{aligned}
	\]
	Note that $T_{\emm}(x, \df x')$ is $T_{\ell}(x, \df x')$ with the conditional distribution $\Pi_{\Lambda, x_{-\Lambda}}(\df x'_{\Lambda})$ replaced by the Markovian approximation $Q_{\Lambda, x_{-\Lambda}}(x_{\Lambda}, \df x_{\Lambda}')$.
	It is in this sense that $T_{\emm}$ is a hybrid version of $T_{\ell}$.
	
	In the same way that Theorem \ref{thm:random} is proved, we can establish the following relationship between $T_{\ell}$ and $T_{\emm}$ in terms of the Markovian approximation $Q_{\Lambda, y}$.
	\begin{proposition} \label{pro:block}
		Suppose that there exist constants $c_1 \in [0,1]$ such that
		\begin{equation} \label{ine:Dirichlet-block}
			c_1 \leq \frac{\mathcal{E}_{Q_{\Lambda,y}}(g) }{\|g\|_{\Pi_{\Lambda,y}}^2 }
		\end{equation}
		for $\Lambda \subset [n]$ such that $|\Lambda| = \ell$, $M_{-\Lambda}$-a.e. $y \in \X_{-\Lambda}$, and $g \in L_0^2(\Pi_{\Lambda, y}) \setminus \{0\}$.
		Then, for $f \in L_0^2(\Pi)$,
		\[
		c_1 \mathcal{E}_{T_{\ell}}(f) \leq \mathcal{E}_{T_{\emm}}(f) \leq  \mathcal{E}_{T_{\ell}}(f).
		\]
	\end{proposition}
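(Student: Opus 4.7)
The plan is to mirror the proof of Theorem \ref{thm:random} verbatim, replacing single-coordinate updates by block updates indexed by subsets $\Lambda \subset [n]$ with $|\Lambda| = \ell$. The key structural observation is that $T_\ell$ has exactly the form of $T$ in \eqref{eq:T} if we reinterpret the ``coordinates'' as the blocks $\Lambda$ (each selected with probability $p_\Lambda = 1/\binom{n}{\ell}$), and $T_\emm$ has exactly the form of $\hat{T}$ in \eqref{eq:That} with the exact conditional $\Pi_{\Lambda, x_{-\Lambda}}$ replaced by the Markovian approximation $Q_{\Lambda, x_{-\Lambda}}$. The disintegration $\Pi(\df x) = M_{-\Lambda}(\df x_{-\Lambda}) \, \Pi_{\Lambda, x_{-\Lambda}}(\df x_\Lambda)$ still holds at the block level, so the machinery of Theorem \ref{thm:random} carries over directly.

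Concretely, for $f \in L_0^2(\Pi)$ and each subset $\Lambda$ with $|\Lambda| = \ell$, I would define $f_{\Lambda, y}: \X_\Lambda \to \mathbb{R}$ by $f_{\Lambda, x_{-\Lambda}}(x_\Lambda) = f(x)$, which for $M_{-\Lambda}$-a.e.\ $y$ lies in $L^2(\Pi_{\Lambda, y})$ by Fubini. Then, as in \eqref{eq:Thatff-1} and \eqref{eq:Dirichlet-T}, I would establish the two identities
\begin{equation} \nonumber
\mathcal{E}_{T_\emm}(f) = \frac{1}{\binom{n}{\ell}} \sum_{\Lambda: |\Lambda| = \ell} \int_{\X_{-\Lambda}} M_{-\Lambda}(\df y) \, \mathcal{E}_{Q_{\Lambda, y}}(f_{\Lambda, y} - \Pi_{\Lambda, y} f_{\Lambda, y}),
\end{equation}
\begin{equation} \nonumber
\mathcal{E}_{T_\ell}(f) = \frac{1}{\binom{n}{\ell}} \sum_{\Lambda: |\Lambda| = \ell} \int_{\X_{-\Lambda}} M_{-\Lambda}(\df y) \, \|f_{\Lambda, y} - \Pi_{\Lambda, y} f_{\Lambda, y}\|_{\Pi_{\Lambda, y}}^2.
\end{equation}
The first uses the definition of $\hat{T}$-style Dirichlet form together with the fact that $Q_{\Lambda, y}$ is $\Pi_{\Lambda, y}$-reversible; the second uses that $\Pi_{\Lambda, y}$ acts as an orthogonal projection onto constants inside $L^2(\Pi_{\Lambda, y})$.

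For the lower bound, I would apply the hypothesis \eqref{ine:Dirichlet-block} pointwise with $g = f_{\Lambda, y} - \Pi_{\Lambda, y} f_{\Lambda, y}$ and integrate, which immediately yields $\mathcal{E}_{T_\emm}(f) \geq c_1 \mathcal{E}_{T_\ell}(f)$. For the upper bound, I would use the fact noted in the paragraph preceding the proposition that, for $M_{-\Lambda}$-a.e.\ $y$, $Q_{\Lambda, y}$ is itself a random-scan block Gibbs operator on $L_0^2(\Pi_{\Lambda, y})$ and hence positive semi-definite (as a mixture of orthogonal projections), so $\mathcal{E}_{Q_{\Lambda, y}}(g) \leq \|g\|_{\Pi_{\Lambda, y}}^2$ for every $g \in L_0^2(\Pi_{\Lambda, y})$; integrating gives $\mathcal{E}_{T_\emm}(f) \leq \mathcal{E}_{T_\ell}(f)$.

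I do not anticipate a genuine obstacle: both the disintegration and the positive semi-definiteness of $Q_{\Lambda, y}$ are already established in the surrounding text, and the Dirichlet-form decomposition is purely bookkeeping. The only mild subtlety is verifying measurability of $y \mapsto Q_{\Lambda, y}(x_\Lambda, A)$, which is automatic here since $Q_{\Lambda, y}$ is defined as an explicit finite mixture of exact conditional kernels of $\Pi$.
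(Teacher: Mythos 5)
Your proposal is correct and follows essentially the same route as the paper: the paper's proof defines $f_{\Lambda,y}$ exactly as you do, establishes the same two block-level analogues of \eqref{eq:Thatff-1} and \eqref{eq:Dirichlet-T}, and concludes via \eqref{ine:Dirichlet-block} together with the positive semi-definiteness of $Q_{\Lambda,y}$ for the upper bound. No gaps.
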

	
	\begin{proof}
		Let $f \in L_0^2(\Pi)$.
		For $\Lambda \subset [n]$ and $y \in \X_{-\Lambda}$, let $f_{\Lambda, y}: \X_{\Lambda} \to \mathbb{R}$ be such that $f_{\Lambda, x_{-\Lambda}}(x_{\Lambda}) = f(x)$ for $x \in \X$.
		Then, one can show that, for $M_{-\Lambda}$-a.e. $y \in \X_{-\Lambda}$, it holds that $f_{\Lambda, y} - \Pi_{\Lambda,y} f_{\Lambda, y} \in L_0^2(\Pi_{\Lambda, y})$.
		
		Analogous to \eqref{eq:Thatff-1} and \eqref{eq:Dirichlet-T},
		\[
		\begin{aligned}
			\mathcal{E}_{T_{\emm}}(f) &= \frac{1}{{n \choose \ell}} \sum_{\Lambda \subset [n]} \ind(|\Lambda| = \ell) \, \int_{\X_{-\Lambda}} M_{-\Lambda}(\df x_{-\Lambda}) \, \mathcal{E}_{Q_{\Lambda, x_{-\Lambda}}}(f_{\Lambda, x_{-\Lambda}} - \Pi_{\Lambda, x_{-\Lambda}} f_{\Lambda, x_{-\Lambda}}), \\
			\mathcal{E}_{T_{\ell}}(f) &= \frac{1}{{n \choose \ell}} \sum_{\Lambda \subset [n]} \ind(|\Lambda| = \ell) \, \int_{\X_{-\Lambda}} M_{-\Lambda}(\df x_{-\Lambda}) \, \| f_{\Lambda, x_{-\Lambda}} - \Pi_{\Lambda,x_{-\Lambda}} f_{\Lambda, x_{-\Lambda}} \|_{\Pi_{\Lambda, x_{-\Lambda}} }^2.
		\end{aligned}
		\]
		The desired result then follows from \eqref{ine:Dirichlet-block} and the fact that $Q_{\Lambda, y}$ is positive semi-definite.
	\end{proof}
	
	The following corollary is also easy to derive.
	\begin{corollary} \label{cor:block}
		Suppose that there exist constants $c_1 \in [0,1]$ such that \eqref{ine:Dirichlet-block} holds for $\Lambda \subset [n]$ such that $|\Lambda| = \ell$, $M_{-\Lambda}$-a.e. $y \in \X_{-\Lambda}$, and $f \in L_0^2(\Pi_{\Lambda, y}) \setminus \{0\}$.
		Then
		\[
		c_1 (1 - \|T_{\ell}\|_{\Pi}) \leq 1 - \|T_{\emm}\|_{\Pi} \leq 1 - \|T_{\ell}\|_{\Pi}.
		\]
		Moreover, if $\|T_{\ell}\|_{\Pi} < 1$ and $c_1 > 0$, then for $f \in L_0^2(\Pi)$, 
		\[
		\mbox{var}_{T_{\ell}}(f) \leq \mbox{var}_{T_{\emm}}(f) \leq c_1^{-1} \mbox{var}_{T_{\ell}}(f) + (c_1^{-1} - 1) \|f\|_{\Pi}^2.
		\]
	\end{corollary}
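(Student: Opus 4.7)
The plan is to derive Corollary \ref{cor:block} directly from Proposition \ref{pro:block} together with the variational characterizations in \eqref{eq:norm-def} and the asymptotic variance identity quoted from \cite{caracciolo1990nonlocal} in Section \ref{sec:preliminaries}. The key observation is that both $T_{\ell}$ and $T_{\emm}$ are positive semi-definite self-adjoint operators on $L_0^2(\Pi)$, so their norms can be read off purely from the Dirichlet form via $1 - \|T\|_{\Pi} = \inf_{f \neq 0} \mathcal{E}_{T}(f)/\|f\|_{\Pi}^2$; this removes the $2 - \sup$ term that appears in \eqref{eq:norm-def} for general self-adjoint operators.

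For the spectral gap inequality, I would simply divide the conclusion of Proposition \ref{pro:block} by $\|f\|_{\Pi}^2$ for each nonzero $f \in L_0^2(\Pi)$, take the infimum over $f$, and invoke the positive semi-definiteness of both operators to conclude
\[
c_1 (1 - \|T_{\ell}\|_{\Pi}) \leq 1 - \|T_{\emm}\|_{\Pi} \leq 1 - \|T_{\ell}\|_{\Pi}.
\]
No other ingredient is needed here beyond bookkeeping with \eqref{eq:norm-def}.

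For the asymptotic variance claim, recall from Section \ref{sec:preliminaries} that, when $\|K\|_{\Pi} < 1$, one has $\mbox{var}_K(f) = 2 \langle f, (I-K)^{-1} f\rangle_{\Pi} - \|f\|_{\Pi}^2$ for $f \in L_0^2(\Pi)$. Since the Dirichlet form of a positive semi-definite self-adjoint operator $K$ equals $\langle f, (I-K) f\rangle_{\Pi}$, Proposition \ref{pro:block} can be restated as the two operator inequalities $c_1 (I - T_{\ell}) \preccurlyeq I - T_{\emm} \preccurlyeq I - T_{\ell}$ in the quadratic form sense on $L_0^2(\Pi)$. Under the assumptions $\|T_{\ell}\|_{\Pi} < 1$ and $c_1 > 0$, both $I - T_{\ell}$ and $I - T_{\emm}$ are bounded and boundedly invertible, so inverting the quadratic form inequalities (using monotonicity of the inverse on positive operators) yields $(I-T_{\ell})^{-1} \preccurlyeq (I-T_{\emm})^{-1} \preccurlyeq c_1^{-1} (I-T_{\ell})^{-1}$. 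Plugging these into the asymptotic variance identity produces exactly the two claimed bounds after simplification. One may equivalently cite Theorem A.2 of \cite{caracciolo1990nonlocal} or Lemma 33 of \cite{andrieu18supplement}, as is done for Corollary \ref{cor:random-var}, to sidestep the explicit manipulation.

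The only delicate point is justifying the passage from quadratic form inequalities to inequalities for the inverses, which requires that both $I - T_{\ell}$ and $I - T_{\emm}$ have spectra bounded away from zero. The lower bound $1 - \|T_{\emm}\|_{\Pi} \geq c_1(1-\|T_{\ell}\|_{\Pi}) > 0$ established in the first part of the corollary takes care of this automatically, so invoking the cited variance lemmas is clean and no genuine technical obstacle remains.
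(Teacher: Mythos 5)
Your proposal is correct and follows exactly the route the paper intends: the spectral gap bounds come from dividing Proposition~\ref{pro:block} by $\|f\|_{\Pi}^2$ and using the variational characterization \eqref{eq:norm-def} together with positive semi-definiteness of $T_{\ell}$ and $T_{\emm}$, and the asymptotic variance bounds follow from the same Dirichlet form comparison via Theorem A.2 of \cite{caracciolo1990nonlocal} or Lemma 33 of \cite{andrieu18supplement}, mirroring Corollaries~\ref{cor:random} and~\ref{cor:random-var}. Your remark that the lower bound $1-\|T_{\emm}\|_{\Pi} \geq c_1(1-\|T_{\ell}\|_{\Pi}) > 0$ guarantees invertibility of $I - T_{\emm}$ is the right way to close the only delicate point.
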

	
	Corollary \ref{cor:block} extends Theorem 2 of \cite{qin2023spectral}, which gives the relation for $\ell = n-1$.
	The latter was used to extend the spectral independence technique, a popular tool for analyzing the convergence properties of Glauber dynamics on discrete spaces \cite{anari2021spectral,feng2022rapid,chen2021rapid,jain2021spectral}, to general state spaces.

	\subsection{A hybrid slice sampler} \label{ssec:sli}
	
	In this subsection, we use Theorem \ref{thm:deterministic-2} to reproduce and slightly improve a recent development in the analysis of hybrid slice samplers.
	
	Consider the setting of the data augmentation algorithm.
	Let $\X_1$ be a non-empty open subset of $\mathbb{R}^d$, where $d$ is a positive integer.
	Suppose that the distribution of interest, $M_1$, has a possibly un-normalized density function $\tilde{m}: \X_1 \to (0,\infty)$ with respect the $d$-dimensional Borel measure.
	Let $\|\tilde{m}\|_{\infty}$ be the essential supremum of $\tilde{m}$ with respect to the Borel measure, and assume that it is finite.
	Set $\X_2 = (0, \|\tilde{m}\|_{\infty} )$, and, for $y \in \X_1$, let $\Pi_{2,y}$ be the uniform distribution on $(0, \tilde{m}(y))$.
	Then $\Pi$ is the uniform distribution on
	\[
	G = \{(y,z) \in \X_1 \times \X_2: \, z < \tilde{m}(y) \}.
	\]
	In this case, the distribution $M_2$ has density function proportional to
	\[
	v(z) = \int_{\X_1} \ind(y \in G_z) \, \df y \text{ where } G_z = \{y \in \X_1: \, z < \tilde{m}(y)\}, \quad z \in \X_2.
	\]
	Here, $G_z$ is called a "level set."
	For $z \in (0,\|\tilde{m}\|_{\infty})$, one can take $\Pi_{1,z}$ to be the uniform distribution on $G_z$.
	The data augmentation algorithm, which has Mtk $S(y, \df y') = \int_0^{\|\tilde{m}\|_{\infty}} \Pi_{2, y}(\df z) \, \Pi_{1,z}(\df y')$, then corresponds to the well-known slice sampler \citep[see, e.g.,][]{neal2003slice}.
	In each iteration, given the current state $y$, one draws $z$ from the uniform distribution on $(0, \|\tilde{m}\|_{\infty})$, and then draws the next state $y'$ from the uniform distribution on $G_z$.
	
	There have been a number of works on the convergence properties of the slice sampler \citep{roberts1999convergence,mira2002efficiency,natarovskii2021quantitative}.
	For instance, the following was established in \cite{natarovskii2021quantitative}.
	
	\begin{proposition} \citep[][Theorem 3.10, Proposition 3.13]{natarovskii2021quantitative}
		Suppose that $v$ is continuously differentiable on the bounded interval $(0, \|\tilde{m}\|_{\infty})$ with $v'(z) < 0$.
		Suppose further that, for some positive integer~$k$, the function $z \mapsto z v'(z) / v(z)^{1-1/k}$ is decreasing on $(0, \|\tilde{m}\|_{\infty})$.
		Then
		\[
		1 - \|S\|_{M_1} \geq \frac{1}{k+1}.
		\]
	\end{proposition}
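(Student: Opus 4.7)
The plan is to reduce the spectral gap of $S$ to a Poincar\'e inequality for a spectrally equivalent one-dimensional chain on the level coordinate $z$, then to exploit the structural hypothesis on $v$ to extract the sharp constant $k+1$.

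First, I would invoke the standard data-augmentation identity $\|S\|_{M_1} = \|S^\ast\|_{M_2}$, where $S^\ast(z, \df z') = \int_{\X_1} \Pi_{1,z}(\df y) \, \Pi_{2,y}(\df z')$ is the dual chain on $\X_2$, reversible with respect to $M_2$ (whose density is proportional to $v$). A direct calculation gives
\[
v(z) \, s^\ast(z, z') = \int_{G_{z \vee z'}} \tilde{m}(y)^{-1} \, \df y,
\]
which via the layer-cake formula and integration by parts simplifies to $-\int_{z \vee z'}^{\|\tilde{m}\|_{\infty}} v'(s)/s \, \df s$. This shows that $S^\ast$ has uniform downward transitions on $(0, z)$ and upward transitions whose density decreases in $z'$ at a rate governed by $v'/s$, and it yields an explicit weighted-integral representation of $\mathcal{E}_{S^\ast}(f)$ over $\{z < z'\}$.

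Next, by the variational characterization \eqref{eq:norm-def}, it suffices to establish the one-dimensional Poincar\'e inequality
\[
\|f - M_2 f\|_{M_2}^2 \leq (k+1) \, \mathcal{E}_{S^\ast}(f) \qquad \text{for all } f \in L^2(M_2).
\]
The hypothesis that $z \mapsto z v'(z)/v(z)^{1-1/k}$ is decreasing is equivalent to $z (v^{1/k})'(z)$ being decreasing, a concavity-type condition on $v^{1/k}$ under logarithmic rescaling. This structure constrains how the mass of $M_2$ distributes across nested level sets and opens the door to a Muckenhoupt/Hardy-type estimate of the variance in terms of the Dirichlet form, or alternatively a direct comparison against a canonical family of test functions that saturate the inequality.

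The main obstacle is recovering the sharp constant $k+1$: Cheeger-type bounds typically lose a multiplicative factor and would yield only $O(1/k^2)$, so a direct variational argument seems necessary. A natural candidate for the extremal family is built from $v^{1/k}$ (or a monotone transform thereof); for such test functions, the Dirichlet form can be integrated against the explicit kernel derived above, and the monotonicity of $z v'(z)/v(z)^{1-1/k}$ is precisely what is needed to verify that the Rayleigh quotient $\mathcal{E}_{S^\ast}(f)/\|f - M_2 f\|_{M_2}^2$ is minimized with value $1/(k+1)$. Matching constants exactly, rather than up to a multiplicative factor, is the most delicate step and is where the full strength of the monotonicity assumption enters.
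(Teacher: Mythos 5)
The paper does not prove this proposition; it is quoted verbatim from \citet{natarovskii2021quantitative} (their Theorem 3.10 and Proposition 3.13), so there is no in-paper proof to compare against line by line. Your setup is nonetheless the standard (and, as far as the route goes, the correct) one: the duality $\|S\|_{M_1}=\|S^{\ast}\|_{M_2}$ holds because $S=PP^{\ast}$ and $S^{\ast}=P^{\ast}P$ for the adjoint pair of conditional-expectation operators, and your kernel computation is right --- since the level sets are nested, $G_z\cap G_{z'}=G_{z\vee z'}$, and a layer-cake/integration-by-parts argument does give $v(z)\,s^{\ast}(z,z')=-\int_{z\vee z'}^{\|\tilde m\|_{\infty}} v'(s)/s\,\df s$, which is exactly the representation on which the Roberts--Rosenthal and Natarovskii--Rudolf analyses of the simple slice sampler are built. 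The observation that the hypothesis is equivalent to $z\,(v^{1/k})'(z)$ being decreasing is also correct.

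The genuine gap is that the step which actually produces the constant $1/(k+1)$ is never carried out, and the strategy you sketch for it would not suffice. To lower-bound the spectral gap you must prove $\mathcal{E}_{S^{\ast}}(f)\ge \tfrac{1}{k+1}\,\|f-M_2f\|_{M_2}^2$ for \emph{every} $f\in L^2(M_2)$; checking that a candidate extremal family built from $v^{1/k}$ attains Rayleigh quotient $1/(k+1)$ only bounds the gap from \emph{above} (it exhibits a near-minimizer), and says nothing about the infimum over all test functions unless you independently prove that this family is extremal --- which is the whole content of the theorem. Likewise, the Muckenhoupt/Hardy route you mention determines the optimal one-dimensional Poincar\'e constant only up to a universal multiplicative factor (classically a factor of $4$), so it cannot deliver the sharp value $k+1$; you acknowledge this for Cheeger but the same loss afflicts Muckenhoupt. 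The actual argument in \citet{natarovskii2021quantitative} requires a carefully engineered pointwise/weighted Cauchy--Schwarz estimate on the explicit kernel, in which the monotonicity of $z\,v'(z)/v(z)^{1-1/k}$ enters to control a specific ratio of weighted integrals uniformly in $z$; none of that machinery is present in your sketch. As written, the proposal correctly reduces the problem and identifies where the hypothesis must be used, but the reduction is the easy half, and the inequality that constitutes the theorem is asserted rather than proved.
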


	Unfortunately, sampling from $\Pi_{1,z}$, the uniform distribution on $G_z$, may be difficult in practice.
	Instead, one can simulate one iteration of the chain associated with an Mtk $Q_{1,z}$, where $\Pi_{1,z}$ is reversible with respect to $Q_{1,z}$.
	Call the Mtk of the resulting hybrid data augmentation sampler~$\hat{S}$.
	Then, by Theorem \ref{thm:deterministic-2}, we have the following result.
	
	\begin{proposition} \label{pro:slice}
		Let $\gamma: (0, \|\tilde{m}\|_{\infty} ) \to [0,1]$ be a measurable function such that $\|Q_{1,z}\|_{\Pi_{1,z}} \leq \gamma(z)$ for $M_2$-a.e. $z \in (0, \|\tilde{m}\|_{\infty} )$.
		Let~$t$ be some positive integer, and let $\alpha_t \in [0,1]$ be such that
		\[
		\frac{1}{\tilde{m}(y)} \int_0^{\tilde{m}(y)} \gamma(z)^t \, \df z \leq \alpha_t
		\]
		for $M_1$-a.e. $y \in \X_1$.
		If either $t$ is even, or $Q_{1,z}$ is positive semi-definite for $M_2$-a.e. $z \in (0, \|\tilde{m}\|_{\infty} )$, then, for $f \in L_0^2(M_1) \setminus \{0\}$,
		\[
		\frac{\langle f, \hat{S} f \rangle_{M_1}^t}{\|f\|_{M_1}^{2t}} \leq \frac{\langle f, S f \rangle_{M_1}}{\|f\|_{M_1}^2} + \alpha_t.
		\]
	\end{proposition}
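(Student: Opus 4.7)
The plan is to recognize that Proposition \ref{pro:slice} is essentially an instantiation of Theorem \ref{thm:deterministic-2} in the specific case where the hybrid data augmentation chain coincides with the hybrid slice sampler. The only nontrivial piece is to translate the abstract hypothesis on $\int_{\X_2} \gamma(z)^t\,\Pi_{2,y}(\df z)$ into the concrete condition involving $\tilde{m}$ stated in the proposition.

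First, I would verify that the slice sampler fits the framework of Section \ref{sec:da}. The state space decomposes as $\X_1 \times \X_2$ with $\X_2 = (0,\|\tilde{m}\|_{\infty})$; $\Pi$ is uniform on $G$; the conditional $\Pi_{2,y}$ is uniform on $(0,\tilde{m}(y))$; and $\Pi_{1,z}$ is uniform on the level set $G_z$. The mixed kernel $\hat{S}(y,\df y') = \int_{\X_2}\Pi_{2,y}(\df z)\,Q_{1,z}(y,\df y')$ is therefore exactly the hybrid data augmentation chain studied in Section \ref{sec:da}.

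Next, I would compute the relevant integral under $\Pi_{2,y}$. Since $\Pi_{2,y}$ is uniform on $(0,\tilde{m}(y))$, for any measurable $\gamma:\X_2\to[0,1]$,
\[
\int_{\X_2}\gamma(z)^t\,\Pi_{2,y}(\df z) \;=\; \frac{1}{\tilde{m}(y)}\int_0^{\tilde{m}(y)}\gamma(z)^t\,\df z.
\]
Hence the hypothesis of Proposition \ref{pro:slice} is precisely the hypothesis of Theorem \ref{thm:deterministic-2} with $\alpha_t$ as given. The parity/positive-semidefiniteness conditions transfer verbatim.

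Finally, I would invoke Theorem \ref{thm:deterministic-2} to conclude that
\[
\frac{\langle f,\hat{S}f\rangle_{M_1}^t}{\|f\|_{M_1}^{2t}} \;\leq\; \frac{\langle f,Sf\rangle_{M_1}}{\|f\|_{M_1}^2} + \alpha_t
\]
for every $f\in L_0^2(M_1)\setminus\{0\}$, which is exactly the desired inequality. There is no real obstacle here beyond bookkeeping: the proposition is essentially a corollary of Theorem \ref{thm:deterministic-2} specialized to the slice sampler, and the main content of the statement is the explicit identification of $\alpha_t$ with the averaged integral of $\gamma^t$ over the one-dimensional slice $(0,\tilde{m}(y))$.
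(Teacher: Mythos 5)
Your proposal is correct and matches the paper exactly: the paper states Proposition \ref{pro:slice} as an immediate consequence of Theorem \ref{thm:deterministic-2}, with the only substantive observation being that $\Pi_{2,y}$ is uniform on $(0,\tilde{m}(y))$, so $\int_{\X_2}\gamma(z)^t\,\Pi_{2,y}(\df z)=\tilde{m}(y)^{-1}\int_0^{\tilde{m}(y)}\gamma(z)^t\,\df z$. Nothing further is needed.
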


	In particular, by Corollary \ref{cor:deterministic-2}, if $Q_{1,z}$ is almost surely positive semi-definite,
	\begin{equation} \label{ine:slice-ours}
		\frac{1 - \|S\|_{M_1} - \alpha_t}{t} \leq 1 - \|\hat{S}\|_{M_1} \leq 1 - \|S\|_{M_1}.
	\end{equation}
	(The upper bound follows from Proposition \ref{pro:andrieu-0}.)
	
	The above display recovers the main technical result of \cite{latuszynski2014convergence}.
	When $Q_{1,z}$ is almost surely positive semi-definite, Theorem~8 of \cite{latuszynski2014convergence} states the following:
	For each positive integer~$t$,
	\begin{equation} \label{ine:slice-previous}
		\frac{1 - \|S\|_{M_1} - \beta_t}{t} \leq 1 - \|\hat{S}\|_{M_1} \leq 1 - \|S\|_{M_1},
	\end{equation}
	where $\beta_t \in [0,1]$ satisfies
	\[
	\left[ \frac{1}{\tilde{m}(y)} \int_0^{\tilde{m}(y)}  \gamma(z)^{2t}  \, \df z \right]^{1/2} \leq \beta_t \; \; \text{for } M_1 \text{-a.e. } y.
	\]
	By Jensen's inequality, one may always take $\alpha_t \leq \beta_t$, so the lower bound in~\eqref{ine:slice-ours} is slightly sharper than that in~\eqref{ine:slice-previous}.
	
	{
		
		To ensure that the lower bound in \eqref{ine:slice-ours} is nontrivial for some~$t$ whenever $\|S\|_{M_1} < 1$, it suffices to have $\alpha_t \to 0$ as $t \to \infty$.
		The following result, which is proved in Appendix \ref{app:latu}, is an easy extension of Lemma 10 of \cite{latuszynski2014convergence}.
		\begin{proposition} \label{pro:latu}
			Suppose that, in Proposition \ref{pro:slice}, $\gamma(z) < 1$ almost everywhere on the bounded interval $(0,\|\tilde{m}\|_{\infty})$, and that $\gamma(z) \leq \gamma_0$ when $z \in (0,\epsilon]$ for some $\gamma_0 < 1$ and $\epsilon > 0$.
			Then, taking
			\[
			\alpha_t = \sup_{m \in (0, \|\tilde{m}\|_{\infty})} \frac{1}{m} \int_0^m \gamma(z)^t \, \df z, \quad
			\beta_t = \sup_{m \in (0, \|\tilde{m}\|_{\infty})}  \left[ \frac{1}{m} \int_0^m  \gamma(z)^{2t}  \, \df z \right]^{1/2}, 
			\]
			we have $\alpha_t \to 0$ and $\beta_t \to 0$ as $t \to \infty$.
		\end{proposition}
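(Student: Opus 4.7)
The plan is to split the interval of integration at the threshold $\epsilon > 0$ on which we have the uniform bound $\gamma \le \gamma_0 < 1$, and separately consider small and large values of $m$. For $m \in (0, \epsilon]$, the bound $\gamma(z) \le \gamma_0$ applies on the entire range of integration, so $\frac{1}{m} \int_0^m \gamma(z)^t \, \df z \le \gamma_0^t$, and this tends to $0$ uniformly in $m$. For $m \in (\epsilon, \|\tilde{m}\|_{\infty})$, I would write
\[
\frac{1}{m} \int_0^m \gamma(z)^t \, \df z \;=\; \frac{1}{m} \int_0^{\epsilon} \gamma(z)^t \, \df z \;+\; \frac{1}{m} \int_{\epsilon}^m \gamma(z)^t \, \df z \;\le\; \gamma_0^t \;+\; \frac{1}{\epsilon} \int_{\epsilon}^{\|\tilde{m}\|_{\infty}} \gamma(z)^t \, \df z.
\]

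The first term again vanishes as $t \to \infty$. The second term is where I would invoke the dominated convergence theorem: since $\gamma(z) < 1$ almost everywhere on $(0, \|\tilde{m}\|_{\infty})$, we have $\gamma(z)^t \to 0$ almost everywhere, and $\gamma(z)^t \le 1$ provides an integrable dominator on the bounded interval $(\epsilon, \|\tilde{m}\|_{\infty})$. Hence $\int_{\epsilon}^{\|\tilde{m}\|_{\infty}} \gamma(z)^t \, \df z \to 0$. Combining the two cases yields $\alpha_t \to 0$.

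For $\beta_t$, the same decomposition applied to $\gamma(z)^{2t}$ in place of $\gamma(z)^t$ gives $\frac{1}{m} \int_0^m \gamma(z)^{2t} \, \df z \to 0$ uniformly in $m$, and taking square roots preserves the convergence to $0$. The whole argument is elementary and mirrors the original reasoning in Lemma 10 of \cite{latuszynski2014convergence}; the only small point to be careful about is that the supremum over $m$ must be controlled uniformly, which is precisely why the split at $\epsilon$ is needed (so that the factor $1/m$ in front of the large-$z$ integral is bounded by $1/\epsilon$). I do not anticipate any substantive obstacle beyond ensuring that the boundedness of $(0, \|\tilde{m}\|_{\infty})$ is used to justify the applicability of dominated convergence.
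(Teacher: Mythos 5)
Your proof is correct and rests on the same ingredients as the paper's: the split at the threshold $\epsilon$, the uniform geometric bound $\gamma_0^t$ near zero, and dominated convergence on the bounded interval away from zero. The paper organizes this as a proof by contradiction via subsequences (and handles only $\beta_t$, noting $\alpha_t \le \beta_t$ by Jensen), whereas you give the equivalent direct uniform estimate, which is if anything slightly cleaner.
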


		A concrete choice for $Q_{1,z}$ is the Mtk of the hit-and-run sampler \citep{smith1984efficient} targeting the uniform distribution $\Pi_{1,z}$.
		In the hit-and-run sampler, starting from the current state $y \in \X_1$, a line passing through $y$ is selected uniformly at random; the next state is then chosen uniformly from the points where this line intersects $G_z$.
		The resultant $Q_{1,z}$ is positive semi-definite \citep{rudolf2013positivity}.
		There have been plenty of studies on the convergence properties of hit-and-run samplers \citep{diaconis1998hit,lovasz1999hit,lovasz2004hit}.
		For example, Theorem 4.2 of \cite{lovasz2004hit} along with Cheeger's inequality \cite{lawler1988bounds} give the following.
		\begin{proposition} \label{pro:lovasz}
			Let $z \in (0,\|\tilde{m}\|_{\infty})$.
			Suppose that $G_z$ is a convex subset of $\mathbb{R}^d$.
			Suppose further that $G_z$ contains a ball with radius $r_z$, while being contained in a ball with radius $R_z$, where $0 < r_z \leq R_z < \infty$.
			Then, when $Q_{1,z}$ is the Mtk of the hit-and-run sampler targeting $\Pi_{1,z}$,
			\[
			1 - \|Q_{1,z}\|_{\Pi_{1,z}} \geq \frac{C_1 r_z^2}{d^2 R_z^2 },
			\]
			where $C_1$ is a positive universal constant.
		\end{proposition}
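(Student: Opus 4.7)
The proof of Proposition \ref{pro:lovasz} is essentially a two-step invocation of known results, and the plan is to make this chain of implications explicit. The key quantity to introduce is the conductance (Cheeger constant) of the hit-and-run chain on $G_z$, defined by
\[
\Phi(Q_{1,z}) = \inf_{A \in \B(G_z), \, 0 < \Pi_{1,z}(A) \leq 1/2} \frac{1}{\Pi_{1,z}(A)} \int_A Q_{1,z}(y, G_z \setminus A) \, \Pi_{1,z}(\df y),
\]
which serves as the intermediary between the geometric data $(r_z, R_z, d)$ and the $L^2$ spectral quantity $\|Q_{1,z}\|_{\Pi_{1,z}}$.

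First, I would cite Theorem~4.2 of \cite{lovasz2004hit}, which asserts that for a convex body $K \subset \mathbb{R}^d$ containing a ball of radius $r$ and contained in a ball of radius $R$, the hit-and-run sampler targeting the uniform distribution on $K$ satisfies a conductance bound of the form $\Phi \geq C \, r / (d \, R)$ for some universal constant $C > 0$. Applied to our setting with $K = G_z$, this yields
\[
\Phi(Q_{1,z}) \geq \frac{C \, r_z}{d \, R_z}.
\]
Verifying that Lovász's hypotheses apply reduces to noting that $G_z$ is convex by assumption and that the hit-and-run kernel is indeed reversible with respect to $\Pi_{1,z}$, which is standard \citep[see, e.g.,][]{smith1984efficient, rudolf2013positivity}.

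Second, I would invoke Cheeger's inequality for reversible Markov chains, due to \cite{lawler1988bounds}: for any reversible Mtk $K$ on a probability space $(\Y,\F,\omega)$,
\[
1 - \|K\|_{\omega} \geq \frac{\Phi(K)^2}{2}.
\]
Combining the two bounds,
\[
1 - \|Q_{1,z}\|_{\Pi_{1,z}} \;\geq\; \frac{\Phi(Q_{1,z})^2}{2} \;\geq\; \frac{C^2}{2} \cdot \frac{r_z^2}{d^2 R_z^2},
\]
which is the desired inequality with $C_1 = C^2 / 2$.

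Since both ingredients are off-the-shelf, there is no real obstacle in the derivation; the only mild subtlety is that Lovász's conductance bound is stated in terms of the $L^1$ (isoperimetric) formulation of the Cheeger constant, so one should make sure the form of Cheeger's inequality being applied is the one tailored to reversible Markov operators on $L_0^2(\Pi_{1,z})$ as in \cite{lawler1988bounds}. Once this bookkeeping is done, the proof is immediate.
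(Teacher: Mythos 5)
Your proposal is correct and follows exactly the route the paper takes: the paper derives this proposition by combining Theorem~4.2 of Lov\'{a}sz and Vempala (the conductance lower bound $\Phi \gtrsim r_z/(d R_z)$ for hit-and-run on a convex body) with Cheeger's inequality from Lawler and Sokal, which is precisely your two-step argument. You have merely made explicit the bookkeeping that the paper leaves implicit.
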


		Assuming that $z \mapsto r_z/R_z$ is measurable in Proposition \ref{pro:lovasz}, we can take $\gamma(z) = 1 - (C_1/d^2)(r_z/R_z)^2$.
		By Proposition \ref{pro:latu}, in this case, to ensure that $\alpha_t$ converges to~0, it suffices to have $r_z/R_z$ being bounded away from 0 when $z \to 0$.
		That is, when $z \approx 0$, the level set $G_z$ is not excessively stretched and elongated.

		\begin{remark}
			When the level sets are not convex, there are other spectral gap bounds for the hit-and-run algorithm that allows for non-trivial choices of $\gamma(z)$. 
			See, e.g., Proposition~1 of \cite{latuszynski2014convergence}.
		\end{remark}
		
		For a quick example, let $\X_1 = \{y \in (0,1)^d: \, \sum_{i=1}^d y_i < 1 \}$, where $d \geq 2$.
		Let
		\[
		\tilde{m}(y) = \frac{1 - \sum_{i=1}^d y_i}{1 - y_d}.
		\]
		Then $\X_2 = (0,1)$, and, for $z \in (0,1)$, $G_z = \{ y \in (0,1)^d: \, (1-z)^{-1} \sum_{i=1}^{d-1} y_i + y_d < 1 \}$.
		Hence, $v(z) = (1-z)^{d-1}/d!$.
		It is straightforward to verify that $v'(z) < 0$, and $z \mapsto z v'(z)/v(z)^{1-1/k}$ is decreasing when $k = d-1$.
		So by Proposition \ref{pro:slice}, $1 - \|S\|_{M_1} \geq 1/d$.
		The convex level set $G_z$ contains the ball centered at $((1-z)/(2d), \dots, (1-z)/(2d))$ of radius $(1-z)/(2d)$, while being contained in the ball centered at the origin of radius~1.
		Then, by Proposition \ref{pro:lovasz}, when $Q_{1,z}$ corresponds to the hit-and-run algorithm, one may take $\gamma(z) = 1 - (C_1/4)(1-z)^2/d^4$, where $C_1$ is a positive universal constant.
		Note that $\gamma(z)$ is not uniformly bounded away from~1 when $z \in (0,1)$, so the lower bound for $1 - \|\hat{S}\|_{M_1}$ in Proposition \ref{pro:andrieu-0} does not apply here.
		Instead, in Proposition \ref{pro:slice}, we may take 
		\[
		\alpha_t = \sup_{m \in (0, 1)} \frac{1}{m} \int_0^m \gamma(z)^t \, \df z = \int_0^1 \left[ 1 - \frac{C_1 (1-z)^2}{4 d^4} \right]^t \, \df z,
		\]
		where the second equality holds since $\gamma(z)$ is increasing.
		By the dominated convergence theorem, when $t = d^{4+\epsilon}$, where $\epsilon$ is a positive constant, $d \alpha_t \to 0$ as $d \to \infty$.
		Therefore, if $t = d^{4+\epsilon}$, $1 - \|S\|_{M_1} - \alpha_t \geq 1/(2d)$ when $d$ is sufficiently large.
		By the bound \eqref{ine:slice-ours}, as $d \to \infty$,
		\[
		1 - \|\hat{S}\|_{M_1} \geq 2^{-1} d^{-5 - \epsilon}.
		\]
		We note that the same asymptotic result can also be obtained through the bound \eqref{ine:slice-previous}.
		
		After the initial release of our work, \citet{power2024weak} further extended the results in this section and \cite{latuszynski2014convergence} regarding hybrid slice samplers.
		
	}

	\section{Discussion} \label{sec:discussion}
	
	To obtain a non-trivial lower bound on $1 - \|\hat{T}\|_{\Pi}$ using Corollary~\ref{cor:random}, it is imperative that $\|Q_{i,y}\|_{\Pi_{i,y}} \leq C$ for all values of $i$ and $y$, where $C$ is some constant that is strictly less than one.
	Relaxing this constraint emerges as a clear avenue for prospective research.
	Techniques based on weak Poincar\'{e} inequalities or the $s$-conductance may be useful; see \citep{andrieu2022comparison} and \citep{ascolani2024scalability}.
	
	The comparative outcomes in Corollaries \ref{cor:random} and \ref{cor:deterministic-2} are useful only if the exact Gibbs sampler in question admits a non-vanishing spectral gap.
	However, even in the absence of a positive spectral gap, the comparison results based on Dirichlet forms, as expounded in Theorems \ref{thm:random} and \ref{thm:deterministic-2}, may lead to valuable insights into the convergence behavior of the hybrid samplers \citep{andrieu2022comparison}.
	Additionally, exploring the comparison in terms of the conductance \citep{lawler1988bounds}, the $s$-conductance \citep{lovasz1993random}, or the approximate spectral gap \citep{atchade2021approximate} could yield further interesting results.
	See, e.g., \cite{ascolani2024scalability}.
	
	{
		An interesting aspect of random-scan Gibbs samplers is the influence of the selection probability	 vector $p = (p_1, \dots, p_n)$.
		Let $p$ and $p' = (p_1', \dots, p'_n)$ be two vectors such that $p_i > 0$, $p_i' > 0$ for each~$i$, and $\sum_{i=1}^n p_i = \sum_{i=1}^n p_i' = 1$.
		Let $T(p)$ and $\hat{T}(p)$ be Mtks defined as \eqref{eq:T} and \eqref{eq:That}, respectively.
		Let $T(p')$ and $\hat{T}(p')$ be defined analogously, with $p_i$ replaced by $p_i'$ for each~$i$.
		By Theorem~1 of \cite{jones2014convergence}, $1 - \|T(p)\|_{\Pi} \geq (\min_i p_i/p_i') (1 - \|T(p')\|_{\Pi})$, and $1 - \|\hat{T}(p)\|_{\Pi} \geq (\min_i p_i/p_i') (1 - \|\hat{T}(p')\|_{\Pi})$.
		Related to this, one reviewer asked the following intriguing question:
		Assuming that $1 - \|T(p)\|_{\Pi} \geq 1 - \|T(p') \|_{\Pi}$, does it follow that $1 - \|\hat{T}(p)\|_{\Pi} \geq 1 - \|\hat{T}(p') \|_{\Pi}$?
		Although we were unable to prove or disprove this conjecture, Corollary~\ref{cor:random} provides a straightforward approach to establish a weaker, but related, result:
		\begin{proposition}
			Suppose that there exists a positive number $b$ such that $1 - \|T(p)\|_{\Pi} \geq b(1 - \|T(p') \|_{\Pi})$.
			Suppose further that there exists a constant $C \in [0,1]$ such that $\|Q_{i,y}\|_{\Pi_{i,y}} \leq C$ for $i = 1,\dots,n$ and $M_{-i}$-a.e. $y \in \X_{-i}$.
			Then
			\[
			1 - \|\hat{T}(p)\|_{\Pi} \geq \frac{b (1-C)}{1+C} (1 - \|\hat{T}(p')\|_{\Pi}).
			\]
			If, furthermore, $Q_{i,y}$ is positive semi-definite for $i = 1,\dots,n$ and $M_{-i}$-a.e. $y \in \X_{-i}$, then
			\[
			1 - \|\hat{T}(p)\|_{\Pi} \geq b(1-C) (1 - \|\hat{T}(p')\|_{\Pi}).
			\]
		\end{proposition}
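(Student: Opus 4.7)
The plan is to chain together two applications of Corollary~\ref{cor:random}, one for the probability vector $p$ and one for $p'$, and then insert the assumed spectral gap comparison $1 - \|T(p)\|_{\Pi} \geq b(1 - \|T(p')\|_{\Pi})$ in between. Conceptually, the lower bound in Corollary~\ref{cor:random} lets us pass from $\hat{T}(p)$ to $T(p)$, the hypothesis lets us pass from $T(p)$ to $T(p')$, and the upper bound in Corollary~\ref{cor:random} lets us pass from $T(p')$ back to $\hat{T}(p')$.

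More precisely, first I would apply the lower bound half of Corollary~\ref{cor:random} to $\hat{T}(p)$ to obtain
\[
1 - \|\hat{T}(p)\|_{\Pi} \;\geq\; (1-C)\bigl(1 - \|T(p)\|_{\Pi}\bigr).
\]
Next, I would use the hypothesis to get $(1-C)(1 - \|T(p)\|_{\Pi}) \geq b(1-C)(1 - \|T(p')\|_{\Pi})$. Finally, I would apply the upper bound half of Corollary~\ref{cor:random} to $\hat{T}(p')$, which gives $1 - \|\hat{T}(p')\|_{\Pi} \leq (1+C)(1 - \|T(p')\|_{\Pi})$, equivalently $1 - \|T(p')\|_{\Pi} \geq (1+C)^{-1}(1 - \|\hat{T}(p')\|_{\Pi})$. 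Combining the three inequalities yields
\[
1 - \|\hat{T}(p)\|_{\Pi} \;\geq\; \frac{b(1-C)}{1+C}\bigl(1 - \|\hat{T}(p')\|_{\Pi}\bigr),
\]
which is the first claim.

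For the second claim, I would repeat the argument but use the sharper upper bound $1 - \|\hat{T}(p')\|_{\Pi} \leq 1 - \|T(p')\|_{\Pi}$ from Corollary~\ref{cor:random}, which is available when each $Q_{i,y}$ is positive semi-definite. This replaces the factor $(1+C)^{-1}$ by $1$ in the final step, giving $1 - \|\hat{T}(p)\|_{\Pi} \geq b(1-C)(1 - \|\hat{T}(p')\|_{\Pi})$.

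There is no real obstacle here: the proposition is a direct corollary of Corollary~\ref{cor:random} once one recognizes that the lower and upper bounds there are exactly what is needed to sandwich the hypothesis on the exact chains. The only small point to verify is that Corollary~\ref{cor:random} applies uniformly to the kernels $\hat{T}(p)$ and $\hat{T}(p')$, which is immediate since the hypothesis $\|Q_{i,y}\|_{\Pi_{i,y}} \leq C$ does not depend on the selection probability vector.
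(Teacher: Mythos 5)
Your proof is correct and is exactly the argument the paper intends: the proposition is presented as a direct consequence of Corollary~\ref{cor:random}, obtained by sandwiching the hypothesis $1 - \|T(p)\|_{\Pi} \geq b(1 - \|T(p')\|_{\Pi})$ between the lower bound applied to $\hat{T}(p)$ and the upper bound applied to $\hat{T}(p')$ (with the tighter upper bound in the positive semi-definite case). No gaps.
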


		It is possible to present the results herein in a more general manner at the price of notational complications only.
		Let $\X$ be a generic Polish space and let $\B$ be its Borel algebra.
		Let $\Y_1, \dots, \Y_n$ be a sequence of Polish spaces, and, for $i = 1,\dots,n$, let $\psi_i: \X \to \Y_i$ be a measurable function.
		Then, if $X \sim \Pi$, one may define the conditional distribution of $X$ given $\psi_i(X) = y$ for $y \in \Y_i$ through disintegration \citep{chang1997conditioning}.
		Denote this conditional distribution by $\Pi^{\star}_{i,y}(\cdot)$.
		Assume that given $y \in \Y_i$, there is an Mtk $Q^{\star}_{i,y}: \X \times \B \to [0,1]$ that is reversible with respect to $\Pi_{i,y}^{\star}$.
		Under measurability conditions, we may define the Mtks
		\[
		T^{\star}(x, \df x') = \sum_{i=1}^n p_i \, \Pi_{i,\psi_i(x)}^{\star}(\df x'), \quad \hat{T}^{\star}(x, \df x') = \sum_{i=1}^n p_i \, Q_{i, \psi_i(x)}^{\star}(x, \df x'),
		\]
		where $p_1, \dots, p_n$ are non-negative constants satisfying $\sum_{i=1}^n p_i = 1$.
		Both Mtks are reversible with respect to $\Pi$.
		Note that if $\X$ has the product form $\X= \prod_{i=1}^n \X_i$, and $\psi_i(x) = x_{-i}$ for $i \in \{1,\dots,n\}$ and $x = (x_1, \dots, x_n) \in \X$, then $T^{\star}$ and $\hat{T}^{\star}$ reduce to $T$ and $\hat{T}$, respectively.
		Following the proof of Theorem \ref{thm:random}, one may establish that theorem in a broader context, with $\Pi_{i,y}$ replaced by $\Pi_{i,y}^{\star}$, $Q_{i,y}$ replaced by $Q_{i,y}^{\star}$, $T$ replaced by $T^{\star}$, and $\hat{T}$ replaced by $\hat{T}^{\star}$.
		
	}

	%%%%%%%%%%%%%%%%%%%%%%%%%%%%%%%%%%%%%%%%%%%%%%
	%% Single Appendix:                         %%
	%%%%%%%%%%%%%%%%%%%%%%%%%%%%%%%%%%%%%%%%%%%%%%
	%\begin{appendix}
	%\section*{???}%% if no title is needed, leave empty \section*{}.
	%\end{appendix}
	%%%%%%%%%%%%%%%%%%%%%%%%%%%%%%%%%%%%%%%%%%%%%%
	%% Multiple Appendixes:                     %%
	%%%%%%%%%%%%%%%%%%%%%%%%%%%%%%%%%%%%%%%%%%%%%%
	\begin{appendix}
		
		\section{A Technical Lemma} \label{sec:tec}

		\begin{lemma} \label{lem:Kt}
			Let $K$ be a bounded self-adjoint operator on a Hilbert space $H$ equipped with inner-product $\langle \cdot, \cdot \rangle$ and norm $\|\cdot\|$.
			Then, for any even positive integer~$t$ and $f \in H \setminus \{0\}$,
			\[
			0 \leq \left( \frac{ \langle  f, K f \rangle }{\|f\|^2} \right)^t \leq \frac{ \langle f, K^t f \rangle }{\|f\|^2}.
			\]
			If $K$ is positive semi-definite, then the inequalities also hold for odd positive integers.
		\end{lemma}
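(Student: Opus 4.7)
The plan is to invoke the spectral theorem for bounded self-adjoint operators on a Hilbert space and then reduce the claim to a one-line application of Jensen's inequality. Specifically, since $K$ is bounded and self-adjoint, there exists a projection-valued resolution of the identity $E$ supported on the spectrum $\sigma(K) \subseteq [-\|K\|, \|K\|]$ such that $\langle f, K^s f \rangle = \int_{\sigma(K)} \lambda^s \, d\mu_f(\lambda)$ for every nonnegative integer $s$ and every $f \in H$, where $\mu_f(A) := \langle f, E(A) f \rangle$ is a finite positive Borel measure with total mass $\mu_f(\sigma(K)) = \|f\|^2$.

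Given $f \in H \setminus \{0\}$, normalize to obtain the probability measure $\tilde\mu_f := \mu_f / \|f\|^2$ on $\sigma(K)$. Then
\[
\frac{\langle f, K f \rangle}{\|f\|^2} = \int_{\sigma(K)} \lambda \, d\tilde\mu_f(\lambda), \qquad \frac{\langle f, K^t f \rangle}{\|f\|^2} = \int_{\sigma(K)} \lambda^t \, d\tilde\mu_f(\lambda).
\]
For even $t$, the map $\lambda \mapsto \lambda^t$ is convex on all of $\mathbb{R}$ and nonnegative, so Jensen's inequality applied to $\tilde\mu_f$ yields both the upper bound $(\int \lambda \, d\tilde\mu_f)^t \leq \int \lambda^t \, d\tilde\mu_f$ and, since the integrand in the normalization of the right-hand side is nonnegative, the trivial lower bound $0 \leq (\int \lambda \, d\tilde\mu_f)^t$.

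When $K$ is positive semi-definite, $\sigma(K) \subseteq [0, \infty)$, so $\tilde\mu_f$ is supported in $[0, \infty)$. On $[0, \infty)$, the function $\lambda \mapsto \lambda^t$ is convex for every positive integer $t$ (odd or even), and the same Jensen argument applies; moreover $\int \lambda \, d\tilde\mu_f \geq 0$, so its $t$-th power is nonnegative. This yields the desired bounds for odd $t$ in the PSD case.

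There is no real obstacle here; the only thing worth emphasizing is that the convexity of $\lambda \mapsto \lambda^t$ on the support of $\tilde\mu_f$ is precisely what distinguishes the two regimes, and this is exactly why the odd-$t$ case requires positive semi-definiteness. (An alternative route for the case $t = 2^k$ is iterated Cauchy--Schwarz, $\langle f, Kf\rangle^2 \leq \|f\|^2 \langle f, K^2 f\rangle$, but it does not cover general even $t$ as cleanly as the spectral approach.)
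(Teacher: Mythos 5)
Your proposal is correct and follows essentially the same route as the paper: both invoke the spectral theorem to write $\langle f, K^s f\rangle/\|f\|^2$ as the $s$-th moment of the normalized spectral measure $\nu_f(\cdot)=\langle f, E_K(\cdot)f\rangle/\|f\|^2$, then apply Jensen's inequality using convexity of $\lambda\mapsto\lambda^t$ on the support of $\nu_f$ (all of $\mathbb{R}$ for even $t$, and $[0,\infty)$ in the positive semi-definite case for odd $t$). No gaps.
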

		
		\begin{proof}
			Fix $f \in H \setminus \{0\}$.
			Denote by $E_K(\cdot)$ the spectral measure of~$K$ \citep[see, e.g.,][Section 2.7]{arveson2002short}.
			Note that $\langle f, E_K(\cdot) f \rangle$ is a measure on~$\mathbb{R}$, and $\langle  f, E_K(\mathbb{R}) f \rangle = \|f\|^2$.
			Thus,
			\[
			\nu_f(\cdot) = \frac{ \langle f, E_K(\cdot) f \rangle}{ \|f\|^2 }
			\]
			is a probability measure on~$\mathbb{R}$.
			Assume that~$t$ is even or~$K$ is positive semi-definite.
			Then the function $\lambda \mapsto \lambda^t$ is convex on the support of $\nu_f$.
			By the spectral decomposition of~$K$ and Jensen's inequality, 
			\[
			\begin{aligned}
				\left( \frac{ \langle  f, K f \rangle }{\|f\|^2} \right)^t &= \left( \frac{  \int_{\mathbb{R}} \lambda \langle  f, E_K(\df \lambda) f \rangle }{\|f\|^2} \right)^t \\
				&= \left( \int_{\mathbb{R}} \lambda \nu_f (\df \lambda)  \right)^t \\
				&\leq \int_{\mathbb{R}} \lambda^t \nu_f (\df \lambda) \\
				&= \frac{  \langle  f, K^t f \rangle }{\|f\|^2}.
			\end{aligned}
			\]
		\end{proof}

		\section{Proof of Theorem~\ref{thm:deterministic-2}} \label{app:deterministic-2}
		
		Before proceeding to the proof, we first give a useful representation of the operator~$\hat{S}$ using ideas from \cite{latuszynski2014convergence}.

		Define a linear operator $Q: L_0^2(\Pi) \to L_0^2(\Pi)$ as follows:
		for $f \in L_0^2(\Pi)$, 
		\[
		Q f (x) = \int_{\X} Q_{1,x_2}(x_1, \df x_1') \, \delta_{x_2}(\df x_2') \, f(x'), \quad x \in \X.
		\]
		Recall that $Q_{1,z}$ is reversible with respect to $\Pi_{1,z}$ for $z \in \X_2$.
		The following is easy to verify:
		\begin{lemma} \label{lem:Q2positive}
			The operator $Q$ is self-adjoint.
			If $Q_{1,z}$ is positive semi-definite for $M_2$-a.e. $z \in \X_2$, then $Q$ is positive semi-definite.
		\end{lemma}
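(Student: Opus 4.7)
The plan is to verify both assertions by reducing them, via the disintegration $\Pi(\df x) = M_2(\df x_2)\,\Pi_{1,x_2}(\df x_1)$, to the already-known properties of the conditional Markov kernels $Q_{1,x_2}$ acting on $L^2(\Pi_{1,x_2})$. Measurability of every integrand below follows from condition \ref{A1}, so the main work is algebraic.

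For self-adjointness, I would fix $f, g \in L_0^2(\Pi)$ and expand $\langle Qf, g\rangle_\Pi$ using the disintegration. The resulting expression contains the joint measure $\Pi_{1,x_2}(\df x_1)\,Q_{1,x_2}(x_1,\df x_1')$ on $\X_1^2$, which is symmetric in $(x_1,x_1')$ because $Q_{1,x_2}$ is $\Pi_{1,x_2}$-reversible. Swapping the dummy variables $x_1 \leftrightarrow x_1'$ turns the integrand into $f(x_1,x_2)\,g(x_1',x_2)$, which after reassembling the outer integrals equals $\langle f, Qg\rangle_\Pi$. The Dirac factor $\delta_{x_2}(\df x_2')$ plays no role beyond forcing the second coordinate to be preserved, so there is nothing to check on the $\X_2$-side.

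For positive semi-definiteness under the assumption that $Q_{1,z}$ is PSD for $M_2$-a.e.\ $z$, the key step is a slicewise analysis. Writing $f_{x_2}(x_1) = f(x_1,x_2)$, the same disintegration yields
\[
\langle f, Qf\rangle_\Pi = \int_{\X_2} M_2(\df x_2)\, \langle f_{x_2}, Q_{1,x_2} f_{x_2}\rangle_{\Pi_{1,x_2}}.
\]
One cannot apply the PSD hypothesis directly because $f_{x_2}$ need not lie in $L_0^2(\Pi_{1,x_2})$, so I would split $f_{x_2} = g_{x_2} + c_{x_2}$, where $c_{x_2} = \Pi_{1,x_2} f_{x_2}$ and $g_{x_2} := f_{x_2} - c_{x_2} \in L_0^2(\Pi_{1,x_2})$. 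Since $Q_{1,x_2}$ preserves constants and admits $\Pi_{1,x_2}$ as a stationary distribution, the two cross terms vanish and the integrand simplifies to $\langle g_{x_2}, Q_{1,x_2} g_{x_2}\rangle_{\Pi_{1,x_2}} + c_{x_2}^2$, which is non-negative by assumption. Integrating in $x_2$ gives the claim.

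Neither step presents a genuine obstacle; the only nuisance is bookkeeping, namely ensuring that $f_{x_2} \in L^2(\Pi_{1,x_2})$ for $M_2$-a.e.\ $x_2$ and that $x_2 \mapsto \langle f_{x_2}, Q_{1,x_2} f_{x_2}\rangle_{\Pi_{1,x_2}}$ is measurable, both of which follow from Fubini's theorem together with \ref{A1}. No tools beyond disintegration, reversibility of $Q_{1,z}$, and the constant-plus-mean-zero splitting are required.
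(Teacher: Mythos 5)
Your proof is correct, and since the paper leaves this lemma as "easy to verify" without supplying an argument, your slicewise disintegration — symmetry of $\Pi_{1,x_2}(\df x_1)\,Q_{1,x_2}(x_1,\df x_1')$ for self-adjointness, and the splitting $f_{x_2} = (f_{x_2} - \Pi_{1,x_2}f_{x_2}) + \Pi_{1,x_2}f_{x_2}$ to handle the fact that the slices are not mean-zero — is exactly the intended verification. In particular you correctly identified the one non-trivial point, namely that the PSD hypothesis only applies on $L_0^2(\Pi_{1,x_2})$, and resolved it properly via the vanishing cross terms.
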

		
		Define a linear transformation $V: L_0^2(M_1) \to L_0^2(\Pi)$ as follows:
		for $f \in L_0^2(M_1)$, 
		\[
		V f(x) = f(x_1), \quad x \in \X.
		\]
		Define the linear transformation $V^*: L_0^2(\Pi)  \to L_0^2(M_1)$ as follows:
		for $f \in L_0^2(\Pi)$,
		\[
		V^* f (y) = \int_{\X_2} f(y, z) \, \Pi_{2,y}( \df z), \quad y \in \X_1.
		\]
		Then $V^*$ is the adjoint of~$V$ since, for $f \in L_0^2(\Pi)$ and $g \in L_0^2(M_1)$,
		\[
		\begin{aligned}
			\langle f, V g \rangle_{\Pi} &= \int_{\X} f(x) \, g(x_1) \, \Pi(\df x) \\
			&= \int_{\X_1} \int_{\X_2} f(x) \, g(x_1) \, \Pi_{2,x_1}( \df x_2) \, M_1(\df x_1) \\
			&= \int_{\X_1} g(x_1) \int_{\X_2} f(x) \, \Pi_{2,x_1}( \df x_2) \, M_1(\df x_1) \\
			&= \langle V^* f, g \rangle_{M_1}.
		\end{aligned}
		\]

		The following is easy to derive.
		\begin{lemma} \label{lem:St}
			The Mtk on $\X_1 \times \B_1$ given by
			\begin{equation} \label{eq:St}
				\hat{S}_t(y, \df y') = \int_{\X_2} \Pi_{2,y}(\df z) \, Q_{1,z}^t(y, \df y')
			\end{equation}
			corresponds to the self-adjoint operator $V^* Q^t V$ on $L_0^2(M_1)$ for any positive integer~$t$.
			In particular, $\hat{S} = \hat{S}_1 = V^* Q V$.
		\end{lemma}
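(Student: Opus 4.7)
The plan is to prove the lemma in two parts: first identify the operator $V^* Q^t V$ with the kernel $\hat{S}_t$ by computing its action on an arbitrary $g \in L_0^2(M_1)$, and then deduce self-adjointness from the self-adjointness of $Q$.

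The main step is to compute $Q^t(Vg)$. The key observation is that in the definition of $Q$, the factor $\delta_{x_2}(\df x_2')$ leaves the second coordinate fixed, so $Q$ behaves on $\X_1 \times \{x_2\}$ exactly like the kernel $Q_{1,x_2}$ on $\X_1$. For $t = 1$, unfolding the definition gives
\[
Q(Vg)(x) = \int_{\X_1} Q_{1,x_2}(x_1, \df x_1') \, g(x_1'),
\]
since $Vg$ depends only on the first coordinate. Although $Q(Vg)$ is no longer of the form $Vh$, it still only involves an integral against $Q_{1,x_2}(x_1, \cdot)$ in the first coordinate with $x_2$ fixed. This structure is preserved under further applications of $Q$, so an induction on $t$ yields
\[
Q^t(Vg)(x) = \int_{\X_1} Q_{1,x_2}^t(x_1, \df x_1') \, g(x_1').
\]
Applying $V^*$ to this and recalling its definition produces
\[
V^* Q^t V g(y) = \int_{\X_2} \Pi_{2,y}(\df z) \int_{\X_1} Q_{1,z}^t(y, \df y') \, g(y'),
\]
which matches the action of $\hat{S}_t$ as defined in \eqref{eq:St}.

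Self-adjointness is then immediate from what is already established. By Lemma~\ref{lem:Q2positive}, $Q$ is self-adjoint on $L_0^2(\Pi)$, hence $(Q^t)^* = Q^t$. Since $V^*$ was verified above to be the adjoint of $V$, taking adjoints of $V^* Q^t V$ yields the same operator, so $V^* Q^t V$ is self-adjoint on $L_0^2(M_1)$. The case $t = 1$ is just a specialization.

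The only mild subtlety is in the inductive step, which amounts to checking that the recursion for $Q^t(Vg)$ closes in terms of the $t$-step kernels $Q_{1,x_2}^t$; this reduces to the Chapman--Kolmogorov relation for $Q_{1,z}$ once one uses that $Q$ does not alter $x_2$. I do not anticipate any substantial obstacle beyond careful bookkeeping, and the measurability conditions needed to interchange the integrals are standard under \ref{A1}.
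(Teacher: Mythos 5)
Your proof is correct and fills in exactly the computation the paper leaves implicit when it calls this lemma "easy to derive": unfold $Q^t V g$ using the fact that $Q$ fixes the second coordinate, invoke Chapman--Kolmogorov for $Q_{1,z}$, apply $V^*$, and get self-adjointness from $Q^* = Q$ (Lemma~\ref{lem:Q2positive}) together with the adjoint relation between $V$ and $V^*$ established just before the lemma. No gaps.
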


		We are now ready to prove Theorem~\ref{thm:deterministic-2}, which is restated below.
		
		\vspace{0.2cm}
		{ \sc Theorem~\ref{thm:deterministic-2}.}
		{\it
			Let $\gamma: \X_2 \to [0,1]$ be a measurable function such that $\|Q_{1,z}\|_{\Pi_{1,z}} \leq \gamma(z)$ for $M_2$-a.e. $z \in \X_2$.
			Let~$t$ be some positive integer, and let $\alpha_t \in [0,1]$ be such that
			\begin{equation} \label{ine:alphat}
				\int_{\X_2} \gamma(z)^t \, \Pi_{2,y}( \df z) \leq \alpha_t
			\end{equation}
			for $M_1$-a.e. $y \in \X_1$.
			If either (i) $t$ is even, or (ii) $Q_{1,z}$ is positive semi-definite for $M_2$-a.e. $z \in \X_2$, then, for $f \in L_0^2(M_1) \setminus \{0\}$,
			\begin{equation} \tag{\ref{ine:Shat-Dir-2}}
				0 \leq \frac{\langle f, \hat{S} f \rangle_{M_1}^t}{\|f\|_{M_1}^{2t}} \leq \frac{\langle f, S f \rangle_{M_1}}{\|f\|_{M_1}^2} + \alpha_t.
			\end{equation}
		}
		\begin{proof}
			Let~$t$ be a positive integer for which~\eqref{ine:alphat} holds, and let $f \in L_0^2(M_1)$ be nonzero.
			Let $\hat{S}_t$ be defined as in~\eqref{eq:St}.
			Replacing $Q_{1,z}$ with $Q_{1,z}^t$ in~Lemma \ref{lem:andrieu-0} yields
			\[
			\begin{aligned}
				\mathcal{E}_{\hat{S}_t}( f) &= \int_{\X_2} M_2(\df z) \, \mathcal{E}_{Q_{1,z}^t}( f) \\
				&= \int_{\X_2} M_2(\df z) \, \mathcal{E}_{Q_{1,z}^t}( f - \Pi_{1,z} f) \\
				&\geq \mathcal{E}_{S}( f) - \int_{\X_2} M_2(\df z) \gamma(z)^t \|f - \Pi_{1,z} f\|_{\Pi_{1,z}}^2.
			\end{aligned}
			\]
			The inequality in the final line holds because, by self-adjoint-ness, $\|Q_{1,z}^t\|_{\Pi_{1,z}} = \|Q_{1,z}\|_{\Pi_{1,z}}^t \leq \gamma(z)^t$.
			Now,
			\[
			\begin{aligned}
				\int_{\X_2} M_2(\df z) \gamma(z)^t \|f - \Pi_{1,z} f\|_{\Pi_{1,z}}^2 &\leq \int_{\X_2} M_2(\df z) \gamma(z)^t \|f \|_{\Pi_{1,z}}^2 \\
				&= \int_{\X} \Pi(\df x) \gamma(x_2)^t f(x_1)^2 \\
				&= \int_{\X_1} M_1(\df x_1) f(x_1)^2 \int_{\X_2} \Pi_{2,x_1}(\df x_2) \gamma(x_2)^t \\
				&\leq \alpha_t \|f\|_{M_1}^2.
			\end{aligned}
			\]
			Combining terms, we have
			\[
			\mathcal{E}_{\hat{S}_t}( f) \geq \mathcal{E}_{S}( f) - \alpha_t \|f\|_{M_1}^2, 
			\]
			i.e.,
			\[
			\langle f, \hat{S}_t f \rangle_{M_1} \leq \langle f, S f \rangle_{M_1} + \alpha_t \|f\|_{M_1}^2.
			\]

			Assume that~$t$ is even, or that $Q_{1,z}$ is positive semi-definite for $M_2$-a.e. $z \in \X_2$.
			Then, by Lemma~\ref{lem:Q2positive}, $t$ is even, or $Q$ is positive semi-definite.
			Note also that, for $f, g \in L_0^2(M_1)$, $\langle f, g \rangle_{M_1} = \langle Vf, Vg \rangle_{\Pi}$.
			By Lemmas~\ref{lem:Kt} and~\ref{lem:St},
			\[
			\begin{aligned}
				0 \leq \left( \frac{\langle f, \hat{S} f \rangle_{M_1}}{\|f\|_{M_1}^2} \right)^t =  \left( \frac{\langle  V f, Q V f \rangle_{\Pi}}{\|V f\|_{\Pi}^2} \right)^t   \leq \frac{\langle V f, Q^t V f \rangle_{\Pi}}{\|V f\|_{\Pi}}   = \frac{\langle  f, \hat{S}_t f \rangle_{M_1}}{\|f\|_{M_1}^2}.
			\end{aligned}
			\]
			
			Combining the two most recent displays yields~\eqref{ine:Shat-Dir-2}.
		\end{proof}

		\section{Proof of Proposition \ref{pro:metro}} \label{app:metro}
		\begin{proof}
			Fix $z \in \{0,1\}^d$ and $q \in (0,1)$.
			For $\beta \in \X_1 = \mathbb{R}^d$, let $v_z(\beta) = (\beta_j/\sqrt{\tau(z_j)})_{j=1}^d$.
			Define the Mtk $\tilde{Q}_{1,(z,q)}: \X_1 \times \B_1 \to [0,1]$, where $\B_1$ is the Borel algebra of $\mathbb{R}^d$, as follows: for $\alpha \in \X_1$ and $A \in \B_1$, let
			\[
			\tilde{Q}_{1,(z,q)}(\alpha, A) = Q_{1,(z,q)}(v_z^{-1}(\alpha), v_z^{-1}(A)).
			\]
			Then $\tilde{Q}_{1,(z,q)}$ defines a self-adjoint operator on $L_0^2(\Pi_{1,(z,q)} \circ v_z^{-1})$.
			% and $\tilde{Q}_{1,(z,q)} = V_q^{-1} Q_{1,(z,q)} V_z$, where $V_z$ maps $f \in L_0^2(\Pi_{1,(z,q)} \circ v_z^{-1})$ to $f \circ v_z \in L_0^2(\Pi_{1,(z,q)})$.
			One can verify that, for $g \in L_0^2(\Pi_{1,(z,q)}) \setminus \{0\}$, it holds that $g \circ v_z^{-1} \in L_0^2(\Pi_{1,(z,q)} \circ v_z^{-1}) \setminus \{0\}$, and
			\begin{equation} \label{eq:metro-1}
				\frac{ \mathcal{E}_{Q_{1,(z,q)}}(g) }{\|g\|_{\Pi_{1,(z,q)} }^2 } = \frac{ \mathcal{E}_{\tilde{Q}_{1,(z,q)}} (g \circ v_z^{-1})  }{\|g \circ v_z^{-1} \|_{\Pi_{1,(z,q)} \circ v_z^{-1} }^2 }.
			\end{equation}

			The Mtk $\tilde{Q}_{1,(z,q)}$ corresponds to the random-walk Metropolis algorithm targeting $\Pi_{1,(z,q)} \circ v_z^{-1}$ whose proposal distribution, given the current state $\alpha$, is $\mbox{N}_d(\alpha, \sigma_d^2 I_d)$.
			It can be checked that the log density of $\Pi_{1,(z,q)} \circ v_z^{-1}$ is 
			\[
			\alpha \mapsto \ell(\sqrt{\tau(z_1)}\alpha_1, \dots, \sqrt{\tau(z_d)}\alpha_d) - \sum_{j=1}^d \frac{\alpha_j^2}{2} + \mbox{constant}.
			\]
			Let $\tilde{H}(\alpha)$ be the Hessian of this function.
			Then it holds that 
			\[
			\tilde{H}(\alpha) = M_z^{1/2} H(v_z^{-1}(\alpha)) M_z^{1/2} - I_d,
			\]
			where $M_z$ is the $d \times d$ diagonal matrix whose $j$th diagonal element is $\tau(z_j)$.
			Hence, under Condition \ref{H1}, 
			\[
			I_d \preccurlyeq -\tilde{H}(\alpha) \preccurlyeq (b_d L_d + 1) I_d.
			\]
			Then, by Theorem 1 of \cite{andrieu2022explicit}, for $h \in L_0^2(\Pi_{1,(z,q)} \circ v_z^{-1})$,
			\[
			\frac{ \mathcal{E}_{\tilde{Q}_{1,(z,q)}}(h) }{\|h\|_{\Pi_{1,(z,q)} \circ v_z^{-1}}^2 } \geq \frac{C_0}{d(b_d L_d + 1)},
			\]
			where $C_0$ is a universal constant.
			Moreover, by Lemma 3.1 of \cite{baxendale2005renewal}, $\tilde{Q}_{1,(z,q)}$ is positive semi-definite, so, 
			for $h \in L_0^2(\Pi_{1,(z,q)} \circ v_z^{-1})$,
			\[
			\frac{ \mathcal{E}_{\tilde{Q}_{1,(z,q)}}(h) }{\|h\|_{\Pi_{1,(z,q)} \circ v_z^{-1}}^2 } \leq 1.
			\]
			The desired result then follows from \eqref{eq:metro-1}.
		\end{proof}
		
		\section{Proof of Proposition \ref{pro:latu}} \label{app:latu}
		
		\begin{proof}
			Since $0 \leq \alpha_t \leq \beta_t$, it suffices to show that $\beta_t \to 0$ as $t \to \infty$.
			
			Suppose the opposite.
			Then there exist a positive number $\beta_*$ and a strictly increasing sequence of positive integers $(t_j)_{j=1}^{\infty}$ such that, for $j = 1,2, \dots$,
			\[
			\sup_{m \in (0, \|\tilde{m}\|_{\infty})} \frac{1}{m} \int_0^m \gamma(z)^{2 t_j} \, \df z > \beta_*.
			\]
			Then, for each $j$, one can find $m_j \in (0, \|\tilde{m}\|_{\infty})$ such that
			\begin{equation} \label{ine:betastar}
				\frac{1}{m_j} \int_0^{m_j} \gamma(z)^{2 t_j} \, \df z > \beta_*.
			\end{equation}
			We now use an argument from \cite{latuszynski2014convergence}.
			By assumption, $m_j > \epsilon$ when $j$ is sufficiently large, for otherwise
			\[
			\liminf_{j \to \infty} \frac{1}{m_j} \int_0^{m_j} \gamma(z)^{2 t_j} \, \df z \leq \liminf_{j \to \infty} \gamma_0^{2 t_j} = 0,
			\]
			violating \eqref{ine:betastar}.
			Then, for large enough $j$, the function $z \mapsto (1/m_j) \gamma(z)^{2 t_j} \ind(z \in (0,m_j))$ is upper bounded by $z \mapsto 1/\epsilon$, which is integrable on $(0,\|\tilde{m}\|_{\infty})$.
			Therefore, by the dominated convergence theorem,
			\[
			\begin{aligned}
				\limsup_{j \to \infty} \frac{1}{m_j} \int_0^{m_j} \gamma(z)^{2 t_j} \, \df z &= \limsup_{j \to \infty}  \int_0^{\|\tilde{m}\|_{\infty}} \frac{\gamma(z)^{2 t_j} \ind(z \in (0, m_j))}{m_j} \, \df z \\
				&\leq \limsup_{j \to \infty}  \int_0^{\|\tilde{m}\|_{\infty}} \frac{\gamma(z)^{2 t_j} }{\epsilon} \, \df z \\
				&= \int_0^{\|\tilde{m}\|_{\infty}} \lim_{j \to \infty} \frac{\gamma(z)^{2 t_j} }{\epsilon} \, \df z = 0.
			\end{aligned}
			\]
			But this again violates \eqref{ine:betastar}.
			Thus, it must hold that $\lim_{t \to \infty} \beta_t = 0$.
		\end{proof}

	\end{appendix}
	
	%%%%%%%%%%%%%%%%%%%%%%%%%%%%%%%%%%%%%%%%%%%%%%
	%% Support information, if any,             %%
	%% should be provided in the                %%
	%% Acknowledgements section.                %%
	%%%%%%%%%%%%%%%%%%%%%%%%%%%%%%%%%%%%%%%%%%%%%%
	\begin{acks}[Acknowledgments]
		We thank an Editor, an Associate Editor and two Referees for helpful comments.
		One Referee provided valuable suggestions that helped streamline the proof of Theorem \ref{thm:random}.
		We gratefully acknowledge the 2023 IRSA Conference at the University of Minnesota for catalyzing the collaborative efforts among us.
	\end{acks}
	%%%%%%%%%%%%%%%%%%%%%%%%%%%%%%%%%%%%%%%%%%%%%%
	%% Funding information, if any,             %%
	%% should be provided in the                %%
	%% funding section.                         %%
	%%%%%%%%%%%%%%%%%%%%%%%%%%%%%%%%%%%%%%%%%%%%%%
	\begin{funding}
		The first author was partially supported by the National Science Foundation through grant DMS-2112887.
		The third author was partially supported by the National Science Foundation through grant DMS-2210849 and an Adobe Data Science Research Award.
	\end{funding}
	
	%%%%%%%%%%%%%%%%%%%%%%%%%%%%%%%%%%%%%%%%%%%%%%
	%% Supplementary Material, including data   %%
	%% sets and code, should be provided in     %%
	%% {supplement} environment with title      %%
	%% and short description. It cannot be      %%
	%% available exclusively as external link.  %%
	%% All Supplementary Material must be       %%
	%% available to the reader on Project       %%
	%% Euclid with the published article.       %%
	%%%%%%%%%%%%%%%%%%%%%%%%%%%%%%%%%%%%%%%%%%%%%%
	\begin{supplement} 
		\stitle{Supplement to "Spectral gap bounds for reversible hybrid Gibbs chains"}
		\sdescription{The supplementary file contains two sections.
			Section I provides an analysis of a Metropolis-within-proximal sampler based on Proposition \ref{pro:andrieu-0}, taken from \cite{andrieu2018uniform}.
			When the target distribution is well-conditioned, it is shown that the spectral gap of the hybrid proximal sampler is bounded from below by a multiple of $(m/L)(1/d)$, where $d$ is the dimension, and $L/m$ is the conditional number of the negative log of the target density.
			Section II provides an alternative quantitative relationship between $\|T\|_{\Pi}$ and $\|\hat{T}\|_{\Pi}$ based on \cite{roberts1997geometric}.
			It is then compared to the quantitative relation given in Corollary \ref{cor:random}.}
	\end{supplement}
	
	%%%%%%%%%%%%%%%%%%%%%%%%%%%%%%%%%%%%%%%%%%%%%%%%%%%%%%%%%%%%%
	%%                  The Bibliography                       %%
	%%                                                         %%
	%%  imsart-???.bst  will be used to                        %%
	%%  create a .BBL file for submission.                     %%
	%%                                                         %%
	%%  Note that the displayed Bibliography will not          %%
	%%  necessarily be rendered by Latex exactly as specified  %%
	%%  in the online Instructions for Authors.                %%
	%%                                                         %%
	%%  MR numbers will be added by VTeX.                      %%
	%%                                                         %%
	%%  Use \cite{...} to cite references in text.             %%
	%%                                                         %%
	%%%%%%%%%%%%%%%%%%%%%%%%%%%%%%%%%%%%%%%%%%%%%%%%%%%%%%%%%%%%%

	%% if your bibliography is in bibtex format, uncomment commands:
	\bibliographystyle{imsart-nameyear} % Style BST file (imsart-number.bst or imsart-nameyear.bst)
	\bibliography{qinbib}       % Bibliography file (usually '*.bib')
	
	%% or include bibliography directly:
	% \begin{thebibliography}{}
		% \bibitem{b1}
		% \end{thebibliography}
	
%	\includepdf[pages=-]{MwGibbs-supp.pdf}
	
	\newpage
	
	\begin{center}
		{\bf \Large Supplement to "Spectral gap bounds for reversible hybrid Gibbs chains"}
	\end{center}
	
	\vspace{1cm}
	
	\begin{center}
		\begin{minipage}{0.86\textwidth}
			{\small 
			This supplementary file contains two sections.
			
			Section \ref{sec:pro} provides an analysis of a Metropolis-within-proximal sampler based on Proposition \ref{pro:andrieu-0} of the main text, taken from \cite{andrieu2018uniform}.
			When the target distribution is well-conditioned, it is shown that the spectral gap of the hybrid proximal sampler is bounded from below by a multiple of $(m/L)(1/d)$, where $d$ is the dimension, and $L/m$ is the conditional number of the negative log of the target density.
			
			Section \ref{app:roberts} provides an alternative quantitative relationship between $\|T\|_{\Pi}$ and $\|\hat{T}\|_{\Pi}$, where $T$ and $\hat{T}$ are the exact and hybrid random-scan Gibbs kernels defined  in \eqref{eq:T} and \eqref{eq:That} of the main text.
			The relationship is based on arguments from \cite{roberts1997geometric}.
			It is then compared to the quantitative relation given in Corollary \ref{cor:random} of the main text.}
		\end{minipage}

	\end{center}
	
	\vspace{1cm}
	
	\section{A proximal sampler} \label{sec:pro}
	
	This section contains an application of Proposition \ref{pro:andrieu-0} from the main text, taken from \cite{andrieu2018uniform}.
	This application, which involves hybrid proximal samplers, may be interesting in its own right.
	
	Suppose that $\X_1 = \mathbb{R}^d$, where $d$ is a positive integer.
	Suppose that $M_1$, the distribution of interest, admits a density function that is proportional to $y \mapsto e^{-\xi(y)}$, where $\xi: \X_1 \to \mathbb{R}$ is twice differentiable.
	Assume that there exist constants $m \in \mathbb{R}$ and $L \in [0,\infty)$ such that, for $y, u \in \mathbb{R}^d$,
	\begin{equation} \nonumber
		\frac{m}{2} \|u\|^2 \leq \xi(y + u) - \xi(y) - \langle \nabla \xi(y), u \rangle \leq \frac{L}{2} \|u\|^2.
	\end{equation}

	Now, let $\X_2 = \mathbb{R}^d$.
	For $y \in \mathbb{R}^d$, let $\Pi_{2,y}$ be a Gaussian distribution whose density is
	\[
	z \mapsto (2 \pi \eta)^{-d/2} \exp \left( - \frac{\|z - y\|^2}{2 \eta} \right),
	\]
	where $\eta$ is a positive constant that can be tuned.
	Then the joint distribution~$\Pi$ has a density proportional to
	\[
	(y,z) \mapsto \exp \left[ - \xi(y) - \frac{\|z - y\|^2}{2 \eta}  \right].
	\]
	The measure~$M_2$ has a density that is proportional to the above formula with $y$ integrated out.
	For $M_2$-a.e. $z \in \mathbb{R}^d$, $\Pi_{1,z}$ has density function
	\[
	y \mapsto \left\{ \int_{\mathbb{R}^d} \exp \left[ - \xi(y') - \frac{\|z - y'\|^2}{2 \eta}  \right] \, \df y' \right\}^{-1} \exp \left[ - \xi(y) - \frac{\|z - y\|^2}{2 \eta}  \right]  .
	\]
	
	In this case, the data augmentation chain associated with the Markov transition kernel (Mtk)~$S$, where $S(y, \df y') = \int_{\mathbb{R}^d} \Pi_{2,y}(\df z) \, \Pi_{1,z}(\df y')$
	is called a proximal sampler \citep{lee2021structured}.
	It can be particularly useful when $M_1$ is ill-conditioned, although later on in our analysis of its hybrid version, we focus on the case where $M_1$ is well-conditioned for tractability.
	This sampler was studied by \citet{lee2021structuredarxiv} and \citet{chen2022improved}.
	%In particular, using a result in \cite{lee2021structuredarxiv}, one can establish the following result.
	%\begin{proposition} \label{pro:lee}
	%	Suppose that $m > 0$.
	%	Then $\|S\|_{M_1} \leq 1/(1+m \eta)$.
	%\end{proposition}
	%\begin{proof}
	%	In the proof of Lemma~2 of \cite{lee2021structuredarxiv}, it was shown that, for $y, y' \in \mathbb{R}^d$, the 2-Wasserstein distance between $S(y,\cdot)$ and $S(y',\cdot)$ is upper bounded by $\|y-y'\|/(1+ m \eta )$.
	%	Then the 1-Wasserstein distance between the two is also subject to this upper bound.
	%	Moreover,
	%	\[
	%	\frac{1}{2} \int_{\mathbb{R}^d}  \int_{\mathbb{R}^{2d}} \|y'-y''\|^2 \, S(y, \df y') \, S(y, \df y'') \, M_1(\df y) \leq 2 \int_{\mathbb{R}^d} \|y\|^2 \, M_1(\df y) < \infty.
	%	\]
	%	The desired result then follows from Lemma 30 of \cite{ollivier2009ricci}.
	%\end{proof}
	In particular, the following holds.
	\begin{proposition}\citep[][Theorem~4]{chen2022improved} \label{pro:lee}
		Suppose that $m > 0$.
		Then $\|S\|_{M_1} \leq 1/(1+m \eta)$.
	\end{proposition}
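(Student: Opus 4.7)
The plan is to leverage Lemma~\ref{lem:andrieu-0} together with two classical log-concavity facts. Since $S$ is positive semi-definite and reversible with respect to $M_1$, by \eqref{eq:norm-def} the target bound $\|S\|_{M_1} \leq 1/(1+m\eta)$ is equivalent to the Poincar\'{e}-type inequality $\mathcal{E}_S(f) \geq \frac{m\eta}{1+m\eta}\|f\|_{M_1}^2$ for every $f \in L_0^2(M_1)$. By Lemma~\ref{lem:andrieu-0} and the law of total variance, $\mathcal{E}_S(f) = \int_{\X_2} \|f - \Pi_{1,z} f\|_{\Pi_{1,z}}^2 \, M_2(\df z)$ and $\|f\|_{M_1}^2 = \mathcal{E}_S(f) + \|g\|_{M_2}^2$, where $g(z) := \Pi_{1,z} f$ lies in $L_0^2(M_2)$. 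So it suffices to prove the variance contraction
\[
\|g\|_{M_2}^2 \leq \frac{1}{m\eta}\, \mathcal{E}_S(f).
\]

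The crux of the argument is this contraction. For sufficiently regular $f$, I would differentiate under the integral: the density of $\Pi_{1,z}$ is proportional to $\exp[-\xi(y) - \|z-y\|^2/(2\eta)]$, and its log-gradient in $z$ equals $(y - \mu_z)/\eta$, where $\mu_z$ is the mean of $\Pi_{1,z}$. This gives $\nabla g(z) = \eta^{-1}\int_{\X_1}(y-\mu_z)[f(y) - g(z)]\,\Pi_{1,z}(\df y)$, and Cauchy-Schwarz yields
\[
\|\nabla g(z)\|^2 \leq \eta^{-2}\,\|f - \Pi_{1,z}f\|_{\Pi_{1,z}}^2 \cdot \lambda_{\max}\!\left(\mathrm{Cov}_{\Pi_{1,z}}(Y)\right),
\]
where $Y \sim \Pi_{1,z}$. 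Because the negative log-density of $\Pi_{1,z}$ has Hessian at least $(m+1/\eta) I_d$, the Brascamp-Lieb inequality bounds $\mathrm{Cov}_{\Pi_{1,z}}(Y) \preccurlyeq \frac{\eta}{1+m\eta} I_d$, hence $\|\nabla g(z)\|^2 \leq [\eta(1+m\eta)]^{-1}\|f - \Pi_{1,z} f\|_{\Pi_{1,z}}^2$.

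To close the argument, I would use the fact, a consequence of Pr\'{e}kopa-Leindler, that $M_2 = M_1 * N(0, \eta I_d)$ is $\frac{m}{1+m\eta}$-strongly log-concave and therefore satisfies the Poincar\'{e} inequality $\|g\|_{M_2}^2 \leq \frac{1+m\eta}{m}\int_{\X_2}\|\nabla g(z)\|^2 \, M_2(\df z)$. Integrating the preceding estimate against $M_2$ and chaining inequalities produces the desired $\|g\|_{M_2}^2 \leq \mathcal{E}_S(f)/(m\eta)$.

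The main obstacle is rigorously justifying differentiation under the integral and the Sobolev regularity of $g$ for arbitrary $f \in L_0^2(M_1)$. I would handle this via a density argument: first establish the bound for $f$ in a dense subspace of smooth, compactly supported functions, then pass to the limit using the continuity of both sides in $L^2(M_1)$. An alternative route, pursued by \citet{chen2022improved}, is a simultaneous-heat-flow interpolation that bypasses explicit gradients but relies on heavier probabilistic machinery.
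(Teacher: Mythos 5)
The paper offers no proof of this proposition; it is imported verbatim as Theorem~4 of \citet{chen2022improved}, whose argument runs a simultaneous heat-flow (diffusion interpolation) to show contraction of the chi-squared divergence along both the forward and backward steps of the proximal sampler. Your proof is a genuinely different, essentially self-contained route, and I believe it is correct. The skeleton is the standard two-block variance decomposition: by \eqref{eq:norm-def} and positive semi-definiteness of $S$ the claim reduces to $\mathcal{E}_S(f) \geq \frac{m\eta}{1+m\eta}\|f\|_{M_1}^2$, and Lemma~\ref{lem:andrieu-0} plus total variance give $\|f\|_{M_1}^2 = \mathcal{E}_S(f) + \|g\|_{M_2}^2$ with $g(z)=\Pi_{1,z}f$, so everything hinges on $\|g\|_{M_2}^2 \leq (m\eta)^{-1}\mathcal{E}_S(f)$. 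Your three ingredients for that step all check out: the covariance representation $\nabla g(z) = \eta^{-1}\,\mathrm{Cov}_{\Pi_{1,z}}(Y, f(Y))$ (the $z$-score of $\Pi_{1,z}$ is indeed $(y-\mu_z)/\eta$ after accounting for the normalizing constant), Brascamp--Lieb with the Hessian lower bound $(m+1/\eta)I_d$ giving $\mathrm{Cov}_{\Pi_{1,z}}(Y) \preccurlyeq \frac{\eta}{1+m\eta}I_d$, and the Poincar\'{e} inequality for $M_2 = M_1 * N(0,\eta I_d)$ with constant $\frac{1+m\eta}{m}$ from strong log-concavity of the convolution; the constants chain to exactly $(m\eta)^{-1}$. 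The regularity caveat is real but handled correctly: $\mathcal{E}_S(f)=\|f\|_{M_1}^2-\langle f, Sf\rangle_{M_1}$ is $L^2$-continuous, so proving the inequality on a dense class of smooth $f$ suffices. What each approach buys: yours yields the spectral-gap bound directly with only classical log-concavity tools and dovetails naturally with the Dirichlet-form framework of Sections~\ref{sec:da}; the heat-flow proof of \citet{chen2022improved} is heavier but extends to KL and R\'{e}nyi divergences and to targets satisfying only a functional inequality rather than strong log-concavity.
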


	In practice, the conditional distribution $\Pi_{1,z}(\cdot)$ may be difficult to sample from exactly.
	Let us now consider the hybrid version of the data augmentation chain whose Mtk has the form
	\[
	\hat{S}(y, \df y') = \int_{\mathbb{R}^d} \Pi_{2,y}(\df z) \, Q_{1,z}(y, \df y') = (2 \pi \eta)^{-d/2} \int_{\mathbb{R}^d}  \exp \left( - \frac{\|z - y\|^2}{2 \eta} \right) Q_{1,z}(y, \df y').
	\]
	The approximating kernel $Q_{1,z}$ is assumed to be reversible with respect to $\Pi_{1,z}$, and may correspond to a number of algorithms.
	For illustration, take $Q_{1,z}$ to be the Mtk of a random-walk Metropolis algorithm.
	To be specific, the new state is proposed via a $d$-dimensional normal distribution whose mean is the current state, and whose variance is $2^{-1} d^{-1} (L+1/\eta)^{-1} I_d$.
	The following lemma follows directly from Theorem~1 of \cite{andrieu2022explicit} and Lemma 3.1 of \cite{baxendale2005renewal}.
	
	\begin{lemma} \label{lem:andrieu}
		Suppose that $m + 1/\eta > 0$, and that $Q_{1,z}$ corresponds to the Metropolis algorithm above.
		Then, for $z \in \mathbb{R}^d$ and $g \in L_0^2(\Pi_{1,z}) \setminus \{0\}$,
		\[
		\frac{C_0 (m \eta + 1)}{d(L \eta + 1)} \leq \frac{\mathcal{E}_{Q_{1,z}}(g) }{ \|g\|_{\Pi_{1,z}}^2 } \leq 1.
		\]
		where $C_0$ is a positive universal constant.
	\end{lemma}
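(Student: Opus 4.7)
The plan is to reduce the lemma to a direct application of Theorem~1 of \cite{andrieu2022explicit} (for the lower bound) and Lemma~3.1 of \cite{baxendale2005renewal} (for the upper bound), essentially mirroring the argument used for Proposition~\ref{pro:metro} in Appendix~\ref{app:metro}, but without the need for a rescaling change of variables since the proposal covariance is already isotropic.

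First, I would identify the conditioning of the target $\Pi_{1,z}$. Its log density is, up to an additive constant,
\[
y \mapsto -\xi(y) - \frac{\|z-y\|^2}{2\eta},
\]
whose Hessian equals $-\nabla^2 \xi(y) - \eta^{-1} I_d$. Under the assumed bounds $m I_d \preccurlyeq \nabla^2 \xi(y) \preccurlyeq L I_d$, this Hessian satisfies
\[
-\left(L + \tfrac{1}{\eta}\right) I_d \;\preccurlyeq\; -\nabla^2 \xi(y) - \eta^{-1} I_d \;\preccurlyeq\; -\left(m + \tfrac{1}{\eta}\right) I_d,
\]
so $\Pi_{1,z}$ is $\tilde{m}$-strongly log-concave and $\tilde{L}$-log-smooth with $\tilde{m} = m + 1/\eta = (m\eta+1)/\eta$ and $\tilde{L} = L + 1/\eta = (L\eta+1)/\eta$. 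The assumption $m + 1/\eta > 0$ ensures $\tilde{m} > 0$.

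Next, I would note that the random-walk Metropolis kernel $Q_{1,z}$ uses isotropic Gaussian proposals with covariance $\sigma^2 I_d$ where $\sigma^2 = 1/[2d(L+1/\eta)] = 1/(2d\tilde{L})$. This is precisely the scaling covered by Theorem~1 of \cite{andrieu2022explicit}, which gives an explicit lower bound on the Dirichlet form ratio of the form $C_0 \tilde{m}/(d\tilde{L})$ for some positive universal constant $C_0$. Substituting the expressions for $\tilde{m}$ and $\tilde{L}$ cancels the factors of $\eta$ and yields $C_0(m\eta+1)/[d(L\eta+1)]$, which is the claimed lower bound. For the upper bound, Lemma~3.1 of \cite{baxendale2005renewal} asserts that any Metropolis--Hastings kernel with symmetric proposal (which is the case for the Gaussian random walk) is positive semi-definite on $L_0^2$ of its invariant distribution. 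Combined with $\|Q_{1,z}\|_{\Pi_{1,z}} \leq 1$, this yields $\mathcal{E}_{Q_{1,z}}(g)/\|g\|_{\Pi_{1,z}}^2 = 1 - \langle g, Q_{1,z} g \rangle_{\Pi_{1,z}}/\|g\|_{\Pi_{1,z}}^2 \in [0,1]$ for any $g \in L_0^2(\Pi_{1,z}) \setminus \{0\}$, giving the upper bound.

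There is no real obstacle here, since both ingredients are black-boxed and the argument is essentially a verification that the hypotheses of \cite{andrieu2022explicit} are met with the stated constants. The only bookkeeping point to watch is the bookkeeping of $\eta$: one should verify that the proposal variance $\sigma^2 = 1/[2d(L+1/\eta)]$ matches the optimal scaling prescribed by Theorem~1 of \cite{andrieu2022explicit} under the reparametrization $(\tilde{m}, \tilde{L}) = (m+1/\eta, L+1/\eta)$, and that the resulting Dirichlet form ratio is $\eta$-independent after simplification, as shown above.
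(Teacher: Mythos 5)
Your proposal is correct and follows exactly the route the paper intends: the paper states that the lemma "follows directly from Theorem~1 of \cite{andrieu2022explicit} and Lemma 3.1 of \cite{baxendale2005renewal}," and your verification that $\Pi_{1,z}$ is $(m+1/\eta)$-strongly log-concave and $(L+1/\eta)$-log-smooth with proposal variance $1/[2d(L+1/\eta)]$, together with positive semi-definiteness of the random-walk Metropolis kernel for the upper bound, is precisely the intended (and, in the analogous Proposition~\ref{pro:metro}, explicitly written) argument. No gaps.
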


	Proposition \ref{pro:andrieu-0} and Corollary \ref{cor:deterministic-2} (both from the main text) provide bounds on $1 - \|\hat{S}\|_{M_1}$ in terms of $1 - \|S\|_{M_1}$ and $\|Q_{1,z}\|_{\Pi_{1,z}}$.
	In particular, combining the bound in Proposition \ref{pro:andrieu-0} of the main text with Lemma~\ref{lem:andrieu} yields the following.
	\begin{proposition}\label{pro:proximal-lower}
		Suppose that $m + 1/\eta > 0$, and that $Q_{1,z}$ corresponds to the Metropolis algorithm above.
		Then
		\[
		\frac{C_0 (m \eta + 1)}{d (L\eta + 1)} (1 - \|S\|_{M_1}) \leq 1 - \|\hat{S}\|_{M_1} \leq 1 - \|S\|_{M_1},
		\]
		where $C_0$ is a universal constant.
		In particular, if $\eta = 1/L$,
		\[
		1 - \|\hat{S}\|_{M_1} \geq \frac{C_0}{2 d} (m/L + 1) (1 - \|S\|_{M_1}).
		\]
	\end{proposition}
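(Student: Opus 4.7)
The plan is to combine Lemma \ref{lem:andrieu} with Proposition \ref{pro:andrieu-0} in a direct bookkeeping argument. First I would observe that Lemma \ref{lem:andrieu} provides two-sided bounds on the Dirichlet ratio $\mathcal{E}_{Q_{1,z}}(g)/\|g\|_{\Pi_{1,z}}^2$: a lower bound of $c_1 := C_0(m\eta+1)/[d(L\eta+1)]$ and an upper bound of $c_2 := 1$. Since $c_2 \leq 1$, formula \eqref{eq:norm-def} from the preliminaries tells us that
\[
1 - \|Q_{1,z}\|_{\Pi_{1,z}} \;=\; \min\!\left\{2 - c_2,\, c_1\right\} \;=\; c_1,
\]
for $M_2$-a.e. $z \in \mathbb{R}^d$. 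Equivalently, $\|Q_{1,z}\|_{\Pi_{1,z}} \leq 1 - c_1 =: C$, and this bound is uniform in $z$.

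Next I would record the fact that each $Q_{1,z}$ is positive semi-definite, which follows from Lemma 3.1 of \cite{baxendale2005renewal} (the Metropolis kernel with Gaussian random-walk proposal targeting a density is a positive operator); this is the same input that yields the Dirichlet upper bound $c_2 = 1$ used in Lemma \ref{lem:andrieu}. With this property in hand, I would apply Proposition \ref{pro:andrieu-0} (the positive semi-definite version), which gives
\[
(1-C)\,(1 - \|S\|_{M_1}) \;\leq\; 1 - \|\hat{S}\|_{M_1} \;\leq\; 1 - \|S\|_{M_1}.
\]
Substituting $1 - C = c_1 = C_0(m\eta+1)/[d(L\eta+1)]$ yields the first displayed inequality of the proposition.

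For the second display, I would simply specialize to $\eta = 1/L$, so that $m\eta + 1 = m/L + 1$ and $L\eta + 1 = 2$. The coefficient then collapses to $C_0(m/L+1)/(2d)$, which produces the stated bound. There is essentially no obstacle here beyond verifying that the hypotheses of Proposition \ref{pro:andrieu-0} and Lemma \ref{lem:andrieu} are compatible: both require $m + 1/\eta > 0$ (equivalently, $m\eta + 1 > 0$), which is exactly the standing assumption of the proposition, and both assume the same specification of $Q_{1,z}$. The only subtle point worth double-checking is that positive semi-definiteness allows one to tighten the right-hand side of the Andrieu-type bound to $1 - \|S\|_{M_1}$, as this is where the upper bound in the conclusion comes from; everything else is direct substitution.
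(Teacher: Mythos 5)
Your proposal is correct and matches the paper's (very terse) proof, which likewise just combines Lemma \ref{lem:andrieu} with Proposition \ref{pro:andrieu-0}, using positive semi-definiteness of $Q_{1,z}$ (from Lemma 3.1 of \cite{baxendale2005renewal}) to tighten the upper bound to $1-\|S\|_{M_1}$. One cosmetic slip: from \eqref{eq:norm-def} the Dirichlet bounds give $1-\|Q_{1,z}\|_{\Pi_{1,z}} \geq \min\{2-c_2,\,c_1\} = c_1$ rather than equality, but the consequence you actually use, $\|Q_{1,z}\|_{\Pi_{1,z}} \leq 1-c_1$, is exactly right.
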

	
	Combining Proposition~\ref{pro:lee} with Proposition~\ref{pro:proximal-lower}, we immediately have:
	
	\begin{proposition}\label{pro:proximal-lower-optparam}
		Suppose that $m > 0$, $\eta = 1/L$, and that $Q_{1,z}$ corresponds to the Metropolis algorithm above.
		Then
		\[
		1 - \|\hat{S}\|_{M_1} \geq \frac{C_0}{2d} \frac{m}{L}.
		\]
	\end{proposition}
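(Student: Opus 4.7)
The plan is to combine the two cited propositions directly, with the choice $\eta = 1/L$ engineered so that a pleasant cancellation occurs. First I would apply Proposition~\ref{pro:lee}, which under $m > 0$ yields $\|S\|_{M_1} \leq 1/(1 + m\eta)$, hence
\[
1 - \|S\|_{M_1} \geq \frac{m\eta}{1 + m\eta}.
\]
Setting $\eta = 1/L$ specializes this to $1 - \|S\|_{M_1} \geq m/(L + m)$, i.e.\ a lower bound on the spectral gap of the exact proximal sampler in terms of the condition number $L/m$.

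Next I would invoke the second (more quantitative) half of Proposition~\ref{pro:proximal-lower}, which asserts that under the hypothesis $\eta = 1/L$ and with $Q_{1,z}$ being the specified random-walk Metropolis kernel,
\[
1 - \|\hat{S}\|_{M_1} \;\geq\; \frac{C_0}{2d}\,(m/L + 1)\,(1 - \|S\|_{M_1}).
\]
Chaining the two bounds gives
\[
1 - \|\hat{S}\|_{M_1} \;\geq\; \frac{C_0}{2d} \cdot \frac{m+L}{L} \cdot \frac{m}{m+L} \;=\; \frac{C_0}{2d}\,\frac{m}{L},
\]
where the telescoping of $(m+L)$ in numerator and denominator produces exactly the claimed bound. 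Note also that $\eta = 1/L$ automatically ensures $m + 1/\eta = m + L > 0$, so the hypothesis of Proposition~\ref{pro:proximal-lower} is satisfied.

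There is no real obstacle here; the entire content of the proposition is an algebraic consolidation of the two prior estimates under the optimizing choice $\eta = 1/L$. The only thing worth remarking on is the role of this choice: it is precisely what balances the two factors $(m\eta + 1)/(L\eta + 1)$ (from the Metropolis-within-proximal approximation quality in Lemma~\ref{lem:andrieu}) and $m\eta/(1 + m\eta)$ (from the exact proximal gap in Proposition~\ref{pro:lee}) so that their product simplifies to the dimension- and condition-number-scaling $m/(dL)$.
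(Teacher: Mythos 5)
Your proof is correct and is precisely the paper's argument: the paper states the result follows "immediately" by combining Proposition~\ref{pro:lee} with Proposition~\ref{pro:proximal-lower}, and your algebra chaining $1-\|S\|_{M_1}\geq m/(L+m)$ with the factor $(m/L+1)=(m+L)/L$ is exactly the intended cancellation.
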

	
	Proposition \ref{pro:proximal-lower-optparam} demonstrates that the hybrid data augmentation sampler achieves a spectral gap that is no smaller than a constant multiple of $(\kappa d)^{-1}$, where $\kappa = L/m$ is the conditional number of $\xi$. 
	We can readily extend these bounds to analyze the mixing time of the underlying Markov chain using conventional techniques \citep[see, e.g.,][Theorem 2.1]{roberts1997geometric}. Recall that an initial distribution $\pi_0$ is said to be  $\beta$-warm with respect to the stationary distribution~$M_1$ if $\pi_0(A)/M_1(A) \leq \beta$ for every measurable $A$ \citep{dwivedi2019log}. Assuming the hybrid sampler begins with a $\beta$-warm initialization, we can deduce from Theorem 2.1 of \cite{roberts1997geometric} that its $\epsilon$-mixing time, measured in both the total variation and $L^2$ distance, is on the order of $O(\kappa d \log\left(\beta/\epsilon\right))$ iterations.
	%(or $O(\kappa d^2 \log\left(\beta/\epsilon\right))$ in terms of computational cost, assuming each iteration has a cost of $\Theta(d)$). 
	This exhibits the same order of magnitude as the mixing time bound for the Metropolis adjusted Langevin Algorithm (MALA) in \cite{dwivedi2019log}.
	
	A frequently adopted initialization choice is $\pi_0 = N_d(y^\star, L^{-1} 
	I_d)$, where $y^\star$ is the unique maximizer of $y \mapsto e^{-\xi(y)}$ (often obtainable through optimization methods with minimal computational overhead). 
	This particular $\pi_0$ corresponds to $\beta = O(\kappa^{d/2})$, resulting in an $\epsilon$-mixing time of $O\left(\kappa d (d\log(\kappa) + \log(\epsilon^{-1}))\right)$.

	The above mixing time is of the same order as a method proposed by \citet{chen2022improved}, which implements the exact proximal sampler with $\eta \approx 1/(Ld)$ through a meticulous rejection sampling technique to draw from $\Pi_{1,z}(\cdot)$.
	(See page 7 of that work.)
	As noted in \cite{chen2022improved}, this cost is in some sense state-of-the-art. 
	It is worth mentioning that the hybrid sampler herein may be easier to implement, as the method in \cite{chen2022improved} requires accurately solving a $d$-dimensional convex optimization problem in every iteration.
	%It is worth mentioning that the hybrid sampler herein may offer ease of implementation advantages since the rejection sampler in \cite{chen2022improved} is intricate and relies on an optimization subroutine at each step. Moreover, the analysis of \cite{chen2022improved} assumes that the optimization subroutine consistently finds the accurate optimizer in every iteration.
	
	{
		\section{A bound from Roberts and Rosenthal (1997)} \label{app:roberts}
		
		As mentioned in Section \ref{sec:intro} of the main manuscript, \cite{roberts1997geometric} is a pioneering work on the convergence properties of hybrid Gibbs samplers.
		\citet{roberts1997geometric} did not provide explicit bounds relating $\|T\|_{\Pi}$ to $\|\hat{T}\|_{\Pi}$,  where $T$ and $\hat{T}$ are the exact and hybrid random-scan Gibbs kernels defined  in \eqref{eq:T} and \eqref{eq:That} of the main text. 
		However, it is possible to use their argument to obtain a quantitative relationship.
		In this section, we derive such a relation, and compare it to Corollary \ref{cor:random} of the main text.
		
		We first review the relevant result in \cite{roberts1997geometric}.
		We adopt the notations from Section \ref{ssec:samplers} of the main text.
		For $i = 1, \dots, n$, let $\Pi_i: L_0^2(\Pi) \to L_0^2(\Pi)$ be such that $\Pi_i f(x) = \int_{\X} \Pi_{i,x_{-i}}(\df x_i') \, \delta_{x_{-i}} (\df x_{-i}') \, f(x')$, and let $Q_i: L_0^2(\Pi) \to L_0^2(\Pi)$ be such that $Q_i f(x) = \int_{\X} Q_{i,x_{-i}}(x_i, \df x_i') \, \delta_{x_{-i}} (\df x_{-i}') \, f(x')$.
		Then, $T$ and $\hat{T}$, as defined in \eqref{eq:T} and \eqref{eq:That} of the main text, can be written as $T = \sum_{i=1}^n p_i \Pi_i$ and $\hat{T} = \sum_{i=1}^n p_i Q_i$, respectively.
		
		\begin{proposition}\citep[][Theorem 3.1]{roberts1997geometric} \label{pro:roberts}
			Suppose that $p_1 = \cdots = p_n = 1/n$.
			Suppose further that, for $i \in \{1, \dots, n\}$, $\|Q_i^t - \Pi_i\|_{\Pi} \to 0$ as $t \to \infty$.
			Then $\|\hat{T}\|_{\Pi} < 1$ whenever $\|T\|_{\Pi} < 1$.
		\end{proposition}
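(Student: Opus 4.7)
The plan is to argue by contradiction, exploiting the fact that each $\Pi_i$ is an orthogonal projection on $L_0^2(\Pi)$ and that the hypothesis $\|Q_i^t - \Pi_i\|_\Pi \to 0$ is equivalent to the strict norm bound $\|Q_i - \Pi_i\|_\Pi < 1$.

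First I would record the structural identities. Since $\Pi_i f(x) = \int_{\X_i} f(x_i', x_{-i})\,\Pi_{i,x_{-i}}(\df x_i')$ is a conditional expectation, $\Pi_i$ is an orthogonal projection on $L_0^2(\Pi)$, giving $\Pi_i^2 = \Pi_i$ and $\langle f, \Pi_i f\rangle_\Pi = \|\Pi_i f\|_\Pi^2$. Because $\Pi_i f$ does not depend on $x_i$ one has $Q_i \Pi_i = \Pi_i$, and because $\Pi_{i,x_{-i}}$ is stationary for $Q_{i,x_{-i}}$ one has $\Pi_i Q_i = \Pi_i$. Hence $(Q_i - \Pi_i)\Pi_i = \Pi_i(Q_i - \Pi_i) = 0$, which by induction collapses binomial expansions to
\[
(Q_i - \Pi_i)^t = Q_i^t - \Pi_i, \qquad t \geq 1.
\]
Self-adjointness of $Q_i - \Pi_i$ upgrades this to $\|Q_i - \Pi_i\|_\Pi^t = \|Q_i^t - \Pi_i\|_\Pi$, so the hypothesis yields $K_i := \|Q_i - \Pi_i\|_\Pi < 1$ for every $i$, and $K := \max_i K_i < 1$ because the index set is finite.

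Next, assume for contradiction that $\|\hat T\|_\Pi = 1$. By self-adjointness of $\hat T$ and \eqref{eq:norm-def}, there exists a sequence $f_k \in L_0^2(\Pi)$ with $\|f_k\|_\Pi = 1$ and $|\langle f_k, \hat T f_k\rangle_\Pi| \to 1$. Passing to a subsequence, I may assume $\langle f_k, \hat T f_k\rangle_\Pi \to L$ with $L \in \{-1, +1\}$. Since each $\langle f_k, Q_i f_k\rangle_\Pi$ lies in $[-1,1]$ and their uniform average tends to $L$, the elementary observation that an average of $n$ numbers in $[-1,1]$ can only tend to an extremum if each one does forces $\langle f_k, Q_i f_k\rangle_\Pi \to L$ individually for every $i$.

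Finally, I would convert this into $\|\Pi_i f_k\|_\Pi \to 1$ for each $i$ and extract the contradiction. The key inequality is $\|(Q_i - \Pi_i) f_k\|_\Pi^2 \leq K^2(1 - \|\Pi_i f_k\|_\Pi^2)$, obtained from $(Q_i - \Pi_i)(I - \Pi_i) = Q_i - \Pi_i$. In the case $L = +1$, expanding
\[
\langle f_k, Q_i f_k\rangle_\Pi = \|\Pi_i f_k\|_\Pi^2 + \langle f_k, (Q_i - \Pi_i) f_k\rangle_\Pi \leq K + (1-K)\|\Pi_i f_k\|_\Pi^2
\]
forces $\|\Pi_i f_k\|_\Pi^2 \to 1$. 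In the case $L = -1$, Cauchy-Schwarz gives $\|Q_i f_k\|_\Pi \to 1$, and using $Q_i^2 = \Pi_i + (Q_i - \Pi_i)^2$ together with the same inequality yields
\[
\|Q_i f_k\|_\Pi^2 = \|\Pi_i f_k\|_\Pi^2 + \|(Q_i - \Pi_i) f_k\|_\Pi^2 \leq K^2 + (1-K^2)\|\Pi_i f_k\|_\Pi^2,
\]
whence $\|\Pi_i f_k\|_\Pi^2 \to 1$ once more. Averaging gives $\langle f_k, T f_k\rangle_\Pi = (1/n)\sum_i \|\Pi_i f_k\|_\Pi^2 \to 1$, contradicting $\|T\|_\Pi < 1$. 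The only non-routine step is the collapse identity $(Q_i - \Pi_i)^t = Q_i^t - \Pi_i$, which is the bridge linking the hypothesis about $Q_i^t$ to the usable norm bound on $Q_i - \Pi_i$; once that identity and the projection property of $\Pi_i$ are in hand, the rest is bookkeeping.
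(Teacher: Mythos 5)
Your proof is correct, but it takes a genuinely different route from the one the paper sketches (following Roberts and Rosenthal). The paper expands $\hat{T}^t = n^{-t}\bigl(\sum_{i=1}^n Q_i\bigr)^t$, isolates the $n$ ``pure'' terms $n^{-t}\sum_i Q_i^t$, and absorbs the remaining mixed products into a single Markov kernel $R_t$ of norm at most one; the triangle inequality then gives the quantitative bound $1-\|\hat{T}\|_{\Pi} \geq n^{-(t-1)}(1-\|T\|_{\Pi}) - n^{-t}\sum_i \|Q_i^t-\Pi_i\|_{\Pi}$, and taking $t$ large finishes. You instead convert the hypothesis into the norm bound $K=\max_i\|Q_i-\Pi_i\|_{\Pi}<1$ via the collapse identity $(Q_i-\Pi_i)^t=Q_i^t-\Pi_i$ (all your algebraic identities $Q_i\Pi_i=\Pi_i Q_i=\Pi_i$ check out) and run a contradiction argument on quadratic forms, handling the two accumulation points $\pm1$ separately. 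Two remarks on the trade-off. First, the paper's route survives without self-adjointness of $Q_i$, which the original theorem did not assume; your passage from $\|Q_i^t-\Pi_i\|_{\Pi}\to0$ to $K<1$, and your use of \eqref{eq:norm-def}, both lean on reversibility of $Q_{i,y}$ — fine under this paper's standing assumptions, but worth flagging. Second, your contradiction framing actually undersells the argument: your inequality $\langle f,Q_i f\rangle_{\Pi}\leq K+(1-K)\|\Pi_i f\|_{\Pi}^2$, averaged over $i$ and combined with the easy lower bound $\langle f,\hat{T}f\rangle_{\Pi}\geq -K$, yields directly $1-\|\hat{T}\|_{\Pi}\geq(1-K)(1-\|T\|_{\Pi})$, i.e.\ essentially the bound of Corollary~\ref{cor:random} — much stronger than the exponentially-in-$t$ degraded bound \eqref{ine:roberts-bound-2} that the paper's decomposition produces. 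Stating that inequality outright would give a shorter, quantitative proof with no need for subsequence extraction.
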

		
		\begin{remark}
			The original theorem is stated for operator norms that satisfy a contracting property, which is satisfied by $\|\cdot\|_{\Pi}$.
			\citet{roberts1997geometric} also did not require that $Q_{i,y}$ must reversible with respect to $\Pi_{i,y}$ for each value of $(i,y)$, which is assumed herein.
		\end{remark}
		
		The original proof of Proposition \ref{pro:roberts} roughly goes as follows.
		We will keep track of intermediate parameters in the original proof to form a quantitative bound.
		Fix a positive integer~$t$.
		Then one may write $\hat{T}^t$ as 
		\[
		n^{-t} \left( \sum_{i=1}^n Q_i \right)^t  = n^{-(t-1)} n^{-1} \sum_{i=1}^n Q_i^t + \left( 1 - n^{-(t-1)} \right) R_{t}, 
		\]
		where $R_{t}$ is some Mtk satisfying $\Pi R_{t} = \Pi$.
		By the triangle inequality and the fact that $\|R_{t}\|_{\Pi} \leq 1$,
		\[
		\|\hat{T}\|_{\Pi} \leq n^{-(t-1)} \left\| n^{-1} \sum_{i=1}^n Q_i^t \right\|_{\Pi} + 1 - n^{-(t-1)} \leq n^{-(t-1)} \|T\|_{\Pi} + n^{-t} \sum_{i=1}^n C_{i,t} + 1 - n^{-(t-1)},
		\]
		where $C_{i,t} = \|Q_i^t - \Pi_i\|_{\Pi}$.
		We may then obtain the following quantitative relation,  which, while not explicitly stated in \cite{roberts1997geometric}, is implied in the derivations therein:
		\begin{equation} \label{ine:roberts-bound-1}
			1 - \|\hat{T}\|_{\Pi} \geq n^{-(t-1)} (1 - \|T\|_{\Pi}) - n^{-t} \sum_{i=1}^n C_{i,t}.
		\end{equation}
		Under the assumptions in Proposition \ref{pro:roberts}, one may find $t$ sufficiently large so that 
		\[
		n^{-t} \sum_{i=1}^n C_{i,t} < n^{-(t-1)} (1 - \|T\|_{\Pi}).
		\]
		At this point, Proposition \ref{pro:roberts} is immediate.
		
		%From the above proof, we get the following quantitative relation:
		%\begin{equation} \label{ine:roberts-bound-1}
		%1 - \|\hat{T}\|_{\Pi} \geq n^{-(t-1)} (1 - \|T\|_{\Pi}) - n^{-t} \sum_{i=1}^n C_{i,t}.
		%\end{equation}
		The quantity $C_{i,t}$ is slightly mysterious.
		Using the notation $f_{i,x_{-i}}(x_i) = f(x)$ in the proof of Theorem \ref{thm:random} from the main text, we may write it as follows:
		\[
		\begin{aligned}
			C_{i,t} &= \sqrt{\sup_{f \in L_0^2(\Pi) \setminus \{0\}} \frac{\int_{\X} [Q_i^t f(x) - \Pi_i f(x)]^2 \, \Pi(\df x) }{\|f\|_{\Pi}^2} } \\
			&= \sqrt{\sup_{f \in L_0^2(\Pi) \setminus \{0\}} \frac{\int_{\X_{-i}} \int_{\X_i} [Q_{i,x_{-i}}^t (f_{i,x_{-i}} - \Pi_{i,x_{-i}} f_{i,x_{-i}}) (x_i)]^2 \, \Pi_{i,x_{-i}}(\df x_i) \, M_{-i}(\df x_{-i}) }{\|f\|_{\Pi}^2} }.
		\end{aligned}
		\]
		Assume, as in Corollary \ref{cor:random} of the main text, that $\|Q_{i,y}\|_{\Pi_{i,y}} \leq C$ for some constant $C \in [0,1]$.
		Then
		\[
		\begin{aligned}
			C_{i,t} &\leq C^t \sqrt{\sup_{f \in L_0^2(\Pi) \setminus \{0\}} \frac{\int_{\X_{-i}} \int_{\X_i} [f_{i,x_{-i}}(x_i) - \Pi_{i,x_{-i}} f_{i,x_{-i}}]^2 \, \Pi_{i,x_{-i}}(\df x_i) \, M_{-i}(\df x_{-i}) }{\|f\|_{\Pi}^2} } \\
			&= C^t \sqrt{\sup_{f \in L_0^2(\Pi) \setminus \{0\}} \frac{\int_{\X} [f(x) - \Pi_i f(x)]^2 \, \Pi(\df x) }{\|f\|_{\Pi}^2} }.
		\end{aligned}
		\]
		The supremum is attained if $f$ is orthogonal to the range of $\Pi_i$, which is an orthogonal projection \citep{greenwood1998information}, in which case the last line becomes $C^t$.
		Hence, $C_{i,t} \leq C^t$, and, by \eqref{ine:roberts-bound-1},
		\begin{equation} \label{ine:roberts-bound-2}
			1 - \|\hat{T}\|_{\Pi} \geq n^{-(t-1)} (1 - \|T\|_{\Pi} - C^t).
		\end{equation}
		
		Routine calculations show that, when $n \geq 2$ and $t \geq 1$,
		\[
		n^{-(t-1)} (1 - \|T\|_{\Pi} - C^t) \leq n^{-(t-1)} (1-C^t)(1 - \|T\|_{\Pi}) \leq (1-C)(1-\|T\|_{\Pi}).
		\]
		Hence, \eqref{ine:roberts-bound-2} is looser than the lower bound in \eqref{ine:random-bound-1} from Corollary \ref{cor:random} of the main manuscript.
		For a rough quantitative comparison, fix $T$ and assume that $C = 1 - \varepsilon$ where $\varepsilon = o(1)$.
		Take $t = \lceil - \varepsilon^{-1} \log [(1-\|T\|_{\Pi})/2] + O(1)$.
		Then
		\[
		n^{-(t-1)} (1 - \|T\|_{\Pi} - C^t) = \Theta \left[ \left( \frac{1 - \|T\|_{\Pi}}{2} \right)^{(\log n)/\varepsilon} \right],
		\]
		where $\Theta$ means the left and righ hand sides are of the same order.
		Then the lower bound in \eqref{ine:roberts-bound-2} is exponential in $-1/\varepsilon$, while the lower bound in \eqref{ine:random-bound-1} of the main text is proportional to $\varepsilon$.
		In the example in Section \ref{sssec:generic} of the main text, $\varepsilon$ is inversely proportional to a polynomial of some dimension parameters.
		In that example, if $k = n$, $m_{i,y}/L_{i,y}$ is lower bounded by a positive constant $1/\kappa$, and $d_i = d$ for $i = 1,\dots,n$, then one may take $\varepsilon = C_0/(\kappa d)$.
		In this case, \eqref{ine:random-bound-1} gives the bound $1 - \|\hat{T}\|_{\Pi} \geq C_0 / (\kappa d) (1 - \|T\|_{\Pi}) $, while \eqref{ine:roberts-bound-2} gives $1 - \|\hat{T}\|_{\Pi} \geq c [(1-\|T\|_{\Pi})/2]^{(\log n)\kappa d / C_0} $ when $d$ is large, where $c$ is a positive constant.
		Evidently, the former is much tighter.
		
		It may be possible to improve the bound \eqref{ine:roberts-bound-2}, e.g., by bounding $C_{i,t}$ some other way. 
		We will not pursue this here.
		
	}

\end{document}